\newcommand{\RP}[1]{\mathbb{P}^{#1}_{\mathbb{R}}}
\newcommand{\mat}[2]{
  \left(
  \begin{array}{#1}
    #2
  \end{array}
  \right)
}
\newcommand{\Mat}[2]{
  \left[
  \begin{array}{#1}
    #2
  \end{array}
  \right]
}
\newcommand{\change}{
\begin{center}
 $\ast \ast \ast$
\end{center}
}
\newtheorem{prop}{Fun Fact}
\newtheorem{coro}{Corollary}
\newtheorem{question}{Question}
\newtheorem{conjecture}{Conjecture}
\title{A Multidimensional Gauss Map}
\author{Jes\'us Hern\'andez Serda \\ \texttt{jesus.hernandez@im.unam.mx}}
\date{October 31, 2016}
\begin{document}

\maketitle

\section{Introduction}

The continued fraction expansion gives a one-to-one correspondence between irrational numbers and sequences of natural numbers. 
This correspondence shows some properties of irrational numbers in terms of the sequences that represents them, for example, bounded sequences represent irrational numbers that satisfy a \emph{Diophantine} condition, and eventually periodic sequences represent exactly the quadratic algebraic irrational numbers. 
An age old question, the Hermite Problem, asks whether or not there exists a characterization of the sequences that represent algebraic numbers of higher degree or if there is another correspondence between irrational numbers and sequences of symbols for which periodic sequences represent algebraic numbers of a given degree. 

A dynamical way of approaching these questions is to study the dynamical systems with the real numbers as the base space which admit a conjugation to a symbolic system. Each system gives us a correspondence between a set of numbers and the sequences of symbols of the associated shift space. For example, the binary expansion of real numbers can be retrieved from the map $x \mapsto 2x \mod 1$. 
The classical Gauss Map is a piecewise continuous map from the unit interval to itself. From this map we retrieve the continued fraction expansion of irrational numbers and its dynamical properties give information about some arithmetic and algebraic properties of irrational numbers. In this notes we will explore some generalizations of the Gauss Map to higher dimensions and pose some questions and conjectures about the arithmetic/algebraic information that these maps may carry.

For the construction of this generalizations we will be using the projective spaces $\RP{n} = \mathbb{R}^{n+1} / \mathbb{R}^{\ast}$ and the homogeneous coordinate notation. 
A point $\mathbf{v} \in \RP{n}$ corresponds to a straight line going through the origin in $\mathbb{R}^{n+1}$ and it will be denoted as \[ \mathbf{v} = \Mat{c}{x_1 \\ \vdots \\ x_{n+1}}. \]
We will also be using \textemdash{}mostly in an implicit way\textemdash{} the canonical chart $\mathbb{R}^{n} \hookrightarrow \RP{n}$, given by \[ (x_1, ..., x_{n}) \mapsto \Mat{c}{x_1 \\ \vdots \\ x_{n} \\ 1}\] and the projection 
\[ \Mat{c}{x_1 \\ \vdots \\ x_{n+1}} \mapsto \left( \displaystyle\frac{x_1}{x_{n+1}}, ..., \displaystyle\frac{x_{n}}{x_{n+1}}\right) \hspace*{2em}\text{ if }\hspace*{2em} x_{n+1} \neq 0 \]

For the maps in this notes, a simplex will be taking the place of the unit interval. We will denote a $n$-dimensional simplex in $\RP{n}$ as a $(n+1) \times (n+1)$ matrix in brackets, with the column vectors of the matrix representing the vertices of the simplex in homogeneous coordinates.   

The maps we will consider are \emph{piecewise linear} in the sense that there is a countable partition of the base simplex in sub-simplexes and a matching family of $(n+1)\times(n+1)$ matrices. These maps act on a point by multiplying the matrix that corresponds to the sub-simplex containing the point. Note that these maps are linear in $\mathbb{R}^{n+1}$ but not necessarily when projected to $\RP{n}$. 

Since some of the properties we are studying are shared by whole orbits, we can consider the group generated by the defining matrices of one of these \emph{piecewise linear} maps and take orbits of the action of this group. Note that two points belong to the same orbit for the \emph{piecewise linear} map if there is an element of the associated group taking one into the other.

\section{Continued Fractions and the Gauss Map}

Consider the following matrices and simplex 
\[ A = \mat{cc}{1 & 0 \\ -1 & 1}, \hspace*{2em}
   B = \mat{cc}{-1 & 1 \\ 1 & 0}, \hspace*{2em}
   V = \Mat{cc}{0 & 1 \\ 1 & 1}.\]
Note that $A, B \in SL(2, \mathbb{Z})$ and the simplex represented by $V$ is the unit interval. The classic Farey Map $F: V \rightarrow V$ 
is given in homogeneous coordinates as 
\[ F (\mathbf{v}) = \left\lbrace \begin{array}{lcc} 
   A \mathbf{v} & \text{ if } & \mathbf{v} \in A^{-1}V \\ 
   B \mathbf{v} & \text{ if } & \mathbf{v} \in B^{-1}V \end{array} \right. ,\]
and when projected to $\mathbb{R}$ it takes the form 
\[ F(x) = \left\lbrace \begin{array}{lcc} 
        x/(1 - x) & \text{ if } & 0 \leq x \leq \frac{1}{2} \\ 
        (1 - x)/x & \text{ if } & \frac{1}{2} \leq x \leq 1 \end{array} \right. .\] 
Note that $V = A^{-1}V \cup B^{-1}V$ and this sub-simplexes are the intervals $[0, 1/2]$ and $[1/2, 1]$ respectively.

Consider the first return map to the simplex $B^{-1}V$: 
For a point $\mathbf{v} \in V$ let $k_{\mathbf{v}}$ be the least non-negative integer such that $F^{k_{\mathbf{v}}} (\mathbf{v}) \in B^{-1}V$. For example, if $\mathbf{v} \in B^{-1}V$ then $k_{\mathbf{v}} = 0$, on the other hand, if $\mathbf{v} \notin B^{-1}V$ then $k_B(\mathbf{v}) > 0$ and $F$ acts on $\mathbf{v}$ as multiplication by $A$. 
The first return map is defined as
\[ G (\mathbf{v}) = \left\lbrace \begin{array}{lcc} 
   B A^{k_{\mathbf{v}}} \mathbf{v} & \text{ if } & \mathbf{v} \neq \Mat{c}{0 \\ 1} \\ 
   \Mat{c}{0 \\ 1} & \text{ if } & \mathbf{v} = \Mat{c}{0 \\ 1} \end{array} \right. .\] 
This map $G$ is defined piecewise on $(0,1]$ because the only fixed point of $A$ in $V$ is zero, and $A$ is expanding on $(0,1/2]$. 
We can retrieve the domains of definition of $G$ in a similar way as for the Farey Map. 
For $n \geq 1$ let $\mathbf{A}_n = B A^{n-1}$, then the set of points that take $n-1$ iterations of $F$ to get to the simplex $B^{-1}V$ are in the simplex $\mathbb{A}_n = \mathbf{A}_n^{-1} V$. 
Note that for all $n \in \mathbb{Z}$ 
\[ A^{n} = \mat{cc}{1 & 0 \\ -n & 1},\]
and that gives an expression for the simplexes $\mathbb{A}_n $  
\[ \mathbb{A}_n = A^{1-n} B^{-1} V = \mat{cc}{1 & 0 \\ n-1 & 1} \Mat{cc}{1 & 1 \\ 1 & 2} = \Mat{cc}{1 & 1 \\ n & n+1}, \]
and for the matrices $\mathbf{A}_n$
\[ \mathbf{A}_n = \mat{cc}{-1 & 1 \\ 1 & 0} \mat{cc}{1 & 0 \\ 1-n & 1} = \mat{cc}{-n & 1 \\ 1 & 0} .\]
The map $G$ can be written as
\[ G (\mathbf{v}) = \left\lbrace \begin{array}{lcc} 
   \mathbf{A}_n \mathbf{v} & \text{ if } & \mathbf{v} \in \mathbb{A}_n \\ 
   \Mat{c}{0 \\ 1}& \text{ if } & \mathbf{v} = \Mat{c}{0 \\ 1} \end{array} \right. .\] 
and when projected to $\mathbb{R}$, the map $G$ takes the form of the classic Gauss Map
\[ G (x) = \left\lbrace \begin{array}{lcc} 
   \frac{1}{x} - n & \text{ if } & \frac{1}{n+1} < x \leq \frac{1}{n} \\ 
   0 & \text{ if } & x = 0 \end{array} \right. 
   \hspace*{2em} \text{or} \hspace*{2em} 
   G (x) = \left\lbrace \begin{array}{lcc} 
   \frac{1}{x} \mod 1 & \text{ if } & x \neq 0 \\ 
   0 & \text{ if } & x = 0 \end{array} \right. .\] 

In the Figure \ref{graph_farey_gauss} we have the graphs of the Farey and Gauss map when projected to the unit interval.
Note that some of the domains share an end point and the corresponding transformations do not match in this common point, so it is necessary to take the usual convention $\mathbb{A}_n = (1/(n+1), 1/n]$ in order to get the preferred expression in continued fractions for the rational numbers (the one that doesn't end with the symbol $\mathbb{A}_1$).
Note that with this convention there are no preimages of $1$.

Recall that irrational numbers have unique expressions as regular continued fractions and rational numbers have two possible expressions and both are finite:
\[ [a_0 : a_1 , a_2, ...] = a_0 + \cfrac{1}{a_1 + \cfrac{1}{a_2 + \cfrac{1}{\ddots}}} \hspace*{2em} \text{and} \hspace*{2em} [a_0: a_1, ..., a_n] = [a_0: a_1, ..., a_n - 1, 1] .\]
The action of the Gauss Map on the continued fraction expansion is the shift, i.e. $G([0: a_1, a_2, ...]) = [0: a_2, ...]$

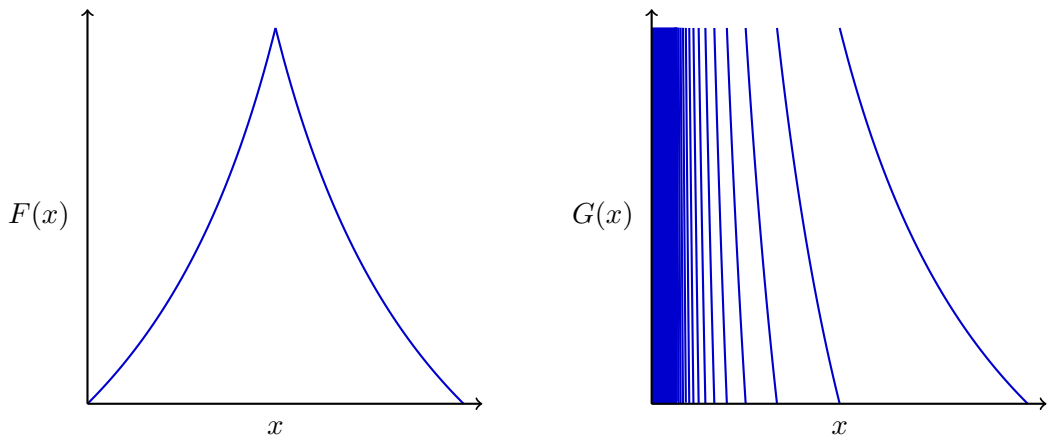
\begin{figure}
\begin{center}
 \begin{tikzpicture}[scale=0.5]
 
 \draw[blue!80!black, thick, domain=-15:-10, smooth, variable=\x] plot ({\x},{(10*\x + 150) / (-5 - \x)});
 \draw[blue!80!black, thick, domain=-10: -5, smooth, variable=\x] plot ({\x},{(-50 - 10*\x) / (\x + 15)});
 
 \draw[->,thick] (-15,0) --(-15,10.5);
 \draw[->,thick] (-15,0) --(-4.5,0);
 
 \node[black, left] at (-15.2, 5) {$F(x)$};
 \node[black, below] at (-10, -0.2) {$x$};

  \foreach \y in {1,...,15}
     {
        \draw[blue!80!black, thick, domain=10/(\y+1):10/(\y), smooth, variable=\x] plot ({\x},{(100/\x) - 10*\y});
     }
  \draw [fill=blue!80!black,blue!80!black] (0,0) rectangle (0.66667,10); 
  
  \draw[->,thick] (0,0) --(10.5,0);
  \draw[->,thick] (0,0) --(0,10.5);
  
  \node[black, below] at (5, -0.2) {$x$};
  \node[black, left] at (-0.2, 5) {$G(x)$};

 \end{tikzpicture}
 \caption{Graphs of the Farey Map and the Gauss Map}
 \label{graph_farey_gauss}
\end{center}
\end{figure}

Now we review some properties of the Gauss Map and the continued fractions using our notation. 
Let's begin with the associated group. 

\begin{prop}
 The group generated by the matrices $A$ and $B$ is $SL(2,\mathbb{Z})$.
\end{prop}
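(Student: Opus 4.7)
The plan is to split the equality into two inclusions, with all the substantive work in $SL(2,\mathbb{Z}) \subseteq \langle A, B\rangle$. The reverse inclusion is essentially a matter of bookkeeping: $A \in SL(2,\mathbb{Z})$ by inspection, while $\det B = -1$ so $B \notin SL(2,\mathbb{Z})$ strictly speaking; one must read the statement in its projective sense (so that $B$ and $-B$ are identified) or interpret the ambient group as $GL(2,\mathbb{Z})$ and track the determinant at the end. This mild ambiguity is the only real obstacle.

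My strategy for the main inclusion is to exhibit two well-known generators of $SL(2,\mathbb{Z})$ as words in $A$ and $B$. A convenient choice is the pair of elementary shears
\[ L = \mat{cc}{1 & 0 \\ 1 & 1} \qquad \text{and} \qquad T = \mat{cc}{1 & 1 \\ 0 & 1}. \]
One sees immediately that $L = A^{-1}$, and a direct computation gives $B A B^{-1} = T^{-1}$, hence $T = B A^{-1} B^{-1}$. Both $L$ and $T$ therefore lie in $\langle A, B\rangle$.

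With these in hand, the remainder is the classical Euclidean-algorithm proof that $\langle L, T\rangle = SL(2,\mathbb{Z})$. For $M = \mat{cc}{a & b \\ c & d} \in SL(2,\mathbb{Z})$, left-multiplication by $T^{\pm 1}$ (resp.\ $L^{\pm 1}$) adds an integer multiple of row $2$ to row $1$ (resp.\ of row $1$ to row $2$), so iterating the Euclidean algorithm on the first column reduces $M$ to an upper-triangular matrix with zero in position $(2,1)$. Since $\det = 1$, such a matrix has the form $\pm T^{k}$, and the sign is absorbed by noting that $-I = (T L^{-1} T)^{2} \in \langle L, T\rangle$. This completes the reduction and the proof.
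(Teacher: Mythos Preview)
Your argument is correct and follows essentially the same route as the paper: both proofs hinge on the computation $T = B A^{-1} B^{-1}$ and then invoke a standard generating set for $SL(2,\mathbb{Z})$. The paper pairs $T$ with $S = (TAT)^{-1}$ and simply cites that $\langle S,T\rangle = SL(2,\mathbb{Z})$, whereas you pair $T$ with $L = A^{-1}$ and supply the Euclidean-algorithm reduction yourself, which makes your version self-contained. Your observation about $\det B = -1$ is also well taken: the paper asserts $A,B \in SL(2,\mathbb{Z})$, which is false for $B$, so the statement is only literally correct when read projectively (or as $SL(2,\mathbb{Z}) \subseteq \langle A,B\rangle$, with the full group being $GL(2,\mathbb{Z})$); the paper glosses over this point.
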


\begin{proof}
 It is known that $SL(2, \mathbb{Z})$ is generated by the matrices \[S = \mat{cc}{0 & -1 \\ 1 & 0} \hspace*{2em} T = \mat{cc}{1 & 1 \\ 0 & 1} \] and since we can write these matrices as $T = B A^{-1} B^{-1}$ and $S = (T A T)^{-1}$ we have that \[\langle A, B \rangle = \langle S, T \rangle = SL(2, \mathbb{Z}).\] 
\end{proof}

By a similar argument we have that $\langle \lbrace \mathbf{A}_n : n \geq 1 \rbrace \rangle = SL(2, \mathbb{Z})$. 
Even though the action of $G$ (and $F$) is defined only in the simplex $V$, the action of $SL(2, \mathbb{Z})$ is defined on all $\RP{1}$. 
In this case, the simplex $V$ represents all real numbers up to integral part and the action of $SL(2, \mathbb{Z})$ relates every real number with its fractional part:
Let $\alpha \in \mathbb{R}$ and $n_0 \in \mathbb{Z}$ such that $n_0 \leq \alpha < n_0 + 1$. 
We have that \[\mat{cc}{1 & -n_0 \\ 0 & 1} \in SL(2, \mathbb{Z}) \hspace*{2em} \text{and} \hspace*{2em} \mat{cc}{1 & -n_0 \\ 0 & 1} \Mat{c}{\alpha \\ 1} = \Mat{c}{\alpha - n_0 \\ 1} \in V .\]

\begin{prop}
 For the map $G$ all rational points are preimages of zero.
\end{prop}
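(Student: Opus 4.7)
The plan is to show that one iteration of $G$ on a rational point written in lowest-terms homogeneous coordinates performs a single step of the Euclidean algorithm on the two integer entries. Since the Euclidean algorithm on a coprime pair terminates at $(0,1)$, every rational orbit must land in finite time on the point $\Mat{c}{0 \\ 1}$, which projects to $0$.

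Concretely, represent a rational point $\mathbf{v}\in V$ as $\mathbf{v} = \Mat{c}{p \\ q}$ with $\gcd(p,q)=1$, $q\geq 1$, and $0\leq p\leq q$. If $p=0$, we are already at $\Mat{c}{0 \\ 1}$ and there is nothing to do. Otherwise $\mathbf{v}\in \mathbb{A}_n$ for a unique $n\geq 1$ determined by the half-open convention $\mathbb{A}_n = (1/(n+1), 1/n]$, namely $n=\lfloor q/p\rfloor$, and
\[ G(\mathbf{v}) \;=\; \mathbf{A}_n \mathbf{v} \;=\; \mat{cc}{-n & 1 \\ 1 & 0} \Mat{c}{p \\ q} \;=\; \Mat{c}{q-np \\ p}. \]
Because $\mathbf{A}_n\in SL(2,\mathbb{Z})$, the new coordinates are still coprime integers, and the definition of $n$ forces $0\le q-np<p$. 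Hence the second (denominator) coordinate drops from $q$ to $p$, which is strictly smaller unless $p=q$; the only coprime instance of this is $p=q=1$, for which one checks $G\bigl(\Mat{c}{1 \\ 1}\bigr)=\Mat{c}{0 \\ 1}$ directly.

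The sequence of denominators of $G^k(\mathbf{v})$ is therefore a strictly decreasing sequence of positive integers as long as the numerator remains nonzero, so after finitely many iterations the numerator must vanish; since coprimality is preserved at every step, this forces $G^k(\mathbf{v})=\Mat{c}{0 \\ 1}$, the point $0$. There is essentially no obstacle in this argument: the entire content is the observation that multiplication by $\mathbf{A}_n$ encodes one division-with-remainder step on $(p,q)$, so the statement is a projective restatement of termination of the Euclidean algorithm.
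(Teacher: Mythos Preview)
Your proof is correct, but it takes a genuinely different route from the paper's. The paper argues algebraically: given $p/q$ in lowest terms, B\'ezout's identity supplies integers $a,b$ with $ap+bq=1$, and the matrix $\mat{cc}{b & p \\ -a & q}\in SL(2,\mathbb{Z})$ sends $\Mat{c}{0\\1}$ to $\Mat{c}{p\\q}$; the conclusion then rests on the correspondence between $G$-orbits and $SL(2,\mathbb{Z})$-orbits announced in the introduction. Your argument is instead purely dynamical: you compute $G$ on integer homogeneous coordinates, recognize it as one division-with-remainder step, and invoke termination of the Euclidean algorithm.

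Your approach is more constructive (it bounds the number of iterations by the length of the Euclidean algorithm on $(p,q)$) and does not lean on the orbit correspondence, which in the introduction is actually only asserted in the direction ``same $G$-orbit $\Rightarrow$ same group orbit''. It is also worth noting that your strategy is exactly the one the paper itself adopts for the $2$- and $3$-dimensional analogues later on, where no B\'ezout shortcut is available: there too the proof tracks a strictly decreasing integer coordinate under iteration of $G$. So your argument is in fact better aligned with the paper's higher-dimensional treatment than the paper's own one-dimensional proof is.
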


\begin{proof}
 A rational number $p/q \in [0,1]$ is represented in homogeneous coordinates by a vector with integer entries \[\mathbf{v} = \Mat{c}{p \\ q}.\] 
 Suppose $GCD (p, q) = 1$, then there exists $a,b \in \mathbb{Z}$ such that $a p + b q = 1$. Consider the matrix 
 \[\mathbf{M} = \mat{cc}{b & p \\ -a & q}.\]
 We have that $\mathbf{M} \in SL(2, \mathbb{Z})$ by the previous condition and also 
 \[ \mathbf{M} \Mat{c}{0 \\ 1} = \mathbf{v} \]
 so $\mathbf{v}$ and $0$ are in the same orbit for $G$.
\end{proof}

Consider the simplex $V$ embedded in $\mathbb{R}^{2}$ as the convex hull of the points $(0,1)$ and $(1,1)$. For a number $\alpha \in [0,1]$ let $\mathbf{v}$ be the straight line in $\mathbb{R}^2$ going through the origin and the point $(\alpha, 1)$, this is the corresponding point in $\RP{1}$. 
When $\alpha \in \mathbb{Q}$ the line $\mathbf{v}$ passes through infinitely many points in $\mathbb{Z}^2$, with the closest to the origin having as coordinates the numerator and denominator of $\alpha$ in lowest terms. 
When $\alpha$ is irrational the line $\mathbf{v}$ does not intersect $\mathbb{Z}^2$. 

For any irrational number $\alpha$ there is an approximation by rational numbers given by the continued fractions, the convergents: for $\alpha = [0: a_1, a_2, ...]$ the $n$-th convergent is the rational number $\frac{p_n}{q_n} = [0: a_1, ..., a_n]$.
This approximation is the \emph{best} in the sense that if any rational number $a/b$ is closer to $\alpha$ than a convergent $p_n/q_n$, then $b > q_n$.
We will state this property in terms of \emph{approximating simplexes}. 
For each $n \geq 1$ let $\mathbf{I}_{\mathbf{v}, n} = \left( \mathbf{A}_{a_1}^{-1} \mathbf{A}_{a_2}^{-1} \cdots \mathbf{A}_{a_n}^{-1} \right)$. 
The point $\mathbf{v}$ is in the simplex $\mathbb{I}_{n}(\mathbf{v}) = \mathbf{I}_{\alpha,n} V$.
We have by construction that $\mathbb{I}_{n+1}(\mathbf{v}) \subset \mathbb{I}_{n}(\mathbf{v})$ and by the uniqueness of the continued fractions $\mathbf{v} = \bigcap_{n \geq 1} \mathbb{I}_{n}(\mathbf{v})$.
We call the sequence $\mathbb{I}_{n}(\mathbf{v})$ the approximating simplexes of $\mathbf{v}$.

\begin{prop}
 The matrices $\mathbf{I}_{\mathbf{v}, n}$ satisfy \[ \mathbf{I}_{\mathbf{v}, n} = \mat{cc}{p_{n-1} & p_{n} \\ q_{n-1} & q_{n}} .\]
\end{prop}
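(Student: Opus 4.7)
The plan is to argue by induction on $n$, after first unpacking $\mathbf{A}_n^{-1}$ explicitly. Since $\mathbf{A}_n = \left(\begin{smallmatrix} -n & 1 \\ 1 & 0 \end{smallmatrix}\right)$ has determinant $-1$, a direct calculation gives $\mathbf{A}_n^{-1} = \left(\begin{smallmatrix} 0 & 1 \\ 1 & n \end{smallmatrix}\right)$. This is the only real computational input, and it is what will couple the matrix recursion to the scalar recursion $p_n = a_n p_{n-1} + p_{n-2}$, $q_n = a_n q_{n-1} + q_{n-2}$ for the convergents.

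For the base case I would take $n = 1$ with the usual conventions $p_{-1} = 1$, $q_{-1} = 0$, $p_0 = 0$, $q_0 = 1$, $p_1 = 1$, $q_1 = a_1$. Then $\mathbf{I}_{\mathbf{v},1} = \mathbf{A}_{a_1}^{-1} = \left(\begin{smallmatrix} 0 & 1 \\ 1 & a_1 \end{smallmatrix}\right)$ which matches $\left(\begin{smallmatrix} p_0 & p_1 \\ q_0 & q_1 \end{smallmatrix}\right)$ on the nose. (If desired, one could even start at $n=0$ by declaring $\mathbf{I}_{\mathbf{v},0}$ to be the identity, which equals $\left(\begin{smallmatrix} p_{-1} & p_0 \\ q_{-1} & q_0 \end{smallmatrix}\right)$.)

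For the inductive step, assume $\mathbf{I}_{\mathbf{v},n} = \left(\begin{smallmatrix} p_{n-1} & p_n \\ q_{n-1} & q_n \end{smallmatrix}\right)$. By definition $\mathbf{I}_{\mathbf{v},n+1} = \mathbf{I}_{\mathbf{v},n} \cdot \mathbf{A}_{a_{n+1}}^{-1}$, and multiplying out using the formula for $\mathbf{A}_{a_{n+1}}^{-1}$ yields
\[ \mathbf{I}_{\mathbf{v},n+1} = \mat{cc}{p_n & p_{n-1} + a_{n+1}\, p_n \\ q_n & q_{n-1} + a_{n+1}\, q_n}. \]
The second column is precisely $\left(\begin{smallmatrix} p_{n+1} \\ q_{n+1} \end{smallmatrix}\right)$ by the convergent recurrence, so the matrix equals $\left(\begin{smallmatrix} p_n & p_{n+1} \\ q_n & q_{n+1} \end{smallmatrix}\right)$, closing the induction.

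There is no real obstacle here; the only subtlety is making sure the indexing convention for the convergents (where $p_0/q_0$ corresponds to $[0:]$ and $p_n/q_n$ to $[0:a_1,\ldots,a_n]$) is consistent with the product $\mathbf{A}_{a_1}^{-1}\cdots \mathbf{A}_{a_n}^{-1}$. Once the base case is set up with this convention, the multiplication by $\mathbf{A}_{a_{n+1}}^{-1}$ \emph{is} the convergent recursion, and the identification is automatic.
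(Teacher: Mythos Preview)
Your proof is correct and follows essentially the same approach as the paper: induction on $n$, with the base case $\mathbf{I}_{\mathbf{v},1}=\mathbf{A}_{a_1}^{-1}=\left(\begin{smallmatrix}0&1\\1&a_1\end{smallmatrix}\right)$ and the inductive step reducing to the convergent recurrence via right-multiplication by $\mathbf{A}_{a_{n+1}}^{-1}$. The only cosmetic difference is that the paper indexes the step as $n-1\to n$ rather than $n\to n+1$.
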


\begin{proof}
 For any irrational number $\alpha = [0: a_1, a_2, ...]$ the $0$-th convergent is zero and the $1$-st convergent is $1/a_1$, so for the case $n=1$ we have \[ \mathbf{I}_{\mathbf{v},1} = \mathbf{A}_{a_1}^{-1} = \mat{cc}{0 & 1 \\ 1 & a_1} = \mat{cc}{p_{0} & p_{1} \\ q_{0} & q_{1}}. \]
 Now suppose the claim is valid for $n-1$,  we can write $\mathbf{I}_{\mathbf{v}, n} = \mathbf{I}_{\alpha,n-1} \mathbf{A}_{a_n}^{-1}$ and we have that \[ \mathbf{I}_{\mathbf{v}, n} = \mat{cc}{p_{n-2} & p_{n-1} \\ q_{n-2} & q_{n-1}} \mat{cc}{0 & 1 \\ 1 & a_n} = \mat{cc}{p_{n-1} & a_n p_{n-1} + p_{n-2}\\ q_{n-1} & a_n q_{n-1} + q_{n-2}} = \mat{cc}{p_{n-1} & p_{n} \\ q_{n-1} & q_{n}}, \]
 with the last equality given by the recurrence of the convergents, see \cite{Khinchin}.
\end{proof}

\begin{coro}
 The approximating simplexes are of the form \[ \mathbb{I}_{n}(\mathbf{v}) = \Mat{cc}{ p_n & p_n + p_{n-1} \\ q_n & q_n + q_{n-1} }, \]
 i.e. the columns of $\mathbb{I}_{n}(\mathbf{v})$ are the $n$-th convergent and the \emph{mediant}\footnote{The \emph{mediant} of two rational numbers $a/b$, $c/d$ is $(a+c)/(b+d)$. } of the $n$-th and the $(n-1)$-th convergents in homogeneous coordinates.
\end{coro}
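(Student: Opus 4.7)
The plan is essentially immediate from the preceding proposition: by definition $\mathbb{I}_{n}(\mathbf{v}) = \mathbf{I}_{\mathbf{v},n} V$, and we have just established that $\mathbf{I}_{\mathbf{v},n}$ equals $\mat{cc}{p_{n-1} & p_{n} \\ q_{n-1} & q_{n}}$. So the only step is to perform the matrix product
\[ \mathbb{I}_{n}(\mathbf{v}) = \mat{cc}{p_{n-1} & p_{n} \\ q_{n-1} & q_{n}} \Mat{cc}{0 & 1 \\ 1 & 1} \]
and read off the two columns, which gives $\Mat{c}{p_n \\ q_n}$ as the first vertex and $\Mat{c}{p_n + p_{n-1} \\ q_n + q_{n-1}}$ as the second.

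After the computation, I would add one sentence to justify the parenthetical remark about the mediant: the second vertex in homogeneous coordinates corresponds to the rational $(p_n + p_{n-1})/(q_n + q_{n-1})$, which is by definition the mediant of $p_n/q_n$ and $p_{n-1}/q_{n-1}$, so the geometric description promised in the statement holds.

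There is no real obstacle here; the proof is a one-line multiplication and a reference back to the previous proposition. The only minor point worth mentioning is that the column ordering in $V$ places the vertex $\Mat{c}{0 \\ 1}$ first and $\Mat{c}{1 \\ 1}$ second, which is why $p_n/q_n$ (the convergent) appears as the first vertex of $\mathbb{I}_n(\mathbf{v})$ and the mediant as the second; this matches the convention used throughout the section.
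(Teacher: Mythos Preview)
Your proposal is correct and matches the paper's approach: the corollary is stated in the paper without proof, as it follows immediately from the preceding proposition by multiplying $\mathbf{I}_{\mathbf{v},n}$ against $V = \Mat{cc}{0&1\\1&1}$, exactly as you describe.
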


\begin{coro}[Twisted nesting]
 For consecutive convergents we have either \[ \frac{p_n}{q_n} < \frac{p_n + p_{n+1}}{q_n + q_{n+1}} < \alpha < \frac{p_{n+1}}{q_{n+1}} < \frac{p_n + p_{n-1}}{q_n + q_{n-1}} \hspace*{2em}\text{ or }\hspace*{2em} \frac{p_n}{q_n} > \frac{p_n + p_{n+1}}{q_n + q_{n+1}} > \alpha > \frac{p_{n+1}}{q_{n+1}} > \frac{p_n + p_{n-1}}{q_n + q_{n-1}}. \]
\end{coro}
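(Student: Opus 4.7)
The plan is to read off both chains directly from the vertex description supplied by the preceding corollary. The simplex $\mathbb{I}_n(\mathbf{v})$ is the interval in $\mathbb{R}$ with endpoints $p_n/q_n$ and $(p_n+p_{n-1})/(q_n+q_{n-1})$, while $\mathbb{I}_{n+1}(\mathbf{v})$ has endpoints $p_{n+1}/q_{n+1}$ and $(p_{n+1}+p_n)/(q_{n+1}+q_n)$. Since $\alpha \in \mathbb{I}_{n+1}(\mathbf{v}) \subset \mathbb{I}_n(\mathbf{v})$ and $\alpha$ is irrational, it lies strictly between the endpoints of $\mathbb{I}_{n+1}$, which in turn lie strictly between the endpoints of $\mathbb{I}_n$. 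All five numbers in each chain are therefore already on the real line in a nested configuration; what remains is to decide which endpoint of each simplex is the left one.

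The orientation is controlled by the determinant of $\mathbf{I}_{\mathbf{v},n}$. Each factor $\mathbf{A}_{a_k}^{-1} = \mat{cc}{0 & 1 \\ 1 & a_k}$ has determinant $-1$, so $\det \mathbf{I}_{\mathbf{v},n} = (-1)^n$, which by the preceding proposition is the classical identity $p_{n-1}q_n - p_n q_{n-1} = (-1)^n$. From this identity I would record three cross-differences, each a one-line computation:
\[ \frac{p_n+p_{n-1}}{q_n+q_{n-1}} - \frac{p_n}{q_n} = \frac{(-1)^n}{q_n(q_n+q_{n-1})}, \qquad \frac{p_{n+1}+p_n}{q_{n+1}+q_n} - \frac{p_{n+1}}{q_{n+1}} = \frac{(-1)^{n+1}}{q_{n+1}(q_{n+1}+q_n)}, \]
\[ \frac{p_n+p_{n+1}}{q_n+q_{n+1}} - \frac{p_n}{q_n} = \frac{(-1)^n}{q_n(q_n+q_{n+1})}. \]

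When $n$ is even these three signs are $+,-,+$: the first says $p_n/q_n$ is the left endpoint of $\mathbb{I}_n$; the second says $(p_{n+1}+p_n)/(q_{n+1}+q_n)$ is the left endpoint of $\mathbb{I}_{n+1}$; the third is consistent with the nesting $\mathbb{I}_{n+1}\subset\mathbb{I}_n$, placing the left endpoint of $\mathbb{I}_{n+1}$ to the right of the left endpoint of $\mathbb{I}_n$. Inserting $\alpha$ between the endpoints of $\mathbb{I}_{n+1}$ yields the first chain in the statement. When $n$ is odd all three signs flip simultaneously and the second chain is obtained. The only real obstacle is the sign bookkeeping for the three differences, but each one reduces to a single application of $p_{k-1}q_k - p_k q_{k-1} = (-1)^k$.
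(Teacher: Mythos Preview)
Your argument is correct and is exactly the approach the paper takes: the paper's entire proof is the single sentence ``We get the last corollary from the facts $\mathbb{I}_{n+1}(\mathbf{v}) \subset \mathbb{I}_{n}(\mathbf{v})$ and $\det \mathbf{I}_{\mathbf{v}, n} = (-1)^{n}$,'' and you have simply unpacked how the nesting and the alternating determinant combine to give the two chains.
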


We get the last corollary from the facts $\mathbb{I}_{n+1}(\mathbf{v}) \subset \mathbb{I}_{n}(\mathbf{v})$ and $\det \mathbf{I}_{\mathbf{v}, n} = (-1)^{n}$.

For each of the approximating simplexes $\mathbb{I}_{n}(\mathbf{v})$ consider the triangle in $\mathbb{R}^2$ with vertices in the origin and the two points in $\mathbb{Z}^2$ which give the end points of $\mathbb{I}_{n}(\mathbf{v})$, i.e. $(p_n, q_n)$ and $(p_n + p_{n-1}, q_n + q_{n-1})$. Suppose there is a point of $\mathbb{Z}^2$ inside this triangle, that would mean there is a straight line passing through such point, and that line would correspond to a rational number closer to $\alpha$ than the convergent $p_n/q_n$.
Moreover, being inside the triangle would imply that the denominator of such rational number is smaller than the denominator of the convergents, which would imply this number approximates $\alpha$ \emph{better} than the convergents.

\begin{prop}[Best approximations]
 For all irrational numbers and any approximating simplex the related triangle does not contain points of $\mathbb{Z}^2$ other than its vertices.
\end{prop}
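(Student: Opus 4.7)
The plan is to prove this via a change of basis argument: show that the two non-origin vertices of the triangle form a $\mathbb{Z}$-basis of $\mathbb{Z}^{2}$, and then use uniqueness of integer coefficients to enumerate the lattice points inside the triangle.

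First, I would set $\mathbf{u}_1 = (p_n, q_n)$ and $\mathbf{u}_2 = (p_n + p_{n-1}, q_n + q_{n-1})$, which are precisely the columns of the matrix $\mathbb{I}_n(\mathbf{v})$ from the earlier corollary. By the proposition preceding that corollary, $\det \mathbf{I}_{\mathbf{v},n} = (-1)^n$, and a short computation gives $\det \mathbb{I}_n(\mathbf{v}) = p_n q_{n-1} - p_{n-1} q_n = (-1)^{n+1}$, so $|\det \mathbb{I}_n(\mathbf{v})| = 1$. This is exactly the condition that $\mathbf{u}_1$ and $\mathbf{u}_2$ form a $\mathbb{Z}$-basis of the lattice $\mathbb{Z}^{2}$.

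Next, I would translate the condition of lying in the triangle into barycentric-style coordinates with respect to this basis. Any $(a,b) \in \mathbb{R}^{2}$ can be written uniquely as $(a,b) = s \mathbf{u}_1 + t \mathbf{u}_2$ with $s, t \in \mathbb{R}$, and the triangle with vertices $0, \mathbf{u}_1, \mathbf{u}_2$ is the set where $s, t \geq 0$ and $s + t \leq 1$. Since $\{\mathbf{u}_1, \mathbf{u}_2\}$ is a $\mathbb{Z}$-basis, $(a,b) \in \mathbb{Z}^{2}$ if and only if $s, t \in \mathbb{Z}$. Combining the two conditions leaves only $(s,t) \in \{(0,0), (1,0), (0,1)\}$, which correspond exactly to the three vertices of the triangle.

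I do not foresee a serious obstacle: the argument is essentially a one-line observation once the determinant has been computed and one recognises that unimodular matrices give lattice bases. The only thing to be careful about is correctly identifying the vertices of $\mathbb{I}_n(\mathbf{v})$ with the columns of that matrix (using the homogeneous coordinate conventions of the introduction), and tracking the sign of the determinant to be sure $|\det| = 1$ rather than some other value.
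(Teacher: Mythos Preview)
Your argument is correct. The paper, however, proves this statement via Pick's Theorem: the triangle with vertices $(0,0)$, $(0,1)$, $(1,1)$ has area $1/2$, and since the approximating triangles are images of this triangle under $SL(2,\mathbb{Z})$ they all have area $1/2$; Pick's formula $\mathrm{Area} = \iota + \kappa/2 - 1$ then forces $\iota = 0$ and $\kappa = 3$.

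Your route---observing that $|\det \mathbb{I}_n(\mathbf{v})| = 1$ means the two non-origin vertices form a $\mathbb{Z}$-basis, and then reading off the lattice points from the barycentric constraints---is more self-contained (it does not invoke Pick) and, more importantly, it generalises cleanly to higher dimensions, where Pick's Theorem is unavailable. Indeed, the paper itself abandons Pick and uses essentially your convex-combination-plus-integer-inverse argument when it proves the analogous ``Best approximations'' statement in the $2$-dimensional case. So your approach is arguably the more natural one in the context of this paper.
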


\begin{proof}
 This is a direct result of \emph{Pick's Theorem} which states that the area of a polygon with vertices in $\mathbb{Z}^2$ is exactly \[ \text{Area} = \iota + \frac{\kappa}{2} - 1,\] where $\iota$ is the number of interior integral points and $\kappa$ is the number of boundary integral points. 
 If the polygon is a triangle with area $1/2$ there are at least three boundary points (the vertices) and  \emph{Pick's Theorem} gives that \[ \frac{1}{2} = \iota  + \frac{\kappa' + 3}{2} - 1 \hspace*{1em} \Longrightarrow \hspace{1em} \frac{\kappa'}{2} + \iota = 0.\]
 In other words, the number of interior integral points and non-vertex boundary integral points is zero.
 
 The approximating simplexes are all of the form $\mathbf{M} V$ with $\mathbf{M} \in SL(2, \mathbb{Z})$. 
 The triangle with vertices in the origin and the end points of $V$ has area $1/2$ and its only integral points are boundary points: $(0,0)$, $(0,1)$ and $(1,1)$. 
 Since $SL(2,\mathbb{Z})$ preserves the area of these triangles, none of this triangles contain integral points other than its vertices.
\end{proof}

\change

\begin{prop}[Rate of convergence]
 For any irrational number $\alpha$ the sequence of convergents satisfies \[ \frac{1}{2 q_{n+1}^2} < \frac{1}{q_n (q_n + q_{n+1})} < \left| \alpha - \frac{p_n}{q_n} \right| < \frac{1}{q_n q_{n+1}} < \frac{1}{q_n^2}\]
\end{prop}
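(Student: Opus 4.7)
The plan is to extract all four inequalities from two ingredients already established: the Twisted Nesting corollary, which sandwiches $\alpha$ between explicit rationals, and the identity $\det \mathbf{I}_{\mathbf{v},n} = (-1)^{n}$ implicit in the preceding proposition, which controls distances between consecutive convergents and mediants.

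First I would apply the Twisted Nesting corollary at index $n$. In both cases of the corollary, $\alpha$ lies strictly between the mediant $(p_n+p_{n+1})/(q_n+q_{n+1})$ and the next convergent $p_{n+1}/q_{n+1}$, while $p_n/q_n$ lies on the opposite end of the nested chain. Consequently
\[
\left|\frac{p_n+p_{n+1}}{q_n+q_{n+1}} - \frac{p_n}{q_n}\right| \;<\; \left|\alpha - \frac{p_n}{q_n}\right| \;<\; \left|\frac{p_{n+1}}{q_{n+1}} - \frac{p_n}{q_n}\right|.
\]

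Next I would evaluate both rational gaps. Since $\mathbf{I}_{\mathbf{v},n+1}$ has columns $(p_n,q_n)^{T}$ and $(p_{n+1},q_{n+1})^{T}$ and unit determinant up to sign, we have $|p_n q_{n+1} - p_{n+1} q_n| = 1$. Cross-multiplying gives the two key identities
\[
\left|\frac{p_{n+1}}{q_{n+1}} - \frac{p_n}{q_n}\right| = \frac{1}{q_n q_{n+1}}, \qquad \left|\frac{p_n+p_{n+1}}{q_n+q_{n+1}} - \frac{p_n}{q_n}\right| = \frac{1}{q_n(q_n+q_{n+1})},
\]
which are precisely the inner bounds asserted in the proposition.

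For the outer inequalities it suffices to prove $q_n < q_{n+1}$ for $n \geq 1$. This follows from the standard recurrence $q_{n+1} = a_{n+1} q_n + q_{n-1}$ with $a_{n+1} \geq 1$ and $q_{n-1} \geq 1$; one can also read it directly off the matrix product $\mathbf{I}_{\mathbf{v},n+1} = \mathbf{I}_{\mathbf{v},n}\,\mathbf{A}_{a_{n+1}}^{-1}$ established above. Once $q_n < q_{n+1}$ is in hand, the right outer inequality $1/(q_n q_{n+1}) < 1/q_n^2$ is immediate, and the left outer inequality rearranges to $q_n^2 + q_n q_{n+1} < 2 q_{n+1}^2$, which factors as $(q_{n+1}-q_n)(2q_{n+1}+q_n) > 0$ and is therefore equivalent to $q_n < q_{n+1}$.

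There is no real obstacle here: the content of the proposition is entirely geometric, packaged in the Twisted Nesting corollary, and the arithmetic is just the determinant identity together with the monotonicity of denominators. The only point requiring mild care is checking that the strict inequality $q_n < q_{n+1}$ holds from $n \geq 1$ onwards (so that all four inequalities are strict), which is why the estimate starts at the first convergent rather than the zeroth.
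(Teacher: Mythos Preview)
Your argument is correct and mirrors the paper's own proof: both use the twisted nesting to sandwich $|\alpha - p_n/q_n|$ between the distance to the mediant and the distance to the next convergent, evaluate those two gaps via the unit-determinant identity $|p_{n+1}q_n - p_n q_{n+1}| = 1$, and derive the outer inequalities from the monotonicity $q_n < q_{n+1}$. Your write-up is slightly more explicit about the factorization behind the leftmost inequality and about where strictness of $q_n < q_{n+1}$ first kicks in, but the substance is identical.
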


\begin{proof}
 We get the first and last inequalities from the sequence $q_n$ being increasing. 
 Let $\mathbf{v} = \Mat{c}{\alpha \\ 1}$.
 Recall that for all $n>0$ the number $\alpha$ lies between the $n$-th and $(n+1)$-th convergents, i.e. \[ \left| \alpha - \frac{p_n}{q_n} \right| < \left| \frac{p_{n+1}}{q_{n+1}} - \frac{p_n}{q_n} \right|.\]
 Now, $|\det \mathbb{I}_{n+1}(\mathbf{v})| = |p_{n+1} q_n -  p_n q_{n+1}| = 1$, and this implies that \[ \left| \frac{p_{n+1}}{q_{n+1}} - \frac{p_n}{q_n} \right| = \frac{1}{q_n q_{n+1}}.\]
 Finally, from the \emph{twisted} nesting $\mathbb{I}_{n+1}(\mathbf{v}) \subset \mathbb{I}_{n}(\mathbf{v})$ we have that \[ \left| \alpha - \frac{p_n}{q_n} \right| > \left| \frac{p_{n+1} + p_{n}}{q_{n+1}+ q_{n}} - \frac{p_n}{q_n} \right| =  \frac{\left|\det \mathbb{I}_{n+1}(\mathbf{v})\right|}{q_n (q_n + q_{n+1})} .\]
\end{proof}


 Now consider the periodic points of $G$. 
 Let $\mathbf{v} \in V$ be a non-rational point and $k > 0$ be an integer such that $\mathbf{v} = G^{k} (\mathbf{v})$. 
 There is a matrix $M \in SL(2, \mathbb{Z})$ satisfying $M \mathbf{v} = \mathbf{v}$, in other words, $\mathbf{v}$ is an eigenvector of $M$. 
 This equation can be written as
 \[\mat{cc}{a & b \\ c & d} \Mat{c}{x \\ 1} = \Mat{c}{ax+b \\ cx+d} = \Mat{c}{x \\ 1},\]
 and this implies that $x$ is a root of a quadratic polynomial with integer coefficients:
 \[\frac{ax+b}{cx+d} = x \hspace*{2em} \Longrightarrow \hspace*{2em} cx^2 - ax + dx -b = 0.\]

 \begin{prop}
  Let $0 < x < 1$ be a quadratic algebraic number. 
  The point $\Mat{c}{x\\1}$ is eventually periodic.
 \end{prop}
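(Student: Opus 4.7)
The plan is to prove Lagrange's theorem: a quadratic irrational has an eventually periodic continued fraction expansion. The strategy is to show that each tail $G^n(\mathbf{v})$ satisfies a quadratic equation with integer coefficients whose size is uniformly bounded, so the triples of coefficients range over a finite subset of $\mathbb{Z}^3$, and pigeonhole then produces a repetition.

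Writing $\mathbf{v} = \Mat{c}{x \\ 1}$ and letting $x_n \in (0,1)$ denote the affine coordinate of $G^n(\mathbf{v})$, the previous proposition gives $\mathbf{v} = \mathbf{I}_{\mathbf{v},n}\, G^n(\mathbf{v})$, i.e.
\[ x = \frac{p_{n-1}\, x_n + p_n}{q_{n-1}\, x_n + q_n}. \]
Substituting into the minimal polynomial $f(T) = aT^2 + bT + c \in \mathbb{Z}[T]$ of $x$ and clearing the denominator produces $A_n x_n^2 + B_n x_n + C_n = 0$ with integer coefficients
\[ A_n = q_{n-1}^2\, f(p_{n-1}/q_{n-1}), \qquad C_n = q_n^2\, f(p_n/q_n), \]
and a mixed term $B_n$. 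Because the change of variable is induced by $\mathbf{I}_{\mathbf{v},n} \in SL(2,\mathbb{Z})$ up to sign, the discriminant is preserved: $B_n^2 - 4 A_n C_n = b^2 - 4ac$.

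The rate-of-convergence proposition provides the crucial bounds. Since $f(x) = 0$ and $p_n/q_n \to x$, the mean value theorem gives $|f(p_n/q_n)| = |f'(\xi_n)|\,|p_n/q_n - x|$ for some $\xi_n$ between the two values; as $\xi_n \to x$, the factor $|f'(\xi_n)|$ is uniformly bounded by some constant $K$ depending only on $f$. Combining with the estimate $|p_n/q_n - x| < 1/q_n^2$ yields $|C_n| < K$ and, by the same argument on the $(n-1)$-th convergent, $|A_n| < K$ for all sufficiently large $n$. The preserved discriminant then forces $|B_n| \leq \sqrt{|b^2-4ac| + 4K^2}$. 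Hence $(A_n, B_n, C_n)$ lies in a finite subset of $\mathbb{Z}^3$.

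The conclusion comes from pigeonhole. Among infinitely many $n$, some three indices $m < n < p$ share the same triple, so the irrationals $x_m, x_n, x_p \in (0,1)$ are all roots of the same quadratic polynomial, which has at most two distinct roots; hence $x_i = x_j$ for some $i < j$, giving $G^{j-i}(G^i \mathbf{v}) = G^i \mathbf{v}$ and proving eventual periodicity. The main subtlety to anticipate is precisely the possibility that a repeated triple furnishes conjugate roots rather than the same tail, which is why the argument appeals to three indices rather than two; a cleaner alternative would be to enlarge the pigeonhole data by one bit recording which of the two roots $x_n$ represents, but the three-index version avoids any case analysis.
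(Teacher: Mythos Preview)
Your proof is correct and follows essentially the same route as the paper: transform the quadratic form by the convergent matrix $\mathbf{I}_{\mathbf{v},n}$, bound the outer coefficients via the estimate $|x - p_n/q_n| < 1/q_n^2$, bound the middle coefficient via discriminant preservation under $SL(2,\mathbb{Z})$, and conclude by pigeonhole. The only cosmetic differences are that the paper expands $f(p_k/q_k)$ directly by writing $x = p_k/q_k + \varepsilon_k/q_k^2$ rather than invoking the mean value theorem, and it finishes with ``finitely many polynomials, hence finitely many possible values of $x_k$'' rather than your three-index pigeonhole.
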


 \begin{proof}
  Let $x = [0: a_1, a_2, ...]$ be an irrational such that $ax^2 + bx + c = 0$ for some $a,b,c \in \mathbb{Z}$, and \[\mathbf{v} = \Mat{c}{x\\1}. \hspace*{2em} \text{Note that } \mat{cc}{x & 1} \mat{cc}{a & b/2 \\ b/2 & c} \mat{c}{x\\1} = 0.\]
  We will show that the set $\lbrace G^{k}(\mathbf{v}) : k \geq 0 \rbrace$ is finite and therefore $\mathbf{v}$ is eventually periodic.
  
  For each $k$, let $x_k = [0: a_k, a_{k+1}, ...]$, then \[G^{k} (\mathbf{v}) = \Mat{c}{x_k\\1} = \mathbf{A}_{a_k} \cdots \mathbf{A}_{a_1} \Mat{c}{x\\1},\]
  \[\mathbf{A}_{a_1}^{-1} \cdots \mathbf{A}_{a_k}^{-1} \Mat{c}{x_k\\1} = \mat{cc}{p_{k-1} & p_k \\ q_{k-1} & q_k}  \Mat{c}{x_k\\1} = \Mat{c}{x\\1} 
  .\]
  Substituting in the quadratic equation we get that $x_k$ must satisfy the following:
  \[
    \mat{cc}{x_k & 1} \mat{cc}{p_{k-1} & q_{k-1} \\ p_k & q_k} \mat{cc}{a & b/2 \\ b/2 & c} \mat{cc}{p_{k-1} & p_k \\ q_{k-1} & q_k} \mat{c}{x_k\\1} = 0,
  \]
  {\tiny
  \[
    \mat{cc}{x_k & 1} \mat{cc}{a p_{k-1}^2 + b p_{k-1} q_{k-1} + c q_{k-1}^2 & a p_k p_{k-1} + \frac{b}{2} (p_k q_{k-1} + q_k p_{k-1}) + c q_k q_{k-1} \\ a p_k p_{k-1} + \frac{b}{2} (p_k q_{k-1} + q_k p_{k-1}) + c q_k q_{k-1} & a p_k^2 + b p_k q_k + c q_k^2} \mat{c}{x_k\\1} = 0.
  \]
  }
  As we will see, the coefficients of the resulting quadratic equation take finitely many different values, which implies that the numbers $x_k$ are roots of finitely many quadratic polynomials and therefore $x_k$ takes finitely many different values.
  
  For each convergent we have that \[\left| \frac{p_k}{q_k} - x \right| < \frac{1}{q_k^2}.\] 
  So we can find numbers $|\varepsilon_k| < 1$ such that \[x = \frac{p_k}{q_k} + \frac{\varepsilon_k}{q_k^2}.\]
  Denote the coefficients of the resulting quadratic equation as 
  \[
   \begin{split}
    \tilde{a} &= a p_{k-1}^2 + b p_{k-1} q_{k-1} + c q_{k-1}^2, \\
    \tilde{b}/2 &= a p_k p_{k-1} + \frac{b}{2} ( p_k q_{k-1} + q_k p_{k-1}) + c q_k q_{k-1}, \\
    \tilde{c} &= a p_k^2 + b p_k q_k + c q_k^2.
   \end{split}
  \]
 For $\tilde{c}$ we have that 
 \[
 \begin{split}
  \frac{\tilde{c}}{q_k^2} &= a \left(\frac{p_k}{q_k}\right)^2 + b \left(\frac{p_k}{q_k}\right) + c = a \left(x - \frac{\varepsilon_k}{q_k^2}\right)^2 + b \left(x - \frac{\varepsilon_k}{q_k^2}\right) + c ,\\
  \tilde{c} &= a \frac{\varepsilon_k^2}{q_{k}^2} - (2 a x + b) \varepsilon_k ,\\
  |\tilde{c}| &\leq |a| + |2ax + b|.
 \end{split}
 \]
 And $\tilde{c}$ can only take finitely many different values; for $\tilde{a}$ the proof is the same.
 Note that 
 \[
   \det \mat{cc}{a&b/2\\b/2&c} = \det \mat{cc}{\tilde{a} & \tilde{b}/2 \\ \tilde{b}/2 & \tilde{c}},
 \]
 since the matrices of the convergents are in $SL(2,\mathbb{Z})$. Therefore $\tilde{b}$ also takes finitely many different values.
 \end{proof}

 \change
 


Some dynamical properties of the Gauss Map reflect arithmetic properties of irrational numbers.  
The arithmetic property we recall is the notion of \emph{Irrationality Measure}. For a given irrational number $\alpha$, let $\mathcal{D} \subset \mathbb{R}^+$ be the set of numbers $\gamma$ such that there are only finitely many rational numbers $p/q$ satisfying the inequality \[ \left| \alpha - \frac{p}{q} \right| \leq \frac{1}{q^{2+\gamma}}. \]
Note that if $\gamma \in \mathcal{D}$ then $[\gamma, \infty) \subseteq \mathcal{D}$. The irrationality measure of $\alpha$ is defined as \[ \mu(\alpha) = \inf_{\gamma \in \mathcal{D}} \gamma,\] setting $\mu(\alpha) = \infty$ when $\mathcal{D}$ is empty.

The dynamical quantity related to the Irrationality Measure is the \emph{upper} Lyapunov Exponent for the Gauss Map.  
For an irrational number $\alpha \in [0,1]$ the \emph{upper} Lyapunov exponent is defined as \[ \lambda^{+}(\alpha) = \limsup_{n \rightarrow \infty} \frac{1}{n} \log \left|(G^n)' (\alpha)\right| = \limsup_{n \rightarrow \infty} \frac{1}{n} \sum_{j=0}^{n-1} \log \left|G'(G^j (\alpha))\right|, \] with the equivalent expressions \[ \lambda^{+} (\alpha) = - \limsup_{n \rightarrow \infty} \frac{1}{n} \log \left| \alpha - \frac{p_n}{q_n} \right| = \limsup_{n \rightarrow \infty} \frac{2}{n} \log q_n.\]

\begin{prop}
 If $\lambda^{+}(\alpha) < \infty$ then $\mu(\alpha) = 0$.  
\end{prop}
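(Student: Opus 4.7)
The natural line of attack is the contrapositive: assume $\mu(\alpha) > 0$ and derive $\lambda^{+}(\alpha) = \infty$. By the definition of $\mathcal{D}$, the hypothesis $\mu(\alpha) > 0$ supplies some $\gamma > 0$ and infinitely many rationals $p/q$ in lowest terms with $|\alpha - p/q| \leq 1/q^{2+\gamma}$. The first task is to argue that, once $q$ is large enough, every such approximation must in fact be a convergent $p_{n}/q_{n}$ of $\alpha$; this is the standard converse to the best-approximation property established earlier in the paper, since $1/q^{2+\gamma} < 1/(2q^{2})$ eventually forces $p/q$ to appear on the continued-fraction list. This yields an infinite subsequence of indices $(n_{k})$ with $|\alpha - p_{n_{k}}/q_{n_{k}}| \leq 1/q_{n_{k}}^{2+\gamma}$.

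Next I would plug this bound into the lower estimate from the Rate of Convergence proposition, $|\alpha - p_{n}/q_{n}| > 1/(2 q_{n+1}^{2})$. Combining the two gives $q_{n_{k}+1}^{2} > q_{n_{k}}^{2+\gamma}/2$, equivalently
\[
\log q_{n_{k}+1} > \bigl(1 + \tfrac{\gamma}{2}\bigr)\log q_{n_{k}} - \log\sqrt{2}.
\]
Since $\log q_{n}$ is non-decreasing and the indices $n_{k}$ are strictly increasing, iterating this multiplicative kick along the subsequence produces super-exponential growth along it: $\log q_{n_{k}} \geq C(1+\gamma/2)^{k}$ for some $C>0$ and all large $k$. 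This shows that good approximations at many indices force the denominator sequence to grow vastly faster than linearly.

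To translate this into a statement about the Lyapunov exponent, I would use the equivalent expression $\lambda^{+}(\alpha) = \limsup \frac{1}{n}(-\log|\alpha - p_{n}/q_{n}|)$. At the indices $n_{k}$ the hypothesis provides the direct estimate $-\log|\alpha - p_{n_{k}}/q_{n_{k}}|/n_{k} \geq (2+\gamma)\log q_{n_{k}}/n_{k}$, and the Fibonacci lower bound $q_{n} \geq F_{n}$ gives $\log q_{n}/n \geq \log\phi + o(1)$. Taken together these already force $\lambda^{+}(\alpha) \geq (2+\gamma)\log\phi$, which is a finite quantitative lower bound but not yet $+\infty$.

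The main obstacle — the step I expect to require the real work — is upgrading this fixed lower bound to actual divergence of $\limsup \tfrac{2}{n}\log q_{n}$. The super-exponential growth $\log q_{n_{k}} \geq C(1+\gamma/2)^{k}$ is a priori compatible with the indices $n_{k}$ themselves growing exponentially, in which case the ratios $\log q_{n_{k}}/n_{k}$ need not blow up. Closing this gap will likely demand finer information than what I have used so far: either a control on the spacing of the indices $n_{k}$ (dense enough that the $k$-exponential in the numerator outpaces the normalization by $n_{k}$), or an application of the $\limsup$ at the shifted indices $n_{k}+1$ where $q$ has just taken its large jump, exploiting both the upper approximation estimate and the lower bound $q_{n+1} \leq q_n \cdot q_{n+1}/q_n$ in a more refined way. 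This is where I expect the crux of the argument to sit.
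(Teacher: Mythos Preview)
The paper does not supply a proof of this proposition; it is stated as a fact with a pointer to outside references for details. So there is no argument in the text to compare your approach against.

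More to the point, the obstacle you flag in your final paragraph is not a technicality that finer bookkeeping will remove: the implication as written is \emph{false}. Define the partial quotients by $a_n = 1$ except along a sparse sequence of indices $n_k$ satisfying $n_{k+1} = n_k^{2}$, where one sets $a_{n_k} = q_{n_k - 1}$. At $m = n_k - 1$ this yields $q_{m+1} \geq q_m^{2}$, hence $|\alpha - p_m/q_m| < q_m^{-3}$ infinitely often and $\mu(\alpha) \geq 1$. On the other hand, writing $r_k = (\log q_{n_k})/n_k$ and $\phi = (1+\sqrt 5)/2$, the relation $\log q_{n_k} = 2\log q_{n_{k-1}} + 2(n_k - n_{k-1})\log\phi + O(1)$ together with $n_{k-1}/n_k = 1/n_{k-1} \to 0$ forces $r_k \to 2\log\phi$; between jumps the ratio $(\log q_n)/n$ is a convex combination of $r_k$ and $\log\phi$ and only drifts downward. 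Hence $\lambda^{+}(\alpha) = \limsup_n (2/n)\log q_n = 4\log\phi < \infty$ while $\mu(\alpha) > 0$.

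Your argument is sound up to and including the bound $\lambda^{+}(\alpha) \geq (2+\gamma)\log\phi$ whenever $\mu(\alpha)\geq\gamma$, and that is essentially the most one can extract. The correct statement in this direction is that $\mu(\alpha) = \infty$ implies $\lambda^{+}(\alpha) = \infty$ (let $\gamma \to \infty$ in your inequality), equivalently $\lambda^{+}(\alpha) < \infty \Rightarrow \mu(\alpha) < \infty$. The ``crux'' you anticipated is precisely where the stated claim breaks.
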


The relation between arithmetics and dynamics comes from the fact that the number $\lambda^{+}(\alpha)/2$ is an upper bound for the exponential growth rate of the sequence $q_n$. 
It is known that irrational numbers of bounded type, i.e. numbers whose continued fraction expansion has bounded coefficients, have irrationality measure zero. 
We also get the following dynamical fact.
For more details on these subjects see, for example, \cite{Hernandez} and \cite{Pollicott}. 

\begin{prop}
 If $\alpha$ is of bounded type then $\lambda^{+}(\alpha) < \infty$.
\end{prop}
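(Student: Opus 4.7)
The plan is to exploit the recurrence for the convergent denominators together with the last of the three equivalent formulas for $\lambda^{+}$ stated in the excerpt, namely
\[ \lambda^{+}(\alpha) = \limsup_{n \to \infty} \frac{2}{n} \log q_n. \]
So it suffices to bound the exponential growth rate of $q_n$ in terms of the bound on the partial quotients.

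First I would recall, from the proof of the proposition identifying $\mathbf{I}_{\mathbf{v}, n}$ with the matrix of consecutive convergents, that the denominators satisfy the recurrence $q_n = a_n q_{n-1} + q_{n-2}$, with $q_0 = 1$ and $q_1 = a_1$. Assuming $\alpha$ is of bounded type, fix $M \geq 1$ with $a_n \leq M$ for every $n \geq 1$. Using the monotonicity $q_{n-2} \leq q_{n-1}$ (immediate from the recurrence and positivity of the $a_k$), I get the one-step bound
\[ q_n \;=\; a_n q_{n-1} + q_{n-2} \;\leq\; M q_{n-1} + q_{n-1} \;=\; (M+1) q_{n-1}. \]

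Iterating this inequality starting from $q_1 = a_1 \leq M \leq M+1$ yields $q_n \leq (M+1)^n$ for all $n \geq 1$. Substituting into the limsup formula,
\[ \lambda^{+}(\alpha) \;=\; \limsup_{n \to \infty} \frac{2}{n} \log q_n \;\leq\; \limsup_{n \to \infty} \frac{2}{n} \cdot n \log(M+1) \;=\; 2 \log(M+1) \;<\; \infty, \]
which is exactly the claim.

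There is no real obstacle here: the only subtlety is noticing that monotonicity of $q_n$ lets one absorb the $q_{n-2}$ term into a clean geometric bound of ratio $M+1$, rather than having to analyze the full second-order recurrence. The implicit constant $2\log(M+1)$ is of course not sharp (the optimal rate comes from the dominant eigenvalue of the companion matrix of the recurrence), but sharpness is not needed for finiteness.
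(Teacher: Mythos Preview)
Your argument is correct: bounding $q_n \leq (M+1)q_{n-1}$ via monotonicity and the recurrence, then feeding this into $\lambda^{+}(\alpha) = \limsup_n \tfrac{2}{n}\log q_n$, gives the finiteness cleanly. There is nothing to fix.

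As for comparison: the paper does not actually supply a proof of this proposition. It is stated as a fact, with a pointer to \cite{Hernandez} and \cite{Pollicott} for details, and the surrounding text only remarks informally that $\lambda^{+}(\alpha)/2$ is an upper bound for the exponential growth rate of $q_n$. Your write-up is precisely the elementary argument that makes that remark rigorous, so it is entirely in the spirit of what the paper gestures at, but there is no in-paper proof to weigh it against.
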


The result known as the Thue-Siegel-Roth Theorem states that any algebraic irrational number has Irrationality Measure zero. 
There is no algebraic irrational number of degree $\geq 3$ for which its continued fraction expansion can be described completely, it is not even known if they are examples of bounded type. 
 
\begin{question}
 Do algebraic irrational numbers have finite upper Lyapunov Exponent for the Gauss Map?
\end{question}

Let $\alpha \in \mathbb{R}$ be irrational and ${p_n}/{q_n} \rightarrow \alpha$ be the approximation by convergents of the continued fractions. Suppose ${a_n}/{b_n} \rightarrow \alpha$ is an approximation such that \[\left| \alpha - \frac{a_n}{b_n} \right| \leq \left| \alpha - \frac{p_n}{q_n} \right|\] for all $n$.
Since the approximation by convergents is the best, we have that $b_n \geq q_n$ for all $n$. 
If the quantity \[ \limsup_{n \rightarrow \infty} \frac{1}{n} \log b_n \] is finite, then we can bound \[\lambda^{+} (\alpha) = \limsup_{n \rightarrow \infty} \frac{2}{n} \log q_n \leq \limsup_{n \rightarrow \infty} \frac{2}{n} \log b_n < \infty.\]
Hopefully, the generalization of the Gauss Map in the following sections will give such approximations for $\alpha$ algebraic irrational.

\section{The Multidimensional M\"onkemeyer Map}


In \cite{Panti} the multidimensional M\"onkemeyer Map is constructed as a generalization of the Farey Map and it gives rise to a dynamical construction of the corresponding generalized Question Mark Function, which conjugates the multidimensional M\"onkemeyer Map to a multidimensional Tent Map. 
We follow that construction. 
Consider the following matrices and simplex 
\[A = \mat{ccccccc}{1 & 0 & 0 & \cdots & 0 & 0 & 0 \\
                    1 & 0 & 0 & \cdots & 0 &-1 & 0 \\
                    0 & 1 & 0 & \cdots & 0 &-1 & 0 \\
                    \vdots & \vdots & \vdots & \ddots & \vdots & \vdots & \vdots \\
                    0 & 0 & 0 & \cdots & 0 &-1 & 0 \\
                    0 & 0 & 0 & \cdots & 1 &-1 & 0\\
                    0 & 0 & 0 & \cdots & 0 &-1 & 1 } \hspace*{2em}                    
  B = \mat{ccccccc}{0 & 0 & 0 & \cdots & 0 &-1 & 1 \\
                    1 & 0 & 0 & \cdots & 0 &-1 & 0 \\
                    0 & 1 & 0 & \cdots & 0 &-1 & 0 \\
                    \vdots & \vdots & \vdots & \ddots & \vdots & \vdots & \vdots \\
                    0 & 0 & 0 & \cdots & 0 &-1 & 0 \\
                    0 & 0 & 0 & \cdots & 1 &-1 & 0\\
                    1 & 0 & 0 & \cdots & 0 & 0 & 0 }\] 
\[V = \Mat{ccccccc}{0 & 1 & 1 & \cdots & 1 & 1 & 1 \\
                    0 & 0 & 1 & \cdots & 1 & 1 & 1 \\
                    0 & 0 & 0 & \cdots & 1 & 1 & 1 \\
                    \vdots & \vdots & \vdots & \ddots & \vdots & \vdots & \vdots \\
                    0 & 0 & 0 & \cdots & 0 & 1 & 1 \\
                    0 & 0 & 0 & \cdots & 0 & 0 & 1 \\
                    1 & 1 & 1 & \cdots & 1 & 1 & 1 }\]                
The multidimensional M\"onkemeyer Map is given as
\begin{equation*}
  M (\mathbf{v}) = 
   \left\lbrace
   \begin{array}{lcc}
   A \mathbf{v} & \text{ if } & \mathbf{v} \in A^{-1}V \\ 
   B \mathbf{v} & \text{ if } & \mathbf{v} \in B^{-1}V
   \end{array}
   \right.
\end{equation*}

We will consider again the first return map to the simplex $B^{-1} V$ and call that a \emph{Multidimensional Gauss Map}.
In the following sections we will explore the cases $n = 2, 3$, and leave the general case for later. 
In this setting, the group generated by the matrices $A$ and $B$ will play the role of $SL(2, \mathbb{Z})$.
We will refer to this group as the \emph{$n$-th M\"onkemeyer Group} $\mathcal{M}_n$.
From the previous section we have $\mathcal{M}_1 = SL(2, \mathbb{Z})$ and from the definition we have that $\mathcal{M}_n \subseteq SL(n+1, \mathbb{Z})$.

\section{The 2-dimensional case}

The multidimensional version of continued fractions as a dynamical system on a triangle has many different instances, see for example \cite{Beaver}, \cite{Dasaratha} and \cite{Garrity}. 

Consider the following matrices and simplex
\[ A = \mat{ccc}{1 & 0 & 0 \\ 1 & -1 & 0 \\ 0 & -1 & 1}, \hspace*{2em}
   B = \mat{ccc}{0 &-1 & 1 \\ 1 & -1 & 0 \\ 1 & 0 & 0}, \hspace*{2em}
   V = \Mat{ccc}{0 & 1 & 1 \\ 0 & 0 & 1 \\ 1 & 1 & 1 }.\]
Note that $A, B \in SL(3, \mathbb{Z})$ and the simplex represented by $V$ consists of all ordered pairs of numbers in the unit interval, i.e. \[ V = \left\lbrace \Mat{c}{x\\y\\1} \in \RP{2} : 0 \leq y \leq x  \leq 1 \right\rbrace .\]
The M\"onkemeyer Map $M: V \rightarrow V$ 
is given in homogeneous coordinates as 
\[ M (\mathbf{v}) = \left\lbrace \begin{array}{lcc} 
   A \mathbf{v} & \text{ if } & \mathbf{v} \in A^{-1}V \\ 
   B \mathbf{v} & \text{ if } & \mathbf{v} \in B^{-1}V \end{array} \right. ,\]
and when projected to $\mathbb{R}^2$ it takes the form 
\[ M(x, y) = \left\lbrace \begin{array}{lcc} 
        \left( \displaystyle\frac{x}{1-y}, \frac{x-y}{1-y} \right) & \text{ if } & x+y \leq 1 \\ \\
        \left( \displaystyle\frac{1-y}{x}, \frac{x-y}{x} \right) & \text{ if } & x+y \geq 1 \end{array} \right. .\] 
Note that $V = A^{-1}V \cup B^{-1}V$ and this sub-simplexes are the following triangles (See Figure \ref{triangle_2d}) \[ A^{-1} V = \Mat{ccc}{0&1&1 \\ 0&1&0 \\ 1&2&1} \hspace*{2em} B^{-1} V = \Mat{ccc}{1&1&1 \\ 1&1&0 \\ 1&2&1} .\]

\begin{figure}
\begin{center}
 \begin{tikzpicture}[scale=5.0]
 
  \draw[thick] (0,0) --(1,0) --(1,1) --(0,0);
  \draw[thick] (1,0) --(0.5,0.5);
  
  \node[black, left] at (0,0) {$\Mat{c}{0 \\ 0 \\ 1}$};
  \node[black, right] at (1,1) {$\Mat{c}{1 \\ 1 \\ 1}$};
  \node[black, right] at (1,0) {$\Mat{c}{1 \\ 0 \\ 1}$};
  \node[black, above left] at (0.5,0.5) {$\Mat{c}{1 \\ 1 \\ 2}$};
  
  \node[black] at (0.5, 0.2) {$A^{-1}V$};
  \node[black] at (0.8, 0.5) {$B^{-1}V$};

 \end{tikzpicture}
 \caption{Partition of the base simplex for the 2-dimensional M\"onkemeyer Map}
 \label{triangle_2d}
\end{center}
\end{figure}

Consider again the first return map to the simplex $B^{-1}V$: 
For a point $\mathbf{v} \in V$ let $k_{\mathbf{v}}$ be the least non-negative integer such that $F^{k_{\mathbf{v}}} (\mathbf{v}) \in B^{-1}V$. 
The first return map is defined as
\[ G (\mathbf{v}) = \left\lbrace \begin{array}{lcc} 
   B A^{k_{\mathbf{v}}} \mathbf{v} & \text{ if } & \mathbf{v} \neq \mathbf{0} \\ 
   \mathbf{0} & \text{ if } & \mathbf{v} = \mathbf{0} \end{array} \right. ,
   \hspace*{2em} \text{where } \hspace*{2em} \mathbf{0} = \Mat{c}{0\\0\\1} .\]
Again, the matrix $A$ fixes $\mathbf{0}$ and any other point in $A^{-1}V$ is eventually mapped into $B^{-1}V$, so the map $G$ is defined piecewise on $V$.
We can retrieve the domains of definition of $G$ in the same way as in the 1-dimensional case, but we don't have the direct formula for the powers of $A$. 
Nonetheless, note that for all $n \in \mathbb{Z}$ 
\[ A^{2n} = \mat{ccc}{1 & 0 & 0 \\ 0 & 1 & 0 \\ -n & 0 & 1},\]
For $n \geq 1$ let $\mathbf{A}_n = B A^{2n -1}$ and $\mathbf{B}_n = B A^{2n-2}$. 
We have that \[ \mathbf{A}_n = B A^{-1} A^{2n} = 
\mat{ccc}{-n & 0 &1 \\ 0 & 1 & 0 \\ 1 & 0 & 0},\]
\[ \mathbf{B}_n = B A^{2(n-1)} = 
\mat{ccc}{1-n & -1 &1 \\ 1 & -1 & 0 \\ 1 & 0 & 0}.\]
The set of points that take $2n - 1$ iterations of $M$ to get to the simplex $B^{-1}V$ are in the simplex $\mathbb{A}_n= \mathbf{A}_n^{-1} V$ and the set of points that take $2n - 2$ iterations of $M$ to get to the simplex $B^{-1}V$ are in the simplex $\mathbb{B}_n= \mathbf{B}_n^{-1} V$. 
These simplexes are of the form: (See Figure \ref{gauss_2d})
\[ \mathbb{A}_n = \mathbf{A}_n^{-1} V = \mat{ccc}{0 & 0 & 1 \\ 0 & 1 & 0 \\ 1 & 0 & n} V = \Mat{ccc}{1 & 1 & 1 \\ 0 & 0 & 1 \\ n & n+1 & n+1}, \]
\[ \mathbb{B}_n = \mathbf{B}_n^{-1} V = \mat{ccc}{0 & 0 & 1 \\ 0 & -1 & 1 \\ 1 & -1 & n} V = \Mat{ccc}{1 & 1 & 1 \\ 1 & 1 & 0 \\ n & n+1 & n}. \]
 
\begin{figure}
\begin{center}
 \begin{tikzpicture}[scale=6.0]
  
  \draw[thick] (1, 0) --(0.5, 0.5) --(0.5, 0);
  \draw[thick] (0.5, 0) --(0.33333333, 0.33333333) --(0.33333333, 0);
  \draw[black!90!white, thick, dotted] (0.33333333, 0) --(0.25, 0.25) --(0.25, 0);
  \draw[black!80!white, thick, dotted] (0.25, 0) --(0.2, 0.2) --(0.2, 0);  
  \draw[black!70!white, thick, dotted] (0.2, 0) --(0.16666666, 0.16666666) --(0.16666666, 0);  
  \draw[black!60!white, thick, dotted] (0.16666666, 0) --(0.14285714, 0.14285714) --(0.14285714, 0);  
  \draw[black!50!white, thick, dotted] (0.14285714, 0) --(0.125, 0.125) --(0.125, 0);  
  \draw[black!40!white, thick, dotted] (0.125, 0) --(0.11111111, 0.11111111) --(0.11111111, 0);  
  \draw[black!25!white, thick, dotted] (0.11111111, 0) --(0.1, 0.1) --(0.1, 0);    
  \draw[black!10!white, thick, dotted] (0.1, 0) --(0.09090909, 0.09090909) --(0.09090909, 0);  
  
  \draw[thick] (0,0) --(1,0) --(1,1) --(0,0);
  
  \node[black, left] at (0,0) {$\Mat{c}{0 \\ 0 \\ 1}$};
  \node[black, right] at (1,1) {$\Mat{c}{1 \\ 1 \\ 1}$};
  
  \node[black, right] at (1,0) {$\Mat{c}{1 \\ 0 \\ 1}$};
  \node[black, below] at (0.53,0) {$\Mat{c}{1 \\ 0 \\ 2}$};
  \node[black, below] at (0.33333333,0) {$\Mat{c}{1 \\ 0 \\ 3}$};
  \node[black, above left] at (0.52,0.52) {$\Mat{c}{1 \\ 1 \\ 2}$};
  \node[black, above left] at (0.33333333,0.33333333) {$\Mat{c}{1 \\ 1 \\ 3}$};
  
  \node[black] at (0.65, 0.15) {$\mathbb{A}_1$};
  \node[black] at (0.4, 0.1) {$\mathbb{A}_2$};
  
  \node[black] at (0.8, 0.5) {$\mathbb{B}_1$};
  \node[black] at (0.43, 0.3) {$\mathbb{B}_2$};

 \end{tikzpicture}
 \caption{Partition of the base simplex for the 2-dimensional Gauss Map}
 \label{gauss_2d}
\end{center}
\end{figure}
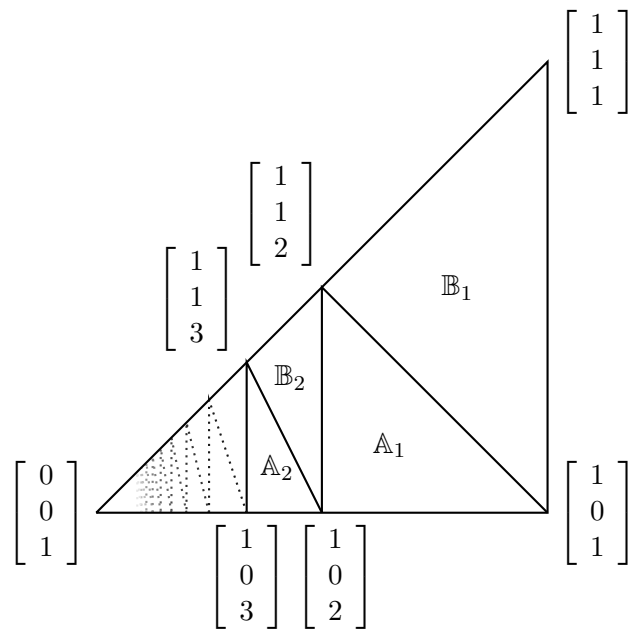

The map $G$ can be written as
\[ G (\mathbf{v}) = \left\lbrace \begin{array}{lcc} 
   \mathbf{A}_n \mathbf{v} & \text{ if } & \mathbf{v} \in \mathbb{A}_n \\ 
   \mathbf{B}_n \mathbf{v} & \text{ if } & \mathbf{v} \in \mathbb{B}_n \\ 
   \mathbf{0} & \text{ if } & \mathbf{v} = \mathbf{0} \end{array} \right. .\] 
and when projected to $\mathbb{R}^2$, the map $G$ takes the form 
\[ G (x, y) = \left\lbrace \begin{array}{lcc} 
   \left( \displaystyle\frac{1}{x} - n, \frac{y}{x}\right) & \text{ on } & \mathbb{A}_n \\ \\
   \left( \displaystyle\frac{1-y}{x} - n + 1, \frac{x-y}{x}\right) & \text{ on } & \mathbb{B}_n \\ \\
   (0,0) & \text{ if } & (x,y) = (0,0) \end{array} \right.\]
and it can be reduced to a formula more similar to the 1-dimensional Gauss Map
\[ G (x, y) = \left\lbrace \begin{array}{lcc} 
   \left( \displaystyle\frac{1}{x} \mod 1, \frac{y}{x}\right) & \text{ on } & \mathbb{A}_n \\ \\
   \left( \displaystyle\frac{1-y}{x} \mod 1, 1-\frac{y}{x}\right) & \text{ on } & \mathbb{B}_n \\ \\
   (0,0) & \text{ if } & (x,y) = (0,0) \end{array} \right. .\] 
   
\change

\begin{prop}
 Points of the triangle with rational coordinates are preimages of zero.
\end{prop}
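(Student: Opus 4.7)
The plan is to proceed by strong induction on the denominator of the rational point. Given a rational $\mathbf{v} \in V$, represent it by its primitive homogeneous integer vector, with entries $p_1, p_2, q$ satisfying $\gcd(p_1, p_2, q) = 1$ and $0 \leq p_2 \leq p_1 \leq q$. Since the matrices $\mathbf{A}_n$ and $\mathbf{B}_n$ are unimodular integer matrices, primitivity is preserved along the $G$-orbit, so this representation is canonical at every step.

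The key observation is that the last row of every $\mathbf{A}_n$ and every $\mathbf{B}_n$ equals $(1,0,0)$, so the third coordinate of $G(\mathbf{v})$ is always exactly the first coordinate $p_1$ of $\mathbf{v}$. Because $\mathbf{v} \in V$ forces $p_1 \leq q$, the denominator is weakly decreasing under $G$ and strictly decreasing whenever $p_1 < q$, i.e., whenever $x \neq 1$. The base case $q = 1$ reduces to the three primitive vectors $\mathbf{0} = (0,0,1)$, $(1,0,1)$, and $(1,1,1)$; a direct matrix computation with $B$ produces the chain $(1,0) \mapsto (1,1) \mapsto \mathbf{0}$, so all three are preimages of zero.

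For the inductive step with $q \geq 2$, the only subtle case is $p_1 = q$, where $\mathbf{v}$ lies on the edge $x = 1$. Primitivity then forces $\gcd(p_2, q) = 1$ and hence $0 < p_2 < q$, and such a point lies in $\mathbb{B}_1$. Computing $\mathbf{B}_1 (q, p_2, q) = (q - p_2, q - p_2, q)$ shows the denominator is momentarily unchanged but the first coordinate becomes $q - p_2 < q$; a second application of $G$ therefore yields a primitive vector of denominator $q - p_2 < q$, and the inductive hypothesis applies. The main obstacle is precisely this boundary behavior: the denominator does not always drop in a single iteration, and one must verify that two iterations suffice, which rests on observing that every point of the form $(1, y)$ with $0 < y < 1$ has $(1-y)/x = 1-y \in (0,1)$ and therefore belongs to $\mathbb{B}_1$.
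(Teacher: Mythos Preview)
Your argument is correct and rests on the same engine as the paper's: since the bottom row of every $\mathbf{A}_n$ and $\mathbf{B}_n$ is $(1,0,0)$, the third homogeneous coordinate after one step of $G$ equals the previous first coordinate, which is no larger. The paper packages this as infinite descent---assume the orbit remains in the interior of $V$ forever, get a strictly decreasing sequence of positive integers, contradiction---and then, once the orbit is forced onto the boundary, reduces to the one-dimensional Gauss map on the base edge $\mathsf{A}$ and invokes the already-proved one-dimensional result. Your strong induction is more self-contained: you never appeal to the one-dimensional fact, and you handle the one situation where the denominator fails to drop (the front edge $x=1$) by an explicit two-step computation. The cost is that extra boundary case; the gain is that the argument stands on its own.

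One small correction in your base case: under the paper's convention $\mathbb{B}_n$ requires $y > 1-nx$, which fails at $(1,0)$, so the vertex $(1,0,1)$ actually lies in $\mathbb{A}_1$ rather than $\mathbb{B}_1$. Hence $G(1,0,1)=\mathbf{A}_1(1,0,1)^{T}=(0,0,1)^{T}$ directly, not via $(1,1,1)$. This does not affect the rest of your argument.
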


\begin{proof}
 A point with rational coordinates is represented projectively by a point $\mathbf{r} \in V$ with integer entries, namely \[ \mathbf{r} = \Mat{c}{p \\ q \\ s}\] and it satisfies the inequalities $q \leq p \leq s$, the first comes from the ordering of the coordinates and the second from the coordinates being in $[0,1]$.
 The first inequality becomes equality when the point is in the diagonal edge and the second does it when the point is in the vertical edge.
 
 Suppose $\mathbf{r}$ is an interior point of the triangle, then the inequalities are strict. 
 Note that since $G$ preserves the triangle, it also preserves the inequalities. 
 Also, since the defining matrices $\mathbf{A}_n$ and $\mathbf{B}_n$ are of integer entries, they also keep the entries of $\mathbf{r}$ integer, in fact, \[ G \left( \Mat{c}{p \\ q \\ s} \right) = \left\lbrace \begin{array}{crl} 
   \Mat{c}{s - np \\ q \\ p} & \text{ on } & \mathbb{A}_n \\ 
   \Mat{c}{s - q + (1-n)p\\ p-q \\ p} & \text{ on } & \mathbb{B}_n \end{array} \right. . \]
 
 Now suppose that $G^{k} (\mathbf{r})$ is a interior point for all $k \geq 0$. 
 Let $\lbrace p_k \rbrace$, $\lbrace q_k \rbrace$ and $\lbrace s_k \rbrace$ be the integer sequences given by \[ G^{k} \left( \Mat{c}{p \\ q \\ s} \right) = \Mat{c}{p_k \\ q_k \\ s_k} \]
 by the previous statements we have that $s_0 > p_0 = s_1 > p_1 = s_2 > p_2 \cdots$, so the sequences are strictly decreasing, and therefore converging to zero, which is a contradiction.
 The lowest sequence $\lbrace q_k \rbrace$ would be the first to get to zero, meaning that $\mathbf{r}$ is mapped to the base of the triangle, where the sequences are no longer strictly decreasing. 
 In the base of the triangle the dynamics are, when projected to $\mathbb{R}^2$, $G(p/s, 0) = (s/p \mod 1, 0)$, i.e. $G$ is the 1-dimensional Gauss Map on the first coordinate.
 By a previous result, the rational points are mapped to zero. 
\end{proof}

The group $\mathcal{M}_2$ is generated by the matrices $A$ and $B$, but it is also generated by the two infinite families $\lbrace \mathbf{A}_n \rbrace $ and $\lbrace \mathbf{B}_n \rbrace$.
Note that the family $\lbrace \mathbf{A}_n \rbrace$ generates a subgroup homomorphic to $SL(2, \mathbb{Z})$, and the homomorphism is given by \[ \mat{cc}{a & b \\ c & d} \mapsto \mat{ccc}{a & 0 & b \\ 0 & 1 & 0 \\ c & 0 & d}.\]

As in the 1-dimensional case, the simplex $V$ represents all pairs of real numbers, up to integer parts and ordering. 
A property that would be desirable for the group $\mathcal{M}_2$ is that any point in $\RP{2}$ is related to a point in the simplex by the action of $\mathcal{M}_2$. 
As we have seen, we find a \emph{copy} of $SL(2, \mathbb{Z})$ in $\mathcal{M}_2$ and it comes with the translations \[\mat{ccc}{1 & 0 & -n \\ 0 & 1 & 0 \\ 0 & 0 & 1}\] which can take away the integral part, but only on the first coordinate.
There might be another subgroup taking care of the \emph{ordering} of the coordinates.

\begin{question}
 Is there an element in $\mathcal{M}_2$ which can permute the first and second coordinates of a point in $\RP{2}$?
\end{question}

In the 1-dimensional case the permutation part of the group $\mathcal{M}_1 = SL(2, \mathbb{Z})$ is trivial (since 1-tuples are trivially ordered) and the inversion and translations generate the whole group. 

\begin{conjecture}
 The group $\mathcal{M}_n$ might be generated by an inversion (for dynamics), translations (for no integer parts) and permutations (for ordered pairs).
\end{conjecture}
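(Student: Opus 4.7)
The plan is to mirror the one-dimensional argument that $\mathcal{M}_1 = \langle A, B \rangle = \langle S, T \rangle = SL(2,\mathbb{Z})$, by producing explicit candidates for an inversion $\mathcal{S}$, a family of translations $\mathcal{T}_1,\ldots,\mathcal{T}_n$, and coordinate transpositions $\sigma_{i,i+1}$ inside $\mathcal{M}_n$, and then recovering the defining matrices $A$ and $B$ as words in these elements. Since $\mathcal{M}_n = \langle A, B \rangle$ by definition, any such expression for $A$ and $B$ in terms of the three families proves the conjecture.

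First, I would construct the inversion and at least one translation. The family $\{\mathbf{A}_k\}_{k \geq 1}$ has already been observed to generate a subgroup of $\mathcal{M}_n$ homomorphic to $SL(2,\mathbb{Z})$, embedded through the block of the first and last homogeneous coordinates; inside that copy the one-dimensional identities $T = BA^{-1}B^{-1}$ and $S = (TAT)^{-1}$ immediately yield an inversion $\mathcal{S}$ and a translation $\mathcal{T}_1$ acting on the first coordinate of the canonical chart. If the permutations are available, the remaining translations come for free by conjugation: $\mathcal{T}_i = \sigma_{1,i}\, \mathcal{T}_1\, \sigma_{1,i}^{-1}$.

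The main obstacle, already raised as an open question immediately before the conjecture, is producing the transpositions $\sigma_{i,i+1}$ as words in $A$ and $B$. I would approach this in two complementary ways. The first is computational: enumerate words in $A^{\pm 1}, B^{\pm 1}$ up to some length bound and test each for the matrix shape of a signed permutation; for $n = 2$ only the single transposition $\sigma_{1,2}$ is needed, so a finite search should either settle the preceding question or at least exhibit a short witness. The second is structural: $B$ already has an almost-cyclic form (its first and last rows are shifts of permutation rows together with a single column of $-1$'s), so one should seek products $B^{a_1} A^{b_1} B^{a_2} \cdots$ designed to cancel the $-1$ column and leave a genuine signed permutation. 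Once any single adjacent transposition is produced, conjugating it by $\mathcal{S}$ or by other block copies of $SL(2,\mathbb{Z})$ sitting inside $\mathcal{M}_n$ should propagate it to the remaining adjacent transpositions, which in turn generate all of $S_{n+1}$.

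The final step is a direct verification: with $\mathcal{S}$, the $\mathcal{T}_i$, and the $\sigma_{i,i+1}$ in hand, write $A$ and $B$ as explicit short words in them. Since both matrices consist of a single $-1$ column together with a permutation pattern, they should decompose as a short product of one translation and one permutation, in the same spirit as $T = B A^{-1} B^{-1}$ one dimension below. I expect this last step to be routine bookkeeping; the real content of the conjecture, and essentially all of its difficulty, lies in locating the permutations.
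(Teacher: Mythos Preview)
The statement you are attempting to prove is labeled a \emph{Conjecture} in the paper, and the paper provides no proof for it. It is stated immediately after the open \emph{Question} of whether $\mathcal{M}_2$ even contains a coordinate permutation, and is followed by the further open \emph{Question} of whether $\mathcal{M}_2$ equals $SL(3,\mathbb{Z})$. There is therefore no paper proof to compare your proposal against.

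As for the proposal itself, it is a research plan rather than a proof, and you are candid about this: you explicitly identify the production of the transpositions $\sigma_{i,i+1}$ as words in $A$ and $B$ as ``the main obstacle'' and note that this is precisely the open question posed in the paper. Everything else in your outline (extracting $\mathcal{S}$ and $\mathcal{T}_1$ from the embedded copy of $SL(2,\mathbb{Z})$, conjugating to get the remaining $\mathcal{T}_i$, and then expressing $A$ and $B$ back in terms of the three families) is contingent on first resolving that open question. The computational and structural approaches you sketch for finding a permutation are reasonable heuristics, but neither is carried out, and neither is guaranteed to succeed; in particular, the paper gives no indication that a signed permutation lies in $\mathcal{M}_2$ at all, so a bounded word search may simply return nothing. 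Until that step is settled, what you have is a plausible program, not a proof.
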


\begin{question}
 Is $\mathcal{M}_2$ the full $SL(3, \mathbb{Z})$?
\end{question}


Now consider the edges of the triangle, namely
\[
 \mathsf{A} = \Mat{cc}{0&1\\0&0\\1&1}, \mathsf{B} = \Mat{cc}{0&1\\0&1\\1&1} \text{ and } \mathsf{F} = \Mat{cc}{1&1\\0&1\\1&1}.
\]
The frontal edge $\mathsf{F}$ is contained in the piece $\mathbb{B}_1$ and is mapped in reversed order to the edge $\mathsf{B}$, i.e., \[G \Mat{c}{1\\x\\1} = \Mat{c}{1-x\\1-x\\1}, \] and when projected it takes the form $G(1,x) = (1-x, 1-x)$.

The base of the triangle, the edge $\mathsf{A}$, is contained in the pieces $\mathbb{A}_n$ and it is mapped to itself as a 1-dimensional Gauss Map, i.e., \[G \Mat{c}{x\\0\\1} = \Mat{c}{1-nx\\0\\x}, \] and when projected it takes the form $G(x,0) = (1/x \mod 1 , 0)$.

The diagonal edge $\mathsf{B}$, is contained in the pieces $\mathbb{B}_n$ and it is mapped to the edge $\mathsf{A}$ as a 1-dimensional Gauss Map, i.e., \[G \Mat{c}{x\\x\\1} = \Mat{c}{1-nx\\0\\x}, \] and when projected it takes the form $G(x,x) = (1/x \mod 1 , 0)$.

As we have seen, the dynamics on the \emph{base} of the triangle are the same as the 1-dimensional case, but there is more to it: 
Consider the full shift on the alphabet $\lbrace \mathbb{A}_n, \mathbb{B}_n : n \geq 1\rbrace$, this is conjugated to $G$ via itineraries. This conjugation is discussed in \cite{Panti} in terms of the map $M$. 

As for the 1-dimensional case, in order to pick a preferred itinerary for the preimages of edges and vertices of the triangle, we will take the convention 
\[ \mathbb{A}_n = \left\lbrace \Mat{c}{x \\ y \\ 1} \in \RP{2} : \frac{1}{n+1} < x \leq \frac{1}{n} \text{ and } 0 \leq y \leq 1 - n x \right\rbrace, \]
\[ \mathbb{B}_n = \left\lbrace \Mat{c}{x \\ y \\ 1} \in \RP{2} : \frac{1}{n+1} \leq x \leq \frac{1}{n} \text{ and } 1 - n x < y \leq x \right\rbrace. \]
Note that with this convention there are no defined preimages of the frontal edge $\mathsf{F}$.

Let $\mathbf{v} \in V$ be a non-rational point with itinerary $\mathbb{X}_{a_1}, \mathbb{X}_{a_2}, ...$, where $\mathbb{X}$ can be $\mathbb{A}$ or $\mathbb{B}$ and $a_k > 0$.
Again, for each $n \geq 1$ let $\mathbf{I}_{\mathbf{v},n} = \left( \mathbf{X}_{a_1}^{-1} \mathbf{X}_{a_2}^{-1} \cdots \mathbf{X}_{a_n}^{-1} \right)$, where $\mathbf{X}$ is $\mathbf{A}$ or $\mathbf{B}$ as indicated by the itinerary. 
The point $\mathbf{v}$ is in the simplex $\mathbb{I}_{n}(\mathbf{v}) = \mathbf{I}_{\mathbf{v},n} V$.
We have by construction that $\mathbb{I}_{n+1}(\mathbf{v}) \subset \mathbb{I}_{n}(\mathbf{v})$ and by the uniqueness of itineraries (See \cite{Panti}) that $\mathbf{v} = \bigcap_{n \geq 1} \mathbb{I}_{n}(\mathbf{v})$.
Again, we call the sequence $\mathbb{I}_{n}(\mathbf{v})$ the approximating simplexes of $\mathbf{v}$.

\begin{prop}
 A non-rational point $\mathbf{v} \in V$ has an itinerary consisting only of symbols of type $\mathbb{A}_n$ if and only if it is a point in the base of the triangle.
 Moreover, if the itinerary is $\mathbb{A}_{a_1}, \mathbb{A}_{a_2}, ...$ then the irrational number on the first coordinate is given in continued fractions as $[0: a_1, a_2, ...]$.
\end{prop}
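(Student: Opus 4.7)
The plan is to exploit a block structure: each matrix $\mathbf{A}_n^{-1}$ fixes the middle basis vector, so products of them act on the ``base'' direction and the ``height'' direction independently, reducing everything to the one-dimensional case already analyzed in the previous section.

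First I would observe that
\[ \mathbf{A}_n^{-1} = \mat{ccc}{0 & 0 & 1 \\ 0 & 1 & 0 \\ 1 & 0 & n} \]
has a block diagonal structure: the middle row and middle column are $(0,1,0)$, and the remaining $2\times 2$ corner is exactly the matrix $\mathbf{A}_n^{-1}$ of the one-dimensional Gauss map. Hence for an itinerary $\mathbb{A}_{a_1}, \mathbb{A}_{a_2}, \ldots$, the approximating matrix has the form
\[ \mathbf{I}_{\mathbf{v}, n} = \mat{ccc}{p_{n-1} & 0 & p_n \\ 0 & 1 & 0 \\ q_{n-1} & 0 & q_n}, \]
where $p_k/q_k$ are the convergents of $[0: a_1, a_2, \ldots]$, by induction and the one-dimensional result already proved.

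Next I would compute $\mathbb{I}_n(\mathbf{v}) = \mathbf{I}_{\mathbf{v},n} V$ explicitly; its columns are
\[ \Mat{c}{p_n \\ 0 \\ q_n}, \quad \Mat{c}{p_{n-1}+p_n \\ 0 \\ q_{n-1}+q_n}, \quad \Mat{c}{p_{n-1}+p_n \\ 1 \\ q_{n-1}+q_n}. \]
Projecting to $\mathbb{R}^2$, the three vertices are $(p_n/q_n, 0)$, $((p_{n-1}+p_n)/(q_{n-1}+q_n), 0)$ and $((p_{n-1}+p_n)/(q_{n-1}+q_n), 1/(q_{n-1}+q_n))$. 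Any point in this triangle has $y$-coordinate at most $1/(q_{n-1}+q_n)$. Since $q_n \to \infty$ (the convergents' denominators are strictly increasing beyond the first step), taking the intersection $\mathbf{v} = \bigcap_n \mathbb{I}_n(\mathbf{v})$ forces $y = 0$, so $\mathbf{v}$ lies on the base edge $\mathsf{A}$. Moreover, the first two vertices converge to the point $[0:a_1,a_2,\ldots]$ on the first coordinate, by the one-dimensional convergence.

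For the converse, if $\mathbf{v} = (x, 0, 1)$ with $x$ irrational, then the explicit formula $G(x,0) = (1/x \bmod 1, 0)$ shows that $\mathsf{A}$ is invariant under $G$, and with the convention $1/(n+1) < x \leq 1/n$ the point lies in $\mathbb{A}_n$ since $0 = y \leq 1 - nx$. Hence every iterate stays on $\mathsf{A}$ and the itinerary uses only symbols of type $\mathbb{A}$; the sequence $(a_k)$ obtained this way is precisely the continued fraction expansion of $x$.

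The only real obstacle is confirming the explicit form of $\mathbf{I}_{\mathbf{v},n}$, but this is a routine block-matrix induction parallel to the computation already done for the one-dimensional case, combined with the fact that $q_{n-1}+q_n \to \infty$ which comes for free from the classical theory.
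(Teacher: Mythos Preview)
Your proof is correct and follows essentially the same route as the paper: compute $\mathbf{I}_{\mathbf{v},n}$ via the block structure of the $\mathbf{A}_n^{-1}$, read off the vertices of $\mathbb{I}_n(\mathbf{v})$, and use $y \le 1/(q_{n-1}+q_n)\to 0$ to force the point onto the base edge, with the converse coming from the invariance of $\mathsf{A}$ under $G$. The only difference is presentational---you foreground the block-diagonal observation more explicitly---but the argument is the same.
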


\begin{proof}
 The second part of the statement follows directly from the previous discussion on the dynamics restricted to the base of the triangle.
 For the first part, note that if a point is in the base of the triangle then it is in the \emph{base} of some sub-simplex $\mathbb{A}_n$ and it is mapped again to the base of the triangle, and therefore its itinerary consists only of $\mathbb{A}_n$ symbols.
 
 Now suppose $\mathbf{v}$ has the itinerary $\mathbb{A}_{a_1}, \mathbb{A}_{a_2}, ...$, then by previous results the approximating simplexes of $\mathbf{v}$ are of the form \[ \mathbb{I}_{n}(\mathbf{v}) = \mathbf{A}_{a_1}^{-1} \mathbf{A}_{a_2}^{-1} \cdots \mathbf{A}_{a_n}^{-1} V  = \mat{ccc}{p_{n-1} & 0 & p_n \\ 0 & 1 & 0 \\ q_{n-1} & 0 & q_n} V = \Mat{ccc}{p_{n-1} & p_{n-1} + p_{n} & p_{n-1} + p_{n} \\ 0 & 0 & 1 \\ q_n & q_{n-1} + q_n & q_{n-1} + q_n},\]
 where $p_n/q_n$ are the convergents of the irrational number $[0: a_1, a_2, ...]$.
 This simplexes projected to $\mathbb{R}^2$ are triangles with vertices 
 \[ \left( \frac{p_n}{q_n}, 0 \right), \hspace*{1em} \left( \frac{p_{n-1} + p_{n}}{q_{n-1} + q_n}, 0 \right), \hspace*{1em} \text{and} \hspace*{1em} \left( \frac{p_{n-1} + p_{n}}{q_{n-1} + q_n}, \frac{1}{q_{n-1} + q_n} \right).\]
 When projected to $\mathbb{R}^2$, the point $\mathbf{v} = (x, y)$ is in all of these triangles. 
 Coordinate-wise, $x$ is approximated by the convergents $p_n/q_n$ and $y \leq (q_{n-1} + q_n)^{-1} \rightarrow 0$.
 Therefore, $(x, y) = ([0: a_1, a_2, ...], 0)$.
\end{proof}
 
\begin{coro}
 Points with an itinerary with a tail consisting only of symbols of type $\mathbb{A}_n$ are preimages of the edges of the triangle.
\end{coro}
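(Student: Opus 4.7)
The plan is to reduce the corollary directly to the previous proposition by using the shift action of $G$ on itineraries. Suppose $\mathbf{v} \in V$ has an itinerary $\mathbb{X}_{a_1}, \mathbb{X}_{a_2}, \ldots$ whose tail consists only of $\mathbb{A}$-symbols. Then there exists $k \geq 0$ such that the itinerary from position $k+1$ onward is of the form $\mathbb{A}_{a_{k+1}}, \mathbb{A}_{a_{k+2}}, \ldots$. Since the action of $G$ on itineraries is the shift, the point $\mathbf{w} = G^{k}(\mathbf{v})$ has itinerary consisting purely of symbols of type $\mathbb{A}_n$.

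By the previous proposition, any non-rational point whose itinerary uses only $\mathbb{A}_n$ symbols lies on the base edge $\mathsf{A}$ of the triangle, and rational points are preimages of $\mathbf{0}$ (which itself sits on $\mathsf{A}$) by the earlier result on rational coordinates. In either case $\mathbf{w} \in \mathsf{A}$, so $G^{k}(\mathbf{v})$ lies on an edge of the triangle, which is precisely what it means for $\mathbf{v}$ to be a preimage of the edges.

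The only subtle point to check is that the reduction is honest, i.e.\ that the two possibilities (tail starting with a non-rational orbit versus tail collapsing into $\mathbf{0}$) are both covered by our standing convention on the partition pieces $\mathbb{A}_n, \mathbb{B}_n$. This is immediate: the convention was fixed precisely so that itineraries of points on edges and their preimages are uniquely determined, and points on $\mathsf{A}$ continue to have itineraries living in the $\mathbb{A}_n$ family under the embedded one-dimensional Gauss dynamics $G(x,0) = (1/x \bmod 1, 0)$. Thus there is no real obstacle; the argument is essentially a one-line shift computation combined with an appeal to the preceding proposition.
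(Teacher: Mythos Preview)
Your argument is correct and is exactly what the paper has in mind: the paper's entire proof is the single sentence ``This corollary follows from the conjugation to symbolic dynamics,'' and your reduction---apply $G^{k}$ to shift off the non-$\mathbb{A}$ prefix, then invoke the preceding proposition to land on the base edge $\mathsf{A}$---is precisely that conjugation made explicit. The extra care you take with the rational case is harmless (such points have finite itineraries under the stated conventions, so the hypothesis of an infinite $\mathbb{A}$-tail already forces the relevant iterate to be non-rational), but nothing else needs to be added.
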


This corollary follows from the conjugation to symbolic dynamics.  
We can say that the 1-dimensional Gauss Map \emph{lives} inside the 2-dimensional one, not just as the restriction to the base edge of the triangle but also as a sub-shift in the symbolic dynamics.

\begin{question}
 Is there an arithmetic/algebraic relation between the coordinates of a point in the preimages of the edges?
\end{question}

Let $\alpha$ be an irrational number.
Consider the points in the triangle of the form $(\alpha, y)$ that are eventually mapped to an edge.
As we have seen, the endpoints of the preimages of edges are rational points, so these are segments of straight lines with rational slope and we can express the $y$ coordinate of such points as $y = \frac{p}{q} \alpha + \frac{r}{s}$, for some $p/q, r/s  \in \mathbb{Q}$.

\begin{conjecture}
 A non-rational point $(x, y)$ is eventually mapped to an edge if there are some rational numbers $p/q, r/s$ such that $y = \frac{p}{q} x + \frac{r}{s}$
\end{conjecture}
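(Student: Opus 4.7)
The plan is to pass to projective/dual coordinates. The affine relation $y = (p/q)x + (r/s)$ is the restriction to the chart $\mathbb{R}^2 \hookrightarrow \RP{2}$ of a projective line $\ell^{T}\mathbf{v} = 0$, where $\ell = (\beta,\gamma,\delta)^{T} \in \mathbb{Z}^3$ is primitive. The three edges correspond to the distinguished line-vectors $\mathsf{A} \leftrightarrow (0,1,0)$, $\mathsf{B} \leftrightarrow (1,-1,0)$, and $\mathsf{F} \leftrightarrow (1,0,-1)$. Since each matrix applied by $G$ lies in $SL(3,\mathbb{Z})$, if $\mathbf{v}$ lies on the line $\ell$, then $G(\mathbf{v})$ lies on the line $(M^{-1})^{T}\ell$, which is again a primitive integer vector. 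Thus the sequence $\ell_n$ of lines through $G^n(\mathbf{v})$ evolves by a sequence of integer matrices drawn from the families $\{(\mathbf{A}_n^{-1})^{T}\}$ and $\{(\mathbf{B}_n^{-1})^{T}\}$, with the particular matrix chosen at step $n$ dictated by which sub-simplex $\mathbb{A}_{k}$ or $\mathbb{B}_{k}$ contains $G^{n}(\mathbf{v})$. The statement then becomes: this orbit of integer line vectors must eventually hit (up to scalar) one of the three edge-vectors.

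Having reframed the problem, I would attempt a descent: find a positive-integer \emph{height} $h(\ell)$ on primitive line vectors (e.g.\ $\max(|\beta|,|\gamma|,|\delta|)$, a weighted sum, or the square root of a quadratic form invariant under a suitable conjugate of the group) that is non-increasing under the induced dual dynamics and \emph{strictly} decreasing whenever $\ell_n$ does not already coincide with an edge-vector. Such a Lyapunov function would force $h(\ell_n)$ to stabilise at a minimum, and the minimum would have to be attained by one of the three edge-vectors. A parallel plan is to work symbolically: show that a point satisfying a non-trivial rational affine relation cannot have infinitely many $\mathbb{B}$-symbols in its itinerary, so that its tail consists only of $\mathbb{A}_n$ symbols, after which the preceding proposition and its corollary force the orbit onto the base edge.

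The main obstacle is precisely the descent. In the one-dimensional analogue the height is the denominator of a rational and the descent \emph{is} the Euclidean algorithm, neatly packaged by the matrices $\mathbf{A}_n = BA^{n-1}$. Here the dual action of the $\mathbf{B}_n$ family can temporarily \emph{inflate} any of $|\beta|,|\gamma|,|\delta|$, so no single coordinate admits a straightforward monotone descent. One therefore needs either a block argument — controlling a stretch of $\mathbb{B}$-steps by the subsequent $\mathbb{A}$-steps and using that the total word must still return to the base edge — or an invariant of the induced dynamics (perhaps derived from the determinant relation $\det(M) = \pm 1$ and the quadratic forms annihilating the triple $(p/q,\, r/s,\, 1)$). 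Combining such an invariant with the best-approximation estimate $|\alpha - p_n/q_n| < 1/q_n^2$ on the base edge to bound the integer coefficients of $\ell_n$ would, I expect, yield the finiteness of the orbit of $\ell_n$ and hence the conjecture. Until such an invariant is produced, however, the statement must be regarded as conjectural.
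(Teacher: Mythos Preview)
The paper does not prove this statement; it is explicitly labelled a \emph{Conjecture} and no proof is supplied. There is therefore nothing to compare your argument against. Your write-up is not a proof either, and you say so yourself in the last sentence: you reformulate the claim as a descent problem for the induced action on primitive integer line-vectors in the dual $\RP{2}$, identify the natural candidates for a height function, and then correctly isolate the obstruction --- the dual $\mathbf{B}_n$-moves need not decrease any obvious height, so the Euclidean-style monotone descent that works in dimension one does not go through directly.

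Your dual/projective reformulation is a genuine contribution beyond what the paper offers: the paper only motivates the conjecture by noting that preimages of edges are rational line segments, so that any point on such a preimage satisfies an affine relation $y=\frac{p}{q}x+\frac{r}{s}$; the converse direction is simply posed. Tracking the line $\ell_n=(M_n^{-1})^T\ell$ through $G$ is the right object to look at, and your alternative symbolic plan (show that a non-trivial rational relation forces the itinerary tail to avoid $\mathbb{B}$-symbols) aligns with the paper's Corollary that tails consisting only of $\mathbb{A}_n$-symbols characterise preimages of the base edge. But until the descent or the symbolic exclusion is actually carried out, the statement remains open, exactly as both you and the paper conclude.
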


As we have seen, in the base of the triangle we have the 1-dimensional Gauss Map, and for these points the \emph{bases} of the approximating simplexes are the corresponding 1-simplexes coming from the convergents.

Let $\mathbf{v}$ be a non rational point and \[\mathbb{I}_{n}(\mathbf{v}) = \Mat{ccc}{p_{n,1} & p_{n,2} & p_{n,3} \\ q_{n,1} & q_{n,2} & q_{n,3} \\ r_{n,1} & r_{n,2} & r_{n,3}} \] an approximating simplex. Consider the tetrahedron in $\mathbb{R}^3$ with one vertex in the origin and the other three in the points $(p_{n,i}, q_{n,i}, r_{n,i})$, for $i=1,2,3$. 
Suppose there is a point of $\mathbb{Z}^3$ inside this tetrahedron, that would mean there is a straight line passing through such point, and that line would correspond to a rational point closer to $\mathbf{v}$ than any vertex of the approximating simplex.
Moreover, being inside this tetrahedron would imply that, when projected to $\mathbb{R}^2$, the common denominator of the coordinates of this point would be smaller than the common denominator of the vertices, which would imply we can approximate $\mathbf{v}$ better with a lower common denominator.

\begin{prop}[Best approximations]
 There are no integral points inside an approximating simplex other than its vertices.
\end{prop}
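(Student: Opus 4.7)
The strategy is to mimic the one-dimensional proof, but Pick's theorem does not survive the dimension jump (Reeve's tetrahedra show that a lattice tetrahedron of small volume may contain many lattice points on the segment joining the origin to $(1,1,N)$). Fortunately we do not need a full Pick-type formula: the approximating simplexes have the special property that they come from the group $\mathcal{M}_{2}\subseteq SL(3,\mathbb{Z})$ acting on the reference simplex $V$, and the normalization $\lvert\det V\rvert=1$ forces the associated tetrahedra to be \emph{unimodular}, which already implies emptiness.

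Concretely, the plan is as follows. First I would note that the approximating simplex can be written as $\mathbb{I}_{n}(\mathbf{v})=\mathbf{I}_{\mathbf{v},n}V$ where $\mathbf{I}_{\mathbf{v},n}$ is a product of matrices of type $\mathbf{A}_{k}^{-1}$ and $\mathbf{B}_{k}^{-1}$, hence $\mathbf{I}_{\mathbf{v},n}\in\mathcal{M}_{2}\subseteq SL(3,\mathbb{Z})$. A quick determinant computation gives $\det V=1$, so the matrix of vertices of $\mathbb{I}_{n}(\mathbf{v})$ (viewed in $\mathbb{Z}^{3}$) has determinant $\pm 1$. The tetrahedron with apex at the origin and base equal to these three vertices therefore has volume $1/6$.

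Second, I would use this unimodularity to conclude. Write the three non-origin vertices as $w_{1},w_{2},w_{3}\in\mathbb{Z}^{3}$; since $\det(w_{1},w_{2},w_{3})=\pm 1$, the triple $\{w_{1},w_{2},w_{3}\}$ is a $\mathbb{Z}$-basis of $\mathbb{Z}^{3}$. Any point $p$ of the closed tetrahedron admits barycentric coordinates $p=\lambda_{1}w_{1}+\lambda_{2}w_{2}+\lambda_{3}w_{3}$ with $\lambda_{i}\geq 0$ and $\lambda_{1}+\lambda_{2}+\lambda_{3}\leq 1$. If $p\in\mathbb{Z}^{3}$, then expressing $p$ in the basis $\{w_{1},w_{2},w_{3}\}$ forces each $\lambda_{i}\in\mathbb{Z}$. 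The only non-negative integer solutions of $\lambda_{1}+\lambda_{2}+\lambda_{3}\leq 1$ are $(0,0,0)$ and the three standard basis vectors, i.e. the four vertices themselves.

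The main technical point I anticipate is convincing the reader that the 1-dimensional Pick argument really fails and needs replacement; once that is acknowledged, the proof is essentially the single observation that $\mathbf{I}_{\mathbf{v},n}\in SL(3,\mathbb{Z})$ together with $\lvert\det V\rvert=1$ produces a unimodular lattice basis, and unimodular lattice bases bound empty simplexes. The same pattern should, with no change, give the analogous statement in arbitrary dimension once $\mathcal{M}_{n}\subseteq SL(n+1,\mathbb{Z})$ and $\lvert\det V\rvert=1$ are verified for the general M\"onkemeyer construction.
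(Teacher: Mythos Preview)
Your argument is correct and is essentially the paper's own proof: both observe that the vertex matrix $\mathbb{I}_{n}(\mathbf{v})=\mathbf{I}_{\mathbf{v},n}V$ lies in $SL(3,\mathbb{Z})$, so the barycentric coordinates of any lattice point in the tetrahedron must be integers in $[0,1]$, forcing it to be a vertex. Your phrasing via ``$\mathbb{Z}$-basis'' and the inequality $\lambda_{1}+\lambda_{2}+\lambda_{3}\leq 1$ is in fact slightly more careful than the paper's (which writes $=1$), and your remark on Reeve tetrahedra and the failure of Pick's theorem is a useful clarification the paper does not make.
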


\begin{proof}
 Note that the matrices that define the simplexes $\mathbb{I}_{n}(\mathbf{v})$ are in $SL(3, \mathbb{Z})$.
 Suppose there is a point with integer coordinates inside the associated tetrahedron, namely \[ \Mat{c}{i\\j\\k}. \]
 Since these tetrahedra are convex subsets of $\mathbb{R}^3$ there exist three numbers $\varepsilon_1, \varepsilon_2, \varepsilon_3 \in [0,1]$ such that $\varepsilon_1 + \varepsilon_2 + \varepsilon_3 = 1$ and \[\Mat{ccc}{p_{n,1} & p_{n,2} & p_{n,3} \\ q_{n,1} & q_{n,2} & q_{n,3} \\ r_{n,1} & r_{n,2} & r_{n,3}} \Mat{c}{\varepsilon_1 \\ \varepsilon_2 \\ \varepsilon_3} = \Mat{c}{i\\j\\k}.\]
 But $\mathbb{I}_{n}(\mathbf{v})^{-1}$ exists and it also has integer entries so the equation \[ \mathbb{I}_{n}(\mathbf{v})^{-1} \Mat{c}{i\\j\\k}  = \Mat{c}{\varepsilon_1 \\ \varepsilon_2 \\ \varepsilon_3}\]
 forces the numbers $\varepsilon_i$ to be integers. 
 The only options are \[ \Mat{c}{\varepsilon_1 \\ \varepsilon_2 \\ \varepsilon_3} = \Mat{c}{1\\0\\0} \hspace*{2em} \text{or} \hspace*{2em} \Mat{c}{\varepsilon_1 \\ \varepsilon_2 \\ \varepsilon_3} = \Mat{c}{0\\1\\0} \hspace*{2em} \text{or} \hspace*{2em} \Mat{c}{\varepsilon_1 \\ \varepsilon_2 \\ \varepsilon_3} = \Mat{c}{0\\0\\1}.\]
 So the only integral points inside an approximating simplex are its vertices.
\end{proof}

%
%
\change

Consider a periodic point, $\mathbf{v} = G^k (\mathbf{v})$, with itinerary $\overline{\mathbb{X}_{a_1}, \mathbb{X}_{a_2}, ... \mathbb{X}_{a_k}}$.
The periodic point equation implies that $\mathbf{v}$ is an eigenvector for the matrix $ \left( \mathbf{X}_{a_k} \cdots \mathbf{X}_{a_2}  \mathbf{X}_{a_1} \right) = \mathbf{I}_{\mathbf{v}, n}^{-1}$. 
Let $\lambda$ be the associated eigenvalue, by definition $\lambda$ is an algebraic number of degree at most 3, and from the system of equations
\[ \left( \mathbf{I}_{\mathbf{v}, n}^{-1} - \lambda \mat{ccc}{1&0&0\\0&1&0\\0&0&1}\right) \mat{c}{x \\ y \\ 1} = \mat{c}{0\\0\\0},\] 
we get that the entries $x$ and $y$ are in the number field $\mathbb{Q}(\lambda)$.

For example, consider the fixed points of $G$. 
The eigenpolynomial of the matrix $\mathbf{A}_n$ is $(1-\lambda)(\lambda^2 + n \lambda - 1)$.
The number $\lambda_0 = [0: \overline{n}]$ is a root of the second factor, and the associated eigenvector is \[ \Mat{c}{\lambda_0 \\ 0 \\ 1}. \] 

The eigenpolynomial of the matrix $\mathbf{B}_n$ is $\lambda^3 + n \lambda^2 + (n-1) \lambda - 1$ and it has one real root  $\lambda \in (0, 1)$. The eigenvector is determined by the system of equations
\[\mat{ccc}{1-n-\lambda & -1 & 1 \\ 1 & -1-\lambda & 0 \\ 1 & 0 & -\lambda} \mat{c}{x\\y\\1} = \mat{c}{0\\0\\0},\]
\[\mat{ccc}{1 & 0 & -\lambda \\ 0 & 1 & -\frac{\lambda}{1+\lambda} \\ 0 & 0 & \lambda^3 + n \lambda^2 + (n-1) \lambda - 1} \mat{c}{x\\y\\1} = \mat{c}{0\\0\\0},\]
and we get that $x = \lambda$ and $y = \frac{\lambda}{1+\lambda}$.
For example, the point with itinerary $[\overline{\mathbb{B}_1}]$ is 
\[ \Mat{c}{\lambda \\ \frac{\lambda}{\lambda+1} \\ 1} \hspace*{2em} \text{with} \hspace*{2em} \lambda = \frac{1}{3} \left( \sqrt[3]{\frac{25 + 3 \sqrt{69}}{2}} + \sqrt[3]{\frac{25 - 3 \sqrt{69}}{2}} -1 \right)\]
 
\begin{conjecture}
A point in the triangle is eventually periodic if both coordinates are in the same cubic number field.
\end{conjecture}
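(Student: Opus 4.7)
The plan is to follow the pattern of the 1D Lagrange-type result proved earlier in the excerpt, namely to show that the orbit $\{G^{k}(\mathbf{v})\}$ is finite by tracking integer invariants that stay bounded under iteration and then invoking discreteness. First I would set up: let $\mathbf{v} = (x, y, 1)^{T}$ with $x, y \in K$, where $K$ is a cubic field, and let $\sigma_1 = \mathrm{id}, \sigma_2, \sigma_3$ denote the three embeddings $K \hookrightarrow \mathbb{C}$. Clear denominators by choosing a positive integer $d$ with $\tilde{\mathbf{v}} = (dx, dy, d)^{T} \in \mathcal{O}_K^{3}$; this represents the same projective point. Since $G^{k}(\mathbf{v}) = M_{k}\mathbf{v}$ with $M_{k} \in \mathcal{M}_2 \subset SL(3,\mathbb{Z})$, the iterates $\tilde{\mathbf{v}}_{k} = M_{k}\tilde{\mathbf{v}}$ remain in $\mathcal{O}_K^{3}$, and the sequence of projective classes $[\tilde{\mathbf{v}}_{k}]$ is exactly the $G$-orbit of $\mathbf{v}$.

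Second, I would exploit the Galois structure in the spirit of the step in the 1D proof where $x$ and its conjugate were fed into the bilinear form $[x\ 1]\bigl(\begin{smallmatrix}a&b/2\\b/2&c\end{smallmatrix}\bigr)[x\ 1]^{T}=0$. Form the matrix
\[
W_{k} = \bigl[\,\tilde{\mathbf{v}}_{k} \,\bigm|\, \sigma_2(\tilde{\mathbf{v}}_{k}) \,\bigm|\, \sigma_3(\tilde{\mathbf{v}}_{k})\,\bigr]
\]
whose columns live in the three conjugate copies of $\mathcal{O}_K^{3}$ inside $K_{\mathbb{R}}^{3} = (K \otimes_{\mathbb{Q}} \mathbb{R})^{3}$. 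Because $M_{k}$ has rational entries it commutes with all $\sigma_i$, so $W_{k} = M_{k}W_{0}$ and $\det W_{k} = \pm \det W_{0}$, a fixed nonzero constant (the degenerate cases where $1,x,y$ are $\mathbb{Q}$-linearly dependent reduce to the already-treated rational or $1$D-on-an-edge situation).

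Third, I would try to close the argument by a pigeonhole in $\mathcal{O}_K^{3}$: the first column $\tilde{\mathbf{v}}_{k}$ has its projective image trapped in $V$, so its entry-ratios are bounded, while the invariance $|\det W_{k}| = |\det W_{0}|$ forces the conjugate columns to contract by the same factor that $\|\tilde{\mathbf{v}}_{k}\|$ expands. Since $\sigma_i(\mathcal{O}_K)$ is discrete and a bounded region of $K_{\mathbb{R}}^{3}$ contains only finitely many lattice points, boundedness of all three columns at once would pin $\tilde{\mathbf{v}}_{k}$ to finitely many projective classes, yielding eventual periodicity exactly as the quadratic-form coefficients did in the 1D proof.

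The hard part, and the real content of the conjecture, is justifying that the conjugate columns of $W_{k}$ do remain bounded. In the 1D proof this was free: the sharp approximation $|x - p_n/q_n| < 1/q_n^{2}$ — itself a byproduct of $\det \mathbb{I}_n(\mathbf{v}) = \pm 1$ and the nesting of simplexes — was exactly enough to force $|\tilde a|, |\tilde b|, |\tilde c|$ to stay bounded. The 2D analog would require the approximating simplex $\mathbb{I}_{k}(\mathbf{v})$ to shrink around $\mathbf{v}$ at a rate calibrated to the Galois geometry of $K$, something like a Roth-strength estimate for the M\"onkemeyer convergents on cubic points. No such estimate is currently known, and whether the M\"onkemeyer algorithm achieves it is precisely the Hermite-problem flavored question flagged at the end of Section 2. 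A positive answer would simultaneously imply the conjecture and bound the upper Lyapunov exponent $\lambda^{+}$ of $G$ on cubic orbits; conversely, any obstruction found here would point to concrete examples of cubic points whose $G$-orbit is infinite, and thus would refute the conjecture rather than merely leave it open.
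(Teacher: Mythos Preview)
The statement you are attempting to prove is labeled a \emph{conjecture} in the paper; there is no proof to compare against. More to the point, the paper immediately follows the conjecture with Example~2, which refutes it as literally stated: with $\alpha=\sqrt[3]{2}-1$ the point $\mathbf{w}=[2\alpha:\alpha:2]=(\alpha,\alpha/2)$ has both coordinates in the cubic field $\mathbb{Q}(\alpha)$, yet after four iterations it lands on the base edge with first coordinate $(8\alpha-2)/(2-7\alpha)$, a cubic irrational whose continued fraction is not eventually periodic. Hence $\mathbf{w}$ is not eventually periodic under $G$, and the paper replaces the conjecture by the refined version adding the hypothesis that $x$ and $y$ be linearly independent over $\mathbb{Q}$.

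Your sketch brushes past exactly this case. You say that when $1,x,y$ are $\mathbb{Q}$-linearly dependent the problem ``reduces to the already-treated rational or $1$D-on-an-edge situation,'' implicitly treating that reduction as harmless. It is not: the $1$D Gauss map is eventually periodic only on quadratic irrationals, and a cubic point collapsing to an edge deposits a cubic irrational there. So the degenerate branch is not a side case to be waved away but the source of actual counterexamples, and any correct statement must exclude it from the hypothesis, as the paper's refined conjecture does.

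For the non-degenerate case your outline is the natural Lagrange-type strategy, and you are candid that the crux --- controlling the conjugate columns of $W_k$, equivalently obtaining a sufficiently strong simultaneous-approximation rate for the M\"onkemeyer convergents at cubic points --- is unproved. That assessment is accurate; the paper does not supply such an estimate either, which is precisely why the (refined) statement remains a conjecture rather than a theorem.
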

 
\textbf{Example 1.}
Consider the number $\alpha = \sqrt[3]{2} - 1$ with minimal polynomial $\alpha^3 + 3 \alpha^2 + 3 \alpha - 1 = 0$, and the point \[\mathbf{w} = \Mat{c}{\alpha \\ \alpha^2 \\ 1} \approx \Mat{c}{0.25992 \\ 0.06755 \\ 1}.\]
We have that $1/4 < \alpha < 1/3$ and $\alpha^2 < 1-3\alpha$ so the point $\mathbf{w}$ is in the piece $\mathbb{A}_3$.
The next point is 
\[G(\mathbf{w}) = \mathbf{A}_3 \mathbf{w} 
                = \Mat{c}{1-3\alpha \\ \alpha^2 \\ \alpha} 
                \approx \Mat{c}{0.84732 \\ 0.25992 \\ 1}. \]
This point is in the piece $\mathbb{B}_1$, and we have
\[G^2(\mathbf{w}) = \mathbf{B}_1 \Mat{c}{1 - 3\alpha\\ \alpha^2 \\ \alpha} 
                  = \Mat{c}{\alpha - \alpha^2 \\ 1 - 3 \alpha - \alpha^2 \\ 1 - 3 \alpha} 
                  \approx \Mat{c}{0.87343 \\ 0.69324 \\ 1}.\]
This point is again in the piece $\mathbb{B}_1$,
\[G^3(\mathbf{w}) = \mathbf{B}_1 \Mat{c}{\alpha - \alpha^2 \\ 1 - 3 \alpha - \alpha^2 \\ 1 - 3 \alpha} 
                  = \Mat{c}{\alpha^2 \\ 4 \alpha - 1 \\ \alpha - \alpha^2}
                  \approx \Mat{c}{0.351207 \\ 0.206299 \\ 1}.\]
The next symbol is $\mathbb{A}_2$ and we have
\[G^4(\mathbf{w}) = \mathbf{A}_2 \Mat{c}{\alpha^2 \\ 4 \alpha - 1 \\ \alpha - \alpha^2}
                  = \Mat{c}{\alpha - 3 \alpha^2 \\ 4 \alpha - 1 \\ \alpha^2}
                  \approx \Mat{c}{0.847322 \\ 0.587401 \\ 1}.\]
This point is again in the piece $\mathbb{B}_1$,
\[G^5(\mathbf{w}) = \mathbf{B}_1 \Mat{c}{\alpha - 3 \alpha^2 \\ 4 \alpha - 1 \\  \alpha^2}
                  = \Mat{c}{\alpha^2 - 4 \alpha + 1 \\ 1 - 3\alpha - 3\alpha^2 \\ \alpha - 3 \alpha^2}
                  \approx \Mat{c}{0.486945 \\ 0.306756 \\ 1}.\]
The next symbol is $\mathbb{B}_2$,
\[G^6(\mathbf{w}) = \mathbf{B}_2 \Mat{c}{\alpha^2 - 4 \alpha + 1 \\ 1 -3 \alpha^2 -3 \alpha \\ \alpha - 3 \alpha^2}
                  = \Mat{c}{-\alpha^2 + 8 \alpha - 2 \\ 4 \alpha^2 - \alpha \\ \alpha^2 -4 \alpha + 1}
                  \approx \Mat{c}{0.423661 \\ 0.370039 \\ 1}.\]
This point is again in the piece $\mathbb{B}_2$ and with a substitution of the minimal polynomial we get,           
\[G^7(\mathbf{w}) = \mathbf{B}_2 \Mat{c}{-2 \alpha^2 -7 \alpha +2 \\ 4\alpha -1 \\ \alpha^2 +4 \alpha -1}
                  = \Mat{c}{ 3 \alpha^2 +7 \alpha -2 \\ -2 \alpha^2 -11 \alpha +3 \\ -2 \alpha^2 -7 \alpha +2} 
                  \approx \Mat{c}{0.486944 \\ 0.126567 \\ 1}.\]       
This point is again in the piece $\mathbb{B}_2$,           
\[G^8(\mathbf{w}) = \mathbf{B}_2 \Mat{c}{ 3 \alpha^2 +7 \alpha -2 \\ -2 \alpha^2 -11 \alpha +3 \\ -2 \alpha^2 -7 \alpha +2} 
                  = \Mat{c}{-3 \alpha^2 -3 \alpha +1 \\ 5 \alpha^2 +18 \alpha -5 \\ 3 \alpha^2 +7 \alpha -2} 
                  \approx \Mat{c}{0.793701 \\ 0.740079 \\ 1}.\]       
The next symbol is $\mathbb{B}_1$ and with a substitution of the minimal polynomial we get
\[G^9(\mathbf{w}) = \mathbf{B}_1 \Mat{c}{ \alpha^2 \\ -5 \alpha^2 -10 \alpha +3 \\ -2 \alpha^2 -3 \alpha +1} 
                  = \Mat{c}{ 3 \alpha^2 +7 \alpha -2 \\ 6 \alpha^2 +10 \alpha -3 \\ \alpha^2}
                  \approx \Mat{c}{0.327480 \\ 0.067558 \\ 1}.\]   
This point is in the piece $\mathbb{B}_3$,
\[G^{10}(\mathbf{w}) = \mathbf{B}_3 \Mat{c}{ 3 \alpha^2 +7 \alpha -2 \\ 6 \alpha^2 +10 \alpha -3 \\ \alpha^2}
                  = \Mat{c}{-11 \alpha^2 -24 \alpha +7 \\ -3 \alpha^2 -3 \alpha +1 \\ 3 \alpha^2 +7 \alpha -2}
                  \approx \Mat{c}{0.847322 \\ 0.793701 \\ 1}.\]   
The next symbol is $\mathbb{B}_1$ and with a substitution of the minimal polynomial we get
\[G^{11}(\mathbf{w}) = \mathbf{B}_1 \Mat{c}{ 7 \alpha^2 + 10 \alpha -3\\ \alpha^2 \\ -2 \alpha^2 -3 \alpha +1} 
                  = \Mat{c}{ -3 \alpha^2 -3\alpha +1 \\ 6 \alpha^2 +10 \alpha -3 \\ 7\alpha^2 +10\alpha -3}
                  \approx \Mat{c}{0.243472 \\ 0.0632835 \\ 1}.\]   
The next symbol is $\mathbb{B}_4$ and with a substitution of the minimal polynomial we get
\[G^{12}(\mathbf{w}) = \mathbf{B}_4 \Mat{c}{ \alpha \\ \alpha^2 \\ \alpha^2 +1} 
                  = \Mat{c}{ -3 \alpha  + 1 \\ - \alpha^2 + \alpha \\ \alpha}
                  \approx \Mat{c}{0.847322 \\ 0.740079 \\ 1}.\]   
This point is in the piece $\mathbb{B}_1$,
\[G^{13}(\mathbf{w}) = \mathbf{B}_1 \Mat{c}{ -3 \alpha  + 1 \\ - \alpha^2 + \alpha \\ \alpha}
                  = \Mat{c}{ \alpha^2 \\ \alpha^2 -4 \alpha +1 \\ - 3 \alpha + 1}
                  \approx \Mat{c}{0.306755 \\ 0.126567 \\ 1}.\] 
The next symbol is $\mathbb{B}_3$ and we get $G^{14}(\mathbf{w}) = G^{4}(\mathbf{w})$:
\[G^{14}(\mathbf{w}) = \mathbf{B}_3 \Mat{c}{ \alpha^2 \\ \alpha^2 -4 \alpha +1 \\ - 3 \alpha + 1}
                  = \Mat{c}{ \alpha -3 \alpha^2  \\ 4\alpha -1 \\ \alpha^2}
                  \approx \Mat{c}{0.847322 \\ 0.587401 \\ 1}.\]     
So the itinerary of the point $\mathbf{w}$ is $[\mathbb{A}_{3}, \mathbb{B}_{1}, \mathbb{B}_{1}, \mathbb{A}_{2}, \overline{\mathbb{B}_{1}, \mathbb{B}_{2}, \mathbb{B}_{2}, \mathbb{B}_{2}, \mathbb{B}_{1}, \mathbb{B}_{3}, \mathbb{B}_{1}, \mathbb{B}_{4}, \mathbb{B}_{1}, \mathbb{B}_{3}}]$.

The following example shows that being in the same cubic number field does not imply periodicity.

\textbf{Example 2.} Consider again $\alpha = \sqrt[3]{2} - 1$ and the point \[ \mathbf{w} = \Mat{c}{2\alpha \\ \alpha \\ 2 }.\]
The itinerary of this point begins with $[\mathbb{A}_3, \mathbb{B}_1, \mathbb{A}_1, \mathbb{B}_1, ...]$.
\[ G(\mathbf{w}) = \mathbf{A}_3 \Mat{c}{2\alpha \\ \alpha \\ 2 }
                 = \Mat{c}{2 - 6\alpha \\ \alpha \\ 2 }, \hspace*{2em}
   G^2(\mathbf{w}) = \mathbf{B}_1 \Mat{c}{2 - 6\alpha \\ \alpha \\ 2 }
                 = \Mat{c}{\alpha \\ 2 -7\alpha \\ 2 -6\alpha },
\]
\[ G^3(\mathbf{w}) = \mathbf{A}_1 \Mat{c}{\alpha \\ 2 -7\alpha \\ 2 -6\alpha }
                 = \Mat{c}{2 - 7\alpha \\ 2 - 7\alpha \\ \alpha }, \hspace*{2em}
   G^4(\mathbf{w}) = \mathbf{B}_1 \Mat{c}{2 - 7\alpha \\ 2 - 7\alpha \\ \alpha }
                 = \Mat{c}{8\alpha -2\\0 \\ 2 - 7\alpha },
\]
and from that, the itinerary follows the continued fraction expansion of $(8 \alpha -2)/(2 - 7\alpha)$, which is not periodic.

\begin{conjecture}
 To be a periodic point, the numbers $x$ and $y$ must be in the same cubic number field and also be linear independent over the rationals.
\end{conjecture}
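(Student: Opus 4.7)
Suppose $\mathbf{v} = \Mat{c}{x \\ y \\ 1}$ is periodic of period $k$, so that the composition $M = \mathbf{X}_{a_k} \cdots \mathbf{X}_{a_1} \in \mathcal{M}_2 \subseteq SL(3, \mathbb{Z})$ satisfies $M \mathbf{v} = \lambda \mathbf{v}$ for some real $\lambda > 0$ (positivity holds because $\mathbf{v}$ sits in the positive cone preserved along the orbit). The characteristic polynomial $p_M(t) \in \mathbb{Z}[t]$ is monic of degree $3$ with constant term $-\det(M) = -1$, so by the rational root theorem its only possible rational roots are $\pm 1$. My plan is to show that, for $\mathbf{v}$ in the interior of the triangle, $p_M$ is irreducible over $\mathbb{Q}$, and to deduce both the cubic-field conclusion and the $\mathbb{Q}$-linear independence of $\{1,x,y\}$ as formal consequences of this irreducibility.

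First I would rule out the degenerate factorizations of $p_M$. If $p_M$ splits completely over $\mathbb{Q}$, then all eigenvalues lie in $\{\pm 1\}$ and any eigenspace has a basis in $\mathbb{Q}^3$, so $\mathbf{v}$ is a rational projective point; by the earlier proposition on rational points it eventually maps to $\mathbf{0}$, incompatible with being periodic and interior. If instead $p_M = (t-\mu)q(t)$ with $\mu \in \{\pm 1\}$ and $q$ an irreducible quadratic, then $M$ preserves the rational $2$-plane $\Pi \subset \mathbb{R}^3$ associated to the $q$-factor, $\mathbf{v} \in \Pi$, and projectively $[\mathbf{v}]$ sits on the $M$-invariant $\mathbb{Q}$-rational line $L = \mathbb{P}(\Pi) \subset \RP{2}$. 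The key step is then to show that every such $M$-invariant rational line arising from a word in the generators $\{\mathbf{A}_n, \mathbf{B}_n\}$ coincides with a preimage of one of the edges $\mathsf{A}$, $\mathsf{B}$, $\mathsf{F}$ of the base triangle; this forces $[\mathbf{v}]$ onto a preimage of an edge, contradicting interiority.

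Once $p_M$ is irreducible, $\lambda$ has degree exactly $3$ over $\mathbb{Q}$, and solving $(M - \lambda I)\mathbf{v} = 0$ by taking the cross product of two rows of $M - \lambda I$ yields an eigenvector with entries in $\mathbb{Z}[\lambda]$; normalizing the third coordinate to $1$ places $x$ and $y$ in the cubic number field $K = \mathbb{Q}(\lambda)$. For the linear independence, assume for contradiction that $a + bx + cy = 0$ for some $(a,b,c) \in \mathbb{Q}^3 \setminus \{0\}$. Then $\mathbf{v}$ lies on the $\mathbb{Q}$-rational hyperplane $H = \{bX + cY + aZ = 0\}$, and because $M\mathbf{v} = \lambda \mathbf{v}$ is a scalar multiple of $\mathbf{v}$, iterating gives $\mathbf{v} \in M^{-j}(H)$ for every $j \geq 0$. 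If some $M^{-j}(H)$ is distinct from $H$, then $[\mathbf{v}]$ lies on the rational projective line $\mathbb{P}(H \cap M^{-j}(H))$, forcing $x, y \in \mathbb{Q}$, which is excluded above; otherwise $H$ is $M$-invariant, and $p_M$ factors as the product of the characteristic polynomial of $M|_H$ with the characteristic polynomial on $\mathbb{R}^3/H$, contradicting irreducibility.

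The main obstacle is the classification step at the end of the second paragraph: identifying all $M$-invariant rational projective lines of elements $M \in \mathcal{M}_2$ and matching them bijectively with preimages of the three edges. A natural route is induction on the length of the defining word in $\{\mathbf{A}_n, \mathbf{B}_n\}$, exploiting the $SL(2,\mathbb{Z})$-subgroup picked out by the $\mathbf{A}_n$'s (which already preserves the plane $Y = 0$ spanned by the base edge) together with the proposition characterizing $\mathbb{A}_n$-only itineraries; the open question on coordinate permutations inside $\mathcal{M}_2$ posed earlier in the excerpt is directly relevant, since a full answer there would pin down precisely how the $\mathbf{B}_n$-factors can carry invariant lines onto the images of the remaining two edges.
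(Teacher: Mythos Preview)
The statement you attack is labeled a \emph{Conjecture} in the paper and carries no proof there; the surrounding text only observes that a periodic point is an eigenvector of some $M\in SL(3,\mathbb{Z})$ and hence has coordinates in $\mathbb{Q}(\lambda)$, and then motivates the conjecture with the two worked examples. So there is no argument in the paper for you to compare against.

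On the merits of your outline: the reduction to irreducibility of $p_M$ is the right framing, and your deduction of $\mathbb{Q}$-linear independence of $\{1,x,y\}$ from irreducibility is clean and correct. The genuine gap is exactly the one you flag, and it is not a technicality. What you need in the $(t-\mu)q(t)$ case is that an interior periodic point cannot lie on any rational projective line; but this is precisely the content of another unproved conjecture in the same section (that a non-rational $(x,y)$ with $y=\tfrac{p}{q}x+\tfrac{r}{s}$ is eventually sent to an edge), specialized to periodic orbits. Your proposed inductive attack via word length and the $SL(2,\mathbb{Z})$-subgroup generated by the $\mathbf{A}_n$ does not obviously terminate: a single $\mathbf{B}_n$ factor moves the invariant plane off $\{Y=0\}$, and there is no mechanism in your sketch that forces it back. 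The paper's own example of the fixed points of $\mathbf{A}_n$, namely $(\lambda_0,0)$ with $\lambda_0$ quadratic, shows that reducible $p_M$ genuinely occurs on the boundary; your restriction to interior points is therefore necessary, but the conjecture as stated does not carry that hypothesis, so even a completed version of your argument would prove a slightly different (and more plausible) statement than the one written.
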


These examples suggest that the dynamics of $G$ \emph{collapse} any rational dependence on the coordinates. 
For a quadratic irrational $\alpha$, the corresponding number field can be seen as a vector space over $\mathbb{Q}$ generated by $\alpha$ and 1, so any pair of numbers would be rational dependent, and as a previous conjecture states, the corresponding point would be mapped to the edge of the triangle, where the 1-dimensional dynamics takes place, eventually becoming periodic. 
As in the second example, the numbers are cubic irrationals but they are rational dependent. 
The dynamics \emph{collapse} this dependency and the point is mapped to the edge, but in this case it can not be periodic.
Periodicity in dimension $n$ implies that the coordinates are in the same number field of degree at most $n$. 
An algebraic number of higher degree does not \emph{fit} as an eigenvalue of a matrix in $\mathcal{M}_n$.

\begin{question}
 Do all cubic irrational numbers in $[0,1]$ appear as a coordinate of a periodic point of $G$?
\end{question}

On \cite{Murru} each cubic irrational number $\alpha$ is realized as a periodic point of a dynamical system on the set of pairs of numbers in the cubic number field $\mathbb{Q}(\alpha)$.

\begin{question}
 Are the cubic number fields so diverse that a single dynamical system can not give an affirmative answer to the previous question? 
\end{question}

\change

\begin{question}
 Is the rate of growth of the denominators of the approximating simplexes related to the rate of approximation?
\end{question}

Consider the case of non rational points in the triangle and write the sequence of approximating simplexes as \[ \mathbb{I}_{n}(\mathbf{v}) = \Mat{ccc}{p_{n,1} & p_{n,2} & p_{n,3} \\ q_{n,1} & q_{n,2} & q_{n,3} \\ r_{n,1} & r_{n,2} & r_{n,3}} \hspace*{2em} \text{ where } \hspace*{2em} \mathbf{v} = \Mat{c}{x \\ y \\ 1}\] 
Suppose there is a $m>0$ such that for all $n$ \[ \left| x - \frac{p_{n,i}}{r_{n,i}} \right| + \left| y - \frac{q_{n,i}}{r_{n,i}} \right| < \frac{1}{r_{n,i}^m},\]
we can get an upper bound for the number $m$ as \[\gamma_i(n) := \frac{-\log\left( \left| x - \frac{p_{n,i}}{r_{n,i}} \right| + \left| y - \frac{q_{n,i}}{r_{n,i}} \right| \right)}{\log(r_{n,i})} > m.\]
In the Figure \ref{rate_convergence2} are the values of $\gamma_i(n)$ for the first 50 approximating simplexes of the fixed point $[\overline{\mathbb{B}_1}]$. 
Following the intuition from the 1-dimensional case, we would expect this point to be approximated the \emph{slowest}, and as the Figure \ref{rate_convergence2} suggests, the value of the bound $m$ would be between 1 and 2.

\begin{figure}
 \begin{center}
  \begin{tikzpicture}[scale=2.0]
   \draw[blue!30!white, dashed] (0,2) --(5,2);
   \draw[blue!30!white, dashed] (0,1) --(5,1);
   \draw[->, thick, black] (0,0) --(5,0);
   \draw[->, thick, black] (0,0) --(0,3);
   \node[black] at (2.5,-0.2) {$n$};
   \node[blue!30!white] at (-0.1,2) {$2$};
   \node[blue!30!white] at (-0.1,1) {$1$};
   \node[blue] at (-0.4,1.8) {$\gamma_1(n)$};
   \node[green!80!black] at (-0.4,1.5) {$\gamma_2(n)$};
   \node[red!90!blue] at (-0.4,1.2) {$\gamma_3(n)$};
   
   \input{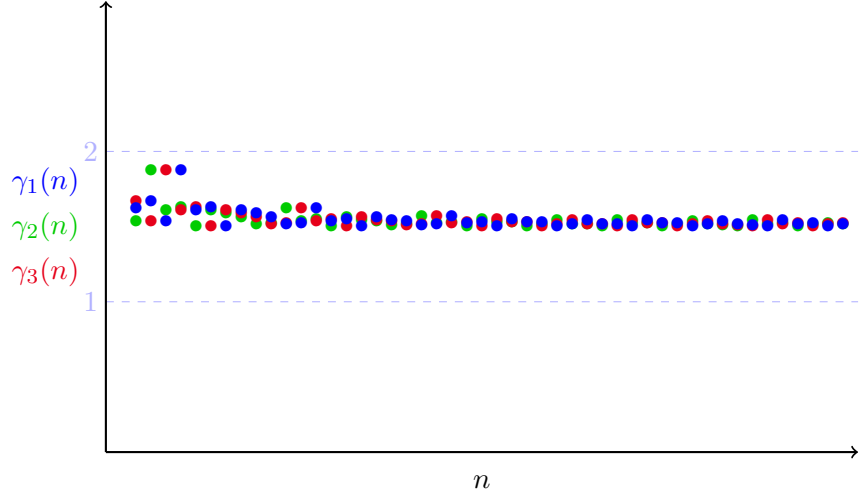}
  \end{tikzpicture}
 \end{center}
 \caption{Graph of the functions $\gamma_i(n)$ for the fixed point $[\overline{\mathbb{B}_1}]$.}
 \label{rate_convergence2}
\end{figure}

\change




\begin{question}
 Is there a relation between the upper Lyapunov Exponent for the map $G$ and the exponential growth rate of the denominators in the approximating simplexes?
\end{question}

\section{The 3-dimensional case}

Once again, consider the matrices and simplex
\[ A = \mat{cccc}{1 & 0 & 0 & 0 \\ 1 & 0 & -1 & 0 \\ 0 & 1 & -1 & 0 \\ 0 & 0 & -1 & 1}, \hspace*{2em}
   B = \mat{cccc}{0 & 0 & -1 & 1 \\ 1 & 0 & -1 & 0 \\ 0 & 1 & -1 & 0 \\ 1 & 0 & 0 & 0}, \hspace*{2em}
   V = \Mat{cccc}{0 & 1 & 1 & 1\\ 0 & 0 & 1 & 1 \\ 0 & 0 & 0 & 1 \\ 1 & 1 & 1 & 1 }.\]
Note that $A, B \in SL(4, \mathbb{Z})$ and the simplex represented by $V$ consists of all ordered triplets of numbers in the unit interval. 
The M\"onkemeyer Map $M: V \rightarrow V$ 
is given in homogeneous coordinates as 
\[ M (\mathbf{v}) = \left\lbrace \begin{array}{lcc} 
   A \mathbf{v} & \text{ if } & \mathbf{v} \in A^{-1}V \\ 
   B \mathbf{v} & \text{ if } & \mathbf{v} \in B^{-1}V \end{array} \right. ,\]
and when projected to $\mathbb{R}^3$ it takes the form 
\[ M(x, y, z) = \left\lbrace \begin{array}{lcc} 
        \left( \displaystyle\frac{x}{1-z}, \frac{x-z}{1-z} , \frac{y-z}{1-z} \right) & \text{ if } & x+z \leq 1 \\ \\
        \left( \displaystyle\frac{1-z}{x}, \frac{x-z}{x}, \frac{y-z}{x} \right) & \text{ if } & x+z \geq 1 \end{array} \right. .\] 
Note that $V = A^{-1}V \cup B^{-1}V$ and this sub-simplexes are the following tetrahedra (See Figure \ref{tetrahedron_V})
\[ A^{-1} V = \Mat{cccc}{0&1&1&1 \\ 0&1&0&1 \\ 0&1&0&0 \\ 1&2&1&1} \hspace*{2em} 
   B^{-1} V = \Mat{cccc}{1&1&1&1 \\ 1&1&0&1 \\ 1&1&0&0 \\ 1&2&1&1} .\]

\begin{figure}
\begin{center}
 \begin{tikzpicture}[scale=5.0]
 
 
  \draw[dashed] (0,0) -- (1,0);
  \draw[thick] (0,0) --(0.5,-0.25) --(1,0) --(1,0.5) --(0,0);
  \draw[thick] (0.5,-0.25) --(1,0.5);
  
  \node[black, left] at (0,0) {$\Mat{c}{0 \\ 0 \\ 0 \\ 1}$};
  \node[black, right] at (1,0.6) {$\Mat{c}{1 \\ 1 \\ 1 \\ 1}$};
  \node[black, right] at (1,0) {$\Mat{c}{1 \\ 1 \\ 0 \\ 1}$};
  \node[black, below] at (0.5,-0.25) {$\Mat{c}{1 \\ 0 \\ 0 \\ 1}$};
  

 \end{tikzpicture}
 
 \begin{tikzpicture}[scale=5.0]
 
  \draw[dashed] (0,0) -- (1,0);
  \draw[thick] (0,0) --(0.5,-0.25) --(1,0) --(0.5,0.25) --(0,0);
  \draw[thick] (0.5,0.25) --(0.5,-0.25);
  \draw[dotted] (0.5,-0.25) --(1,0.5);
  \draw[dotted] (1,0) --(1,0.5) --(0.5,0.25);
  
  
  \node[black, above right] at (0,-0.25) {$A^{-1}V$};
  
  \draw[dotted] (1.5,0) --(2.5,0);
  \draw[dashed] (2.5,0) --(2,0.25);
  \draw[thick] (2,-0.25) --(2.5,0) --(2.5,0.5) --(2,0.25) --(2,-0.25) --(2.5,0.5);
  \draw[dotted] (2,-0.25)-- (1.5, 0) --(2, 0.25); 
  
  
  \node[black, above left] at (2.5,-0.25)  {$B^{-1}V$};

 \end{tikzpicture}
 
 \caption{Partition of the base simplex for the 3-dimensional M\"onkemeyer Map}
 \label{tetrahedron_V}
\end{center}
\end{figure}
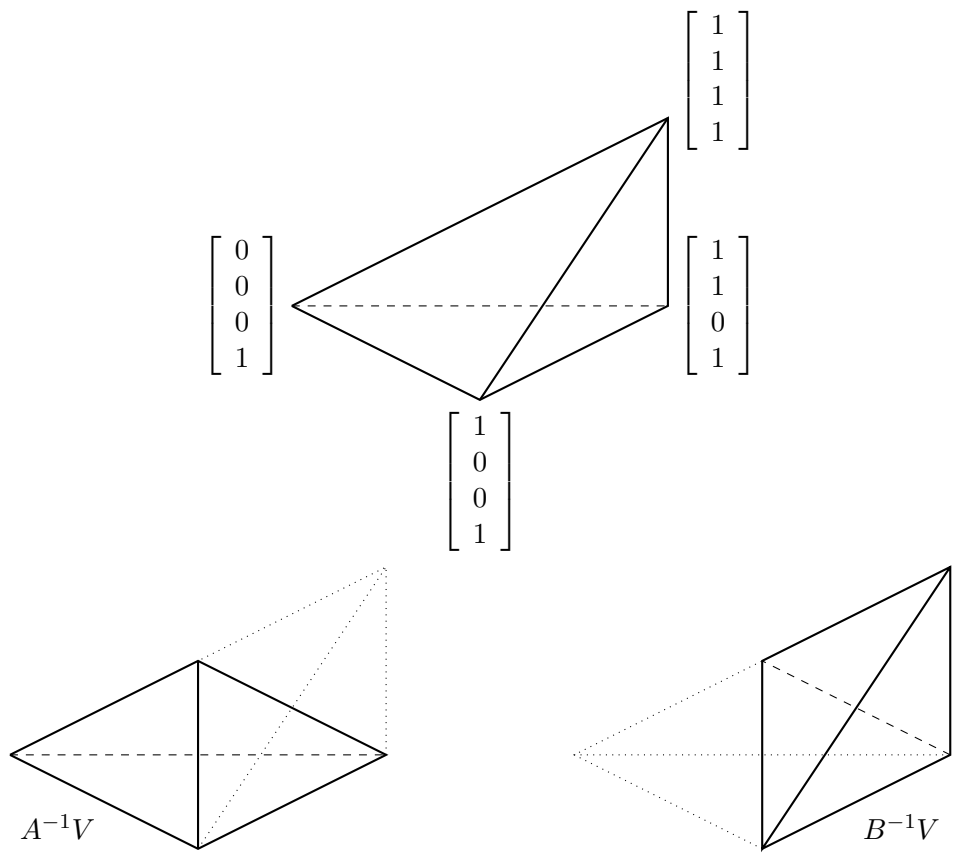
   
Again, the matrix $A$ fixes the point \[\mathbf{0} = \Mat{c}{0\\0\\0\\1}\] and any other point in $A^{-1}V$ is eventually mapped into $B^{-1}V$, so the first return map is defined piecewise on $V$.
In this case we have that for all $n \in \mathbb{Z}$ 
\[ A^{3n} = \mat{cccc}{1 & 0 & 0 & 0\\ 0 & 1 & 0 & 0 \\ 0 & 0 & 1 & 0 \\ -n & 0 & 0 & 1},\]
For $n \geq 1$ let $\mathbf{A}_n = B A^{3n -1}$, $\mathbf{B}_n = B A^{3n-2}$ and $\mathbf{C}_n =  B A^{3n-3}$. 
We have  
\[ \mathbf{A}_n = B A^{-1} A^{3n} = \mat{cccc}{-n & 0 & 0 & 1 \\ 0 & 1 & 0 & 0 \\ 0 & 0 & 1 & 0\\ 1 & 0 & 0 & 0}, \hspace*{2em} \mathbf{B}_n = B A^{-2} A^{3n} = \mat{cccc}{1-n & -1 & 0 & 1 \\ 1 & -1 & 1 & 0 \\ 1 & -1 & 0 & 0 \\ 1 & 0 & 0 & 0},\]
\[ \mathbf{C}_n = B A^{3(n-1)} = \mat{cccc}{1-n & 0 & -1 & 1 \\ 1 & 0 & -1 & 0 \\ 0 & 1 & -1 & 0 \\ 1 & 0 & 0 & 0}.\]
Consider again the simplexes $\mathbb{A}_n= \mathbf{A}_n^{-1} V$, $\mathbb{B}_n= \mathbf{B}_n^{-1} V$ and $\mathbb{C}_n= \mathbf{C}_n^{-1} V$. 
These simplexes are (See Figure \ref{gauss_3d})
\[ \mathbb{A}_n = \Mat{cccc}{1 & 1 & 1 & 1 \\ 0 & 0 & 1 & 1 \\ 0 & 0 & 0 & 1 \\ n & n+1 & n+1 & n+1}, \hspace*{2em}
   \mathbb{B}_n = \Mat{cccc}{1 & 1 & 1 & 1 \\ 1 & 1 & 1 & 0 \\ 0 & 0 & 1 & 0 \\ n & n+1 & n+1 & n}. \]
\[ \mathbb{C}_n = \Mat{cccc}{1 & 1 & 1 & 1 \\ 1 & 1 & 0 & 1 \\ 1 & 1 & 0 & 0 \\ n & n+1 & n & n}. \]

The map $G$ can be written again as
\[ G (\mathbf{v}) = \left\lbrace \begin{array}{lcc} 
   \mathbf{A}_n \mathbf{v} & \text{ if } & \mathbf{v} \in \mathbb{A}_n \\ 
   \mathbf{B}_n \mathbf{v} & \text{ if } & \mathbf{v} \in \mathbb{B}_n \\ 
   \mathbf{C}_n \mathbf{v} & \text{ if } & \mathbf{v} \in \mathbb{C}_n \\ 
   \mathbf{0} & \text{ if } & \mathbf{v} = \mathbf{0} \end{array} \right. .\] 
and when projected to $\mathbb{R}^3$, the map $G$ takes the form 
\[ G (x, y, z) = \left\lbrace \begin{array}{lcc} 
   \left( \displaystyle\frac{1}{x} - n, \frac{y}{x}, \frac{z}{x}\right) & \text{ on } & \mathbb{A}_n \\ \\
   \left( \displaystyle\frac{1-y}{x} - n + 1, \frac{x-y+z}{x}, \frac{x-y}{x}\right) & \text{ on } & \mathbb{B}_n \\ \\
   \left( \displaystyle\frac{1-z}{x} - n + 1, \frac{x-z}{x}, \frac{y-z}{x}\right) & \text{ on } & \mathbb{C}_n \\ \\
   (0,0) & \text{ if } & (x,y) = (0,0) \end{array} \right.\]
(The formulas can be worked to be independent of $n$.)

\begin{figure}
\begin{center}
 \begin{tikzpicture}[scale=5.0]
  \draw[fill=blue!10!white, draw=none] (0.33333333, 0.16666666) --(0.4, 0) --(0.16666666, -0.08333333) --(0.33333333, 0.16666666);
  \draw[dotted] (0,0) -- (1,0);
  \draw[dotted] (0,0) --(0.5,-0.25) --(1,0) --(1,0.5) --(0,0);
  \draw[dotted] (0.5,-0.25) --(1,0.5);
  
  \node[black, below left] at (0.16666666, -0.08333333) {$\Mat{c}{1 \\ 0 \\ 0 \\ n+1}$};
  \node[black, below] at (0.33333333, -0.16666666) {$\Mat{c}{1 \\ 0 \\ 0 \\ n}$};
  \node[black, above left] at (0.33333333, 0.16666666) {$\Mat{c}{1 \\ 1 \\ 1 \\ n+1}$};
  \node[black, right] at (0.4, 0) {$\Mat{c}{1 \\ 1 \\ 0 \\ n+1}$};
    
  \draw[thick] (0.33333333, 0.16666666) --(0.33333333, -0.16666666) --(0.16666666, -0.08333333) --(0.33333333, 0.16666666);
  \draw[thick] (0.33333333, 0.16666666) --(0.4,0) --(0.33333333, -0.16666666);
  \draw[thick, dashed] (0.4,0) --(0.16666666, -0.08333333);
    
  \node[black] at (0.65,-0.35) {$\mathbb{A}_n$};
  \node[white] at (1.5,0) {.};

 \end{tikzpicture}
  \begin{tikzpicture}[scale=5.0]
  \draw[fill=blue!10!white, draw=none] (0.33333333, 0.16666666) --(0.4, 0) --(0.33333333, -0.16666666) --(0.33333333, 0.16666666);
  \draw[dotted] (0,0) -- (0.4,0);
  \draw[dotted] (0.65,0) -- (1,0);
  \draw[dotted] (0,0) --(0.5,-0.25) --(1,0) --(1,0.5) --(0,0);
  \draw[dotted] (0.5,-0.25) --(1,0.5);
  
  \node[black, below] at (0.33333333, -0.16666666) {$\Mat{c}{1 \\ 0 \\ 0 \\ n}$};
  \node[black, above] at (0.33333333, 0.16666666) {$\Mat{c}{1 \\ 1 \\ 1 \\ n+1}$};
  \node[black, left] at (0.2, 0) {$\Mat{c}{1 \\ 1 \\ 0 \\ n+1}$};
  \node[black, right] at (0.66666666, 0) {$\Mat{c}{1 \\ 1 \\ 0 \\ n}$};
    
  \draw[thick] (0.33333333, 0.16666666) --(0.33333333, -0.16666666) --(0.65, 0) --(0.33333333, 0.16666666);
  \draw[thick, dashed] (0.4,0) --(0.33333333, 0.16666666);
  \draw[thick, dashed] (0.4,0) --(0.33333333, -0.16666666);
  \draw[thick, dashed] (0.4,0) --(0.65, 0);
    
  \node[black] at (0.65,-0.35) {$\mathbb{B}_n$};

 \end{tikzpicture}
  \begin{tikzpicture}[scale=5.0]
  \draw[fill=blue!10!white, draw=none] (0.33333333, 0.16666666) --(0.65, 0) --(0.33333333, -0.16666666) --(0.33333333, 0.16666666);
  \draw[dotted] (0,0) -- (1,0);
  \draw[dotted] (0,0) --(0.5,-0.25) --(1,0) --(1,0.5) --(0,0);
  \draw[dotted] (0.5,-0.25) --(1,0.5);
  
  \node[black, below] at (0.33333333, -0.16666666) {$\Mat{c}{1 \\ 0 \\ 0 \\ n}$};
  \node[black, above left] at (0.33333333, 0.16666666) {$\Mat{c}{1 \\ 1 \\ 1 \\ n+1}$};
  \node[black, above] at (0.66666666, 0.33333333) {$\Mat{c}{1 \\ 1 \\ 1 \\ n}$};
  \node[black, right] at (0.66666666, 0) {$\Mat{c}{1 \\ 1 \\ 0 \\ n}$};
    
  \draw[thick] (0.33333333, -0.16666666) --(0.65, 0) --(0.66666666, 0.33333333) --(0.33333333, 0.16666666) --(0.33333333, -0.16666666) --(0.66666666, 0.33333333);
  \draw[thick, dashed] (0.65, 0) --(0.33333333, 0.16666666);
    
  \node[black] at (0.65,-0.35) {$\mathbb{C}_n$};

 \end{tikzpicture}
 
 \caption{Partition of the base simplex for the 3-dimensional Gauss Map}
 \label{gauss_3d}
\end{center}
\end{figure}

In this case we also get a suspension of $SL(2, \mathbb{Z})$ in $\mathcal{M}_3 < SL(4, \mathbb{Z})$, generated by the matrices $\mathbf{A}_n$ and given by 
\[\mat{cc}{a & b \\ c & d} \mapsto \mat{cccc}{a & 0 & 0 & b \\ 0 & 1 & 0 & 0 \\ 0 & 0 & 1 & 0 \\ c & 0 & 0 & d}\]

\begin{question}
 Is there a subgroup of $\mathcal{M}_3$ homomorphic to $\mathcal{M}_2$?
\end{question}

\change

In order to have a preferred itinerary for the points in the faces and edges of the sub-simplexes we will take the convention that the tetrahedra $\mathbb{A}_n$, $\mathbb{B}_n$ and $\mathbb{C}_n$ do not include the shaded faces shown in Figure \ref{gauss_3d}. 

The base tetrahedron has six edges, namely
\[ \mathsf{A} = \Mat{cc}{0 & 1 \\ 0 & 0 \\ 0 & 0 \\ 1 & 1},
   \mathsf{B} = \Mat{cc}{0 & 1 \\ 0 & 1 \\ 0 & 0 \\ 1 & 1},
   \mathsf{C} = \Mat{cc}{0 & 1 \\ 0 & 1 \\ 0 & 1 \\ 1 & 1}, \] \[
   \mathsf{D} = \Mat{cc}{1 & 1 \\ 0 & 1 \\ 0 & 0 \\ 1 & 1},
   \mathsf{E} = \Mat{cc}{1 & 1 \\ 0 & 1 \\ 0 & 1 \\ 1 & 1} \text{ and }
   \mathsf{F} = \Mat{cc}{1 & 1 \\ 1 & 1 \\ 0 & 1 \\ 1 & 1}.
\]

The edge $\mathsf{A}$ is contained in the pieces $\mathbb{A}_n$ and it is mapped onto itself as a 1-dimensional Gauss Map, i.e. \[ \Mat{c}{x \\ 0 \\ 0 \\ 1} \in \mathsf{A}, \hspace*{2em} G\Mat{c}{x \\ 0 \\ 0 \\ 1} = \Mat{c}{1 - n x \\ 0 \\ 0 \\ x} \text{ where } \frac{1}{n+1} < x \leq \frac{1}{n},\] 
and projected it takes the form $G(x,0,0) = \left( 1/x \mod 1, 0, 0 \right)$.

The edge $\mathsf{B}$ is contained in the pieces $\mathbb{B}_n$ and it is mapped onto the edge $\mathsf{A}$ as a 1-dimensional Gauss Map, i.e. \[ \Mat{c}{x \\ x \\ 0 \\ 1} \in \mathsf{B}, \hspace*{2em} G\Mat{c}{x \\ x \\ 0 \\ 1} = \Mat{c}{1 - n x \\ 0 \\ 0 \\ x} \text{ where } \frac{1}{n+1} < x \leq \frac{1}{n},,\] 
and projected it takes the form $G(x,x,0) = \left( 1/x \mod 1, 0, 0 \right)$.

The edge $\mathsf{C}$ is contained in the pieces $\mathbb{C}_n$ and it is mapped onto the edge $\mathsf{A}$ as a 1-dimensional Gauss Map, i.e. \[ \Mat{c}{x \\ x \\ x \\ 1} \in \mathsf{C}, \hspace*{2em} G\Mat{c}{x \\ x \\ x \\ 1} = \Mat{c}{1 - n x \\ 0 \\ 0 \\ x} \text{ where } \frac{1}{n+1} < x \leq \frac{1}{n},,\] 
and projected it takes the form $G(x,x,x) = \left( 1/x \mod 1, 0, 0 \right)$.

The edge $\mathsf{D}$ is contained in the piece $\mathbb{B}_1$ and it is mapped onto the edge $\mathsf{C}$ in a reversed order, i.e. \[ \Mat{c}{1 \\ x \\ 0 \\ 1} \in \mathsf{D}, \hspace*{2em} G\Mat{c}{1 \\ x \\ 0 \\ 1} = \Mat{c}{1-x \\ 1-x \\ 1-x \\ 1},\] 
and projected it takes the form $G(1,x,0) = \left( 1-x, 1-x, 1-x \right)$.

The edge $\mathsf{E}$ is contained in the piece $\mathbb{C}_1$ and it is mapped onto the edge $\mathsf{B}$ in a reversed order, i.e. \[ \Mat{c}{1 \\ x \\ x \\ 1} \in \mathsf{E}, \hspace*{2em} G\Mat{c}{1 \\ x \\ x \\ 1} = \Mat{c}{1-x \\ 1-x \\ 0 \\ 1},\] 
and projected it takes the form $G(1,x,x) = \left( 1-x, 1-x, 0 \right)$.

The edge $\mathsf{F}$ is contained in the piece $\mathbb{C}_1$ and it is mapped onto the edge $\mathsf{C}$ in a reversed order, i.e. \[ \Mat{c}{1 \\ 1 \\ x \\ 1} \in \mathsf{F}, \hspace*{2em} G\Mat{c}{1 \\ 1 \\ x \\ 1} = \Mat{c}{1-x \\ 1-x \\ 1-x \\ 1},\] 
and projected it takes the form $G(1,1,x) = \left( 1-x, 1-x, 1-x \right)$.

From this we can see that any point on a preimage of an edge will be eventually mapped to the edge $\mathsf{A}$ where the dynamics are the same as the 1-dimensional Gauss Map. 
Now consider the four faces of the base tetrahedron, namely
\[
 \mathsf{AB} = \Mat{ccc}{0 & 1 & 1 \\ 0 & 0 & 1 \\ 0 & 0 & 0 \\ 1 & 1 & 1}, 
 \mathsf{BC} = \Mat{ccc}{0 & 1 & 1 \\ 0 & 1 & 1 \\ 0 & 0 & 1 \\ 1 & 1 & 1}, 
 \mathsf{AC} = \Mat{ccc}{0 & 1 & 1 \\ 0 & 0 & 1 \\ 0 & 0 & 1 \\ 1 & 1 & 1} \text{ and } 
 \mathsf{FR} = \Mat{ccc}{1 & 1 & 1 \\ 0 & 1 & 1 \\ 0 & 0 & 1 \\ 1 & 1 & 1}.
\]

To parametrize the interior of the faces we will take pairs of real numbers $(x, y)$ such that $0 < y < x < 1$. With these parameters a point in the face $\mathsf{AB}$ is of the form $(x,y)_{\mathsf{AB}} = (x, y, 0)$, a point in the face $\mathsf{AC}$ is of the form $(x,y)_{\mathsf{AC}} = (x, y, y)$, a point in the face $\mathsf{BC}$ is of the form $(x,y)_{\mathsf{BC}} = (x, x, y)$, and a point in the face $\mathsf{FR}$ is of the form $(x,y)_{\mathsf{FR}} = (1, x, y)$.

The frontal face $\mathsf{FR}$ is contained in the piece $\mathbb{C}_1$ and it is mapped to the face $\mathsf{BC}$ as follows 
\[\Mat{c}{1 \\ x \\ y \\ 1} \in \mathsf{FR}, \hspace*{2em} G \Mat{c}{1 \\ x \\ y \\ 1} = \Mat{c}{1-y \\ 1-y \\ x-y \\ 1}, \hspace*{1em} \text{ we write it as } \hspace*{1em} (x,y)_{\mathsf{FR}} \mapsto (1-y, x-y)_{\mathsf{BC}} .\]

The faces $\mathsf{AB}$, $\mathsf{BC}$ and $\mathsf{AC}$ have its own sub partitions in a similar fashion as the triangle for the 2-dimensional case (see Figures \ref{gauss_2d} and \ref{parameter_faces}), and the dynamics are related. 
For the partition in the 2-dimensional case, let's call \emph{lower type} triangles those which share an edge with the base of the triangle, and \emph{upper type} triangles those which share an edge with the diagonal side of the triangle.
All triangles of the same type are mapped in a similar way to the entire triangle; the lower type triangles are \emph{reflected} vertically and scaled, and the upper type triangles are \emph{rotated} and scaled. 
More precisely, the map on a lower type triangle takes the form \[(x,y) \mapsto \left( \frac{1}{x} \mod 1, \frac{y}{x}\right),\] and the map on a upper type triangle takes the form \[(x,y) \mapsto \left( \frac{1-y}{x} \mod 1, 1-\frac{y}{x}\right).\]
The dynamics on the 3-dimensional case are similar.
A face will have a partition in upper and lower type triangles (see Figure \ref{parameter_faces}), and these pieces will be \emph{reflected} or \emph{rotated} respectively and scaled to fit a face which might not be the one they come from.  

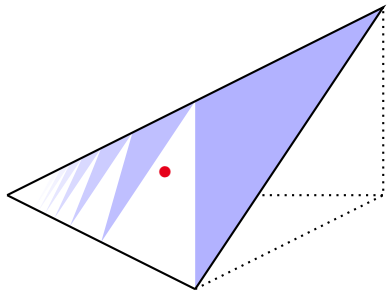
\begin{figure}
 \begin{center}
   \begin{tikzpicture}[scale=5.0]
  \draw[fill=blue!30!white, draw=none] (0.5,0.5) --(1,0) --(1,1) --(0.5,0.5);
  \draw[fill=blue!25!white, draw=none] (0.5,0.5) --(0.5,0) --(0.33333333,0.33333333) --(0.5,0.5);
  \draw[fill=blue!20!white, draw=none] (0.25,0.25) --(0.33333333,0) --(0.33333333,0.33333333) --(0.25,0.25);
  \draw[fill=blue!15!white, draw=none] (0.25,0.25) --(0.25,0) --(0.2,0.2) --(0.25,0.25);
  \draw[fill=blue!10!white, draw=none] (0.16666666,0.16666666) --(0.2,0) --(0.2,0.2) --(0.16666666,0.16666666);
  \draw[fill=blue!5!white, draw=none] (0.16666666,0.16666666) --(0.16666666,0) --(0.14285714,0.14285714) --(0.16666666,0.16666666);
  \draw[fill=blue!2!white, draw=none] (0.125,0.125) --(0.14285714,0) --(0.14285714,0.14285714) --(0.125,0.125);
   
  \draw[thick] (0,0) --(1,0) --(1,1) --(0,0);
  
  \node[red!90!blue] at (0.56, 0.2690121944) {\textbullet};
  \node[black] at (0.5, -0.15) {$(x, y)$};
 \end{tikzpicture}
 
 \begin{tikzpicture}[scale=5.0]
  \draw[fill=blue!30!white, draw=none] (0.5,0) --(1,0) --(0.5,-0.25) --(0.5,0);
  \draw[fill=blue!25!white, draw=none] (0.25, -0.125) --(0.5,0) --(0.33333333,0) --(0.25, -0.125);
  \draw[fill=blue!20!white, draw=none] (0.25,0) --(0.33333333,0) --(0.16666666,-0.08333333) --(0.25,0);
  \draw[fill=blue!15!white, draw=none] (0.125,-0.0625) --(0.25,0) --(0.2,0) --(0.125,-0.0625);
  \draw[fill=blue!10!white, draw=none] (0.16666666,0) --(0.2,0) --(0.1,-0.05) --(0.16666666,0);
  \draw[fill=blue!5!white, draw=none] (0.08333333,-0.04166666) --(0.16666666,0) --(0.14285714,0) --(0.08333333,-0.04166666);
  \draw[fill=blue!2!white, draw=none] (0.125,0) --(0.14285714,0) --(0.0714285714,-0.0357142857) --(0.125,0);
 
  \draw[thick] (0,0) -- (1,0);
  \draw[thick] (0,0) --(0.5,-0.25) --(1,0);
  \draw[thick, dotted] (0.5,-0.25) --(1,0.5) --(0,0);
  \draw[dotted, thick] (1,0) --(1,0.5);
  
  \node[red!90!blue] at (0.42, -0.08) {\textbullet};
  \node[black] at (0.5, -0.35) {$(x,y)_{\mathsf{AB}} = (x, y, 0) \in \mathsf{AB}$};
  \node[white] at (1.5, 1) {};
 \end{tikzpicture}
 \begin{tikzpicture}[scale=5.0]
  \draw[thick, dotted] (0.6,0) -- (1,0);
  \draw[fill=blue!30!white, draw=none] (0.5,0.25) --(1,0.5) --(0.5,-0.25) --(0.5,0.25);
  \draw[fill=blue!25!white, draw=none] (0.25, -0.125) --(0.5,0.25) --(0.33333333,0.16666666) --(0.25, -0.125);
  \draw[fill=blue!20!white, draw=none] (0.25,0.125) --(0.33333333,0.16666666) --(0.16666666,-0.08333333) --(0.25,0.125);
  \draw[fill=blue!15!white, draw=none] (0.125,-0.0625) --(0.25,0.125) --(0.2,0.1) --(0.125,-0.0625);
  \draw[fill=blue!10!white, draw=none] (0.16666666,0.08333333) --(0.2,0.1) --(0.1,-0.05) --(0.16666666,0.08333333);
  \draw[fill=blue!5!white, draw=none] (0.08333333,-0.04166666) --(0.16666666,0.08333333) --(0.14285714,0.0714285714) --(0.08333333,-0.04166666);
  \draw[fill=blue!2!white, draw=none] (0.125,0.0625) --(0.14285714,0.0714285714) --(0.0714285714,-0.0357142857) --(0.125,0.0625);
 
  \draw[thick] (0,0) --(0.5,-0.25);
  \draw[thick] (0.5,-0.25) --(1,0.5) --(0,0);
  \draw[dotted, thick] (0.5,-0.25) --(1,0) --(1,0.5);
  \node[red!90!blue] at (0.42, 0.05944) {\textbullet};
  \node[black] at (0.5, -0.35) {$(x,y)_{\mathsf{AC}} = (x, y, y) \in \mathsf{AC}$};
 \end{tikzpicture} 
 \end{center}
 \caption{Parametrization of the faces $\mathsf{AB}$ and $\mathsf{AC}$ with \emph{upper} and \emph{lower} type pieces indicated}
 \label{parameter_faces}
\end{figure}

The partition on the face $\mathsf{AB}$ has symbols $\mathbb{A}_n$ on the lower side and symbols $\mathbb{B}_n$ on the upper side (compare Figures \ref{gauss_3d} and \ref{parameter_faces}). 
The triangles with symbol $\mathbb{A}_n$ are \emph{reflected} and mapped to the face $\mathsf{AB}$.
The triangles with symbol $\mathbb{B}_n$ are \emph{rotated} and mapped to the face $\mathsf{AC}$.
On \emph{face coordinates} this looks like \[ (x,y)_{\mathsf{AB}} \xrightarrow{\mathbb{A}_n} \left( \frac{1}{x} \mod 1, \frac{y}{x}\right)_{\mathsf{AB}}, \hspace*{1em} \text{and} \hspace*{1em} (x,y)_{\mathsf{AB}} \xrightarrow{\mathbb{B}_n} \left( \frac{1-y}{x} \mod 1, 1-\frac{y}{x}\right)_{\mathsf{AC}}.\]

The partition on the face $\mathsf{BC}$ has symbols $\mathbb{B}_n$ on the lower side and symbols $\mathbb{C}_n$ on the upper side. 
The triangles with symbol $\mathbb{B}_n$ are \emph{reflected} and mapped to the face $\mathsf{AB}$.
The triangles with symbol $\mathbb{C}_n$ are \emph{rotated} and mapped to the face $\mathsf{AC}$.

The partition on the face $\mathsf{AC}$ has symbols $\mathbb{A}_n$ on the lower side and symbols $\mathbb{C}_n$ on the upper side. 
The triangles with symbol $\mathbb{A}_n$ are \emph{reflected} and mapped to the face $\mathsf{AC}$.
The triangles with symbol $\mathbb{C}_n$ are \emph{rotated} and mapped to the face $\mathsf{AB}$.

The dynamics of the faces can follow any infinite path in the graph shown in Figure \ref{faces_dynamics}. 
This graph shows that any point in a preimage of a face will eventually be \emph{jumping} between the faces $\mathsf{AB}$ and $\mathsf{AC}$ following similar dynamics as on the 2-dimensional case, but on two different triangles.
Note that this includes itineraries consisting only of symbols $\mathbb{A}_n$, which again corresponds to the dynamics on the edges. 
This means that, in this \emph{jumping} dynamics, points that stay in one face are actually in the common edge $\mathsf{A}$.
Note also that, with the convention on the pieces, there are no defined preimages of the frontal face $\mathsf{FR}$.


\begin{figure}
\begin{center}
 \begin{tikzpicture}[scale=5.0]
  
  \draw[black] (0, 0) circle (0.1);
  \node[black] at (0, 0) {$\mathsf{AB}$};
  \draw[black] (1, 0) circle (0.1);
  \node[black] at (1, 0) {$\mathsf{AC}$};
  \draw[black] (0.5, -0.5) circle (0.1);
  \node[black] at (0.5,-0.5) {$\mathsf{BC}$};
  \draw[black] (0, -0.5) circle (0.1);
  \node[black] at (0, -0.5) {$\mathsf{FR}$};
  
  \draw[<-, thick] (0.15, 0) --(0.85,0);
  \draw[->, thick] (0.1,0.1) to [out=45,in=135] 
                   (0.9,0.1);
  \draw[->, thick] (-0.1,-0.1) to [out=225,in=270] 
                   (-0.4,0) to [out=90,in=135] 
                   (-0.1,0.1);
  \draw[->, thick] (1.1,0.1) to [out=45,in=90] 
                   (1.4,0) to [out=270,in=315] 
                   (1.1,-0.1);
  \draw[->, thick] (0.4, -0.4) --(0.1,-0.1);
  \draw[->, thick] (0.6, -0.4) --(0.9,-0.1);
  \draw[->, thick] (0.15, -0.5) --(0.35,-0.5);
  
  \node[black] at (-0.5,0) {$\mathbb{A}_n$};
  \node[black] at (1.5,0) {$\mathbb{A}_n$};
  \node[black, above] at (0.5,0.3) {$\mathbb{B}_n$};
  \node[black, above] at (0.5,0) {$\mathbb{C}_n$};
  \node[black] at (0.35,-0.25) {$\mathbb{B}_n$};
  \node[black] at (0.65,-0.25) {$\mathbb{C}_n$};
  \node[black] at (0.25,-0.45) {$\mathbb{C}_1$};
  
 \end{tikzpicture}
\end{center}
 \caption{Sub-shift of the face dynamics for the 3-dimensional Gauss Map.}
 \label{faces_dynamics}
\end{figure}
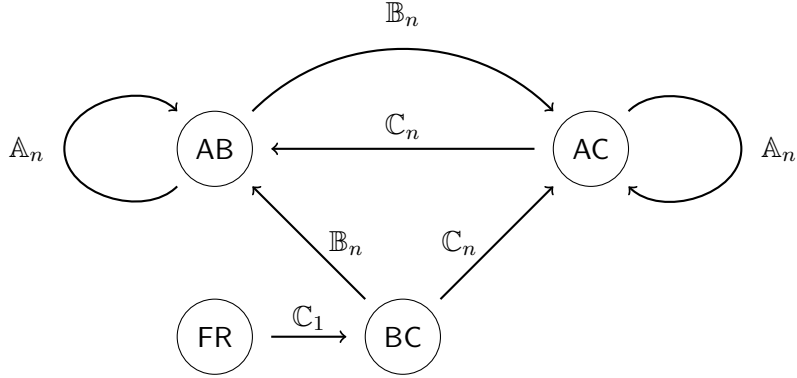

As in previous cases, we have that rational points approximate non-rational points.

\begin{prop}
 Points with rational coordinates are preimages of zero.
\end{prop}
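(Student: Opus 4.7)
The plan is to adapt the 2-dimensional proof to this setting. Represent a rational point in $V$ projectively by an integer vector $\mathbf{v} = (p_1, p_2, p_3, s)^T$ satisfying $0 \leq p_3 \leq p_2 \leq p_1 \leq s$, with strict inequalities exactly when the point lies in the interior of the tetrahedron. Since the matrices $\mathbf{A}_n$, $\mathbf{B}_n$, $\mathbf{C}_n$ are integer valued and $G$ preserves $V$, both integrality and the coordinate ordering are preserved along every orbit.

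The first key step is the explicit computation
\[
\mathbf{A}_n \mathbf{v} = \mat{c}{s - n p_1 \\ p_2 \\ p_3 \\ p_1}, \qquad
\mathbf{B}_n \mathbf{v} = \mat{c}{s - p_2 + (1-n) p_1 \\ p_1 - p_2 + p_3 \\ p_1 - p_2 \\ p_1}, \qquad
\mathbf{C}_n \mathbf{v} = \mat{c}{s - p_3 + (1-n) p_1 \\ p_1 - p_3 \\ p_2 - p_3 \\ p_1}.
\]
In every case the new last coordinate equals the previous $p_1$. Hence, for a strictly interior rational point (where $p_1 < s$) the denominator drops by at least one under a single iteration of $G$; an orbit that stays in the interior would therefore produce an infinite strictly decreasing sequence of positive integers, a contradiction. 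So any orbit of an interior rational point must eventually land on a face of $V$.

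I would then finish by induction on the codimension of the boundary stratum containing the orbit. The frontal face $\mathsf{FR}$ is sent by $G$ into $\mathsf{BC}$, and afterwards the orbit cycles among $\mathsf{AB}$, $\mathsf{BC}$, $\mathsf{AC}$ as in Figure \ref{faces_dynamics}; on each of these faces the induced dynamics are isomorphic to the 2-dimensional Gauss Map, and the same decreasing-denominator argument (or direct invocation of the corresponding 2-dimensional proposition, after identifying the face with a copy of the 2-dimensional simplex $V$) forces the orbit eventually onto an edge. On every edge, the explicit formulas displayed above for $\mathsf{A}, \mathsf{B}, \mathsf{C}, \mathsf{D}, \mathsf{E}, \mathsf{F}$ show that $G$ reduces to the 1-dimensional Gauss Map, so by the first proposition of this type the rational point is mapped to zero.

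The main obstacle is making the boundary descent fully rigorous: one must verify that the induced dynamics on each face really do strictly decrease an appropriate integer denominator, so that a rational orbit cannot cycle forever between $\mathsf{AB}$, $\mathsf{BC}$ and $\mathsf{AC}$, and that the initial $\mathsf{FR}$ case is absorbed cleanly into this induction. The most elegant packaging is probably to show that on each face the last coordinate of the projective integer representative again strictly decreases under $G$ (as it does in the 2-dimensional proposition), so the argument runs uniformly in dimension from the top stratum down to the vertex $\mathbf{0}$.
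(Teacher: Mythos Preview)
Your proposal is correct and follows essentially the same approach as the paper's own proof, which is explicitly labeled a ``Sketch of the proof'' and contains strictly less detail than what you have written: the paper simply asserts that the entries form a strictly decreasing integer sequence forcing the orbit onto a lower-dimensional facet, and then appeals to the previous dimension. Your explicit matrix computations (which are correct) and your discussion of the descent through faces and edges flesh out exactly what the paper leaves implicit; the only minor imprecision is the phrase ``isomorphic to the 2-dimensional Gauss Map'' for the face dynamics, since the orbit genuinely jumps among $\mathsf{AB}$, $\mathsf{AC}$, $\mathsf{BC}$ rather than staying on a single triangle --- but your own final paragraph already identifies this and proposes the clean fix (the last coordinate still strictly decreases on each face), which is precisely the uniform argument the paper gestures at.
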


\begin{proof}[Sketch of the proof]
 The proof for the 2-dimensional case can be generalized to any dimension:
 A point with rational coordinates is represented projectively by a vector $\mathbf{r} \in V$ with integer entries that satisfies the inequalities that define the base simplex $V$.
 Suppose $\mathbf{r}$ is an interior point of $V$, then the inequalities are strict. 
 Note that since $G$ preserves $V$, it also preserves the inequalities. 
 Also, since the defining matrices 
 have integer entries, they also keep the entries of $\mathbf{r}$ integer.
 
 Now suppose that $G^{k} (\mathbf{r})$ is a interior point for all $k \geq 0$. 
 Again, we can find that the entries of $G^{k} (\mathbf{r})$ form a strictly decreasing sequence of integers, and therefore they converge to zero, which would be a contradiction to $G^{k} (\mathbf{r})$ being a interior point for all $k \geq 0$.
 Some of the entries would be the first to get to zero, meaning that $\mathbf{r}$ is mapped to a lower dimensional facet of the simplex, where the sequences are no longer strictly decreasing. 
 
 By adapting the result on the previous dimension, the rational points are mapped to zero. 
\end{proof} 

Let $\mathbf{v} \in V$ be a non-rational point with itinerary $\mathbb{X}_{a_1}, \mathbb{X}_{a_2}, ...$, where $\mathbb{X}$ can be $\mathbb{A}$, $\mathbb{B}$ or $\mathbb{C}$ and $a_k > 0$.
Again, for each $n \geq 1$ let $\mathbf{I}_{\mathbf{v},n} = \left( \mathbf{X}_{a_1}^{-1} \mathbf{X}_{a_2}^{-1} \cdots \mathbf{X}_{a_n}^{-1} \right)$, where $\mathbf{X}$ is $\mathbf{A}$, $\mathbf{B}$ or $\mathbf{C}$ as indicated by the itinerary. 
The point $\mathbf{v}$ is in the simplex $\mathbb{I}_{n}(\mathbf{v}) = \mathbf{I}_{\mathbf{v},n} V$.
We have again that $\mathbb{I}_{n+1}(\mathbf{v}) \subset \mathbb{I}_{n}(\mathbf{v})$ and  $\mathbf{v} = \bigcap_{n \geq 1} \mathbb{I}_{n}(\mathbf{v})$.
Again, we call the sequence $\mathbb{I}_{n}(\mathbf{v})$ the approximating simplexes of $\mathbf{v}$.

\begin{prop}
 A non-rational point $\mathbf{v} \in V$ has an itinerary consisting only of symbols of type $\mathbb{A}_n$ if and only if it is a point in the edge $\mathsf{A}$.
 Moreover, if the itinerary is $\mathbb{A}_{a_1}, \mathbb{A}_{a_2}, ...$ then the irrational number on the first coordinate is given in continued fractions as $[0: a_1, a_2, ...]$.
\end{prop}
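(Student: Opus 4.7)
The plan is to mimic the 2-dimensional proof, exploiting the fact that the matrices $\mathbf{A}_n$ in dimension 3 are obtained from their 2-dimensional counterparts by inserting an identity block in the middle coordinates, so the whole iteration reduces transparently to the classical continued-fraction machinery on the $(1,4)$ corner.

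First I would dispatch the easy direction. Observe directly from the formula recalled above for $G$ restricted to the edge $\mathsf{A}$ that $\mathsf{A}$ is invariant under $G$ and that $G|_{\mathsf{A}}$ is the classical Gauss Map. Since the decomposition of $V$ into the pieces $\mathbb{A}_n,\mathbb{B}_n,\mathbb{C}_n$ places each non-zero point $(x,0,0,1)$ of $\mathsf{A}$ inside some $\mathbb{A}_n$ (with $1/(n+1)<x\le 1/n$), the itinerary of any non-rational point of $\mathsf{A}$ consists exclusively of symbols $\mathbb{A}_{a_k}$, and the sequence $(a_k)$ is precisely the continued-fraction expansion of the first coordinate.

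For the converse, let $\mathbf{v}$ have itinerary $\mathbb{A}_{a_1},\mathbb{A}_{a_2},\ldots$ and compute the approximating matrices $\mathbf{I}_{\mathbf{v},n}=\mathbf{A}_{a_1}^{-1}\mathbf{A}_{a_2}^{-1}\cdots\mathbf{A}_{a_n}^{-1}$. The key observation is that
\[
\mathbf{A}_n^{-1}=\mat{cccc}{0 & 0 & 0 & 1 \\ 0 & 1 & 0 & 0 \\ 0 & 0 & 1 & 0 \\ 1 & 0 & 0 & n},
\]
which has an identity block in the second-and-third coordinates and reproduces in its $(1,4)$ corner the matrix $\mathbf{A}_n^{-1}$ of the $1$-dimensional case. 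Since this block structure is preserved under multiplication, an induction (of the same shape as the earlier proposition giving $\mathbf{I}_{\mathbf{v},n}=\left(\begin{smallmatrix}p_{n-1}&p_n\\q_{n-1}&q_n\end{smallmatrix}\right)$ for the $1$-dimensional Gauss Map) yields
\[
\mathbf{I}_{\mathbf{v},n}=\mat{cccc}{p_{n-1} & 0 & 0 & p_n \\ 0 & 1 & 0 & 0 \\ 0 & 0 & 1 & 0 \\ q_{n-1} & 0 & 0 & q_n},
\]
where $p_k/q_k$ are the convergents of $[0:a_1,a_2,\ldots,a_k]$.

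Applying this to $V$ gives the approximating simplex
\[
\mathbb{I}_n(\mathbf{v})=\Mat{cccc}{p_n & p_{n-1}+p_n & p_{n-1}+p_n & p_{n-1}+p_n \\ 0 & 0 & 1 & 1 \\ 0 & 0 & 0 & 1 \\ q_n & q_{n-1}+q_n & q_{n-1}+q_n & q_{n-1}+q_n},
\]
whose projection to $\mathbb{R}^3$ is a tetrahedron with three of its four vertices on the edge $\mathsf{A}=\{y=z=0\}$ and the fourth at height $y,z\le 1/(q_{n-1}+q_n)$. Since $\mathbf{v}=\bigcap_{n\ge 1}\mathbb{I}_n(\mathbf{v})$ and $q_{n-1}+q_n\to\infty$, the $y$ and $z$ coordinates of $\mathbf{v}$ must vanish, while the first coordinate is squeezed between the convergents $p_n/q_n$ and the mediants $(p_{n-1}+p_n)/(q_{n-1}+q_n)$, forcing $x=[0:a_1,a_2,\ldots]$. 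The only step requiring care is the induction establishing the block form of $\mathbf{I}_{\mathbf{v},n}$, but this is a direct consequence of the identity block in $\mathbf{A}_n^{-1}$ and reduces verbatim to the $1$-dimensional calculation already carried out.
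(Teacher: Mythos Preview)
Your proof is correct and follows essentially the same route as the paper: both directions are handled identically, the block form of $\mathbf{I}_{\mathbf{v},n}$ is obtained from the $1$-dimensional result, and the conclusion comes from bounding the $y,z$ coordinates by $1/(q_{n-1}+q_n)\to 0$. One small slip: only \emph{two} of the four vertices of the projected tetrahedron lie on $\mathsf{A}$, not three (the third vertex has $y=1/(q_{n-1}+q_n)$, $z=0$), but this does not affect the argument since all four vertices still satisfy $y,z\le 1/(q_{n-1}+q_n)$ and convexity does the rest.
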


\begin{proof}
 The second part of the statement follows directly from the previous discussion on the dynamics restricted to the edge $\mathsf{A}$.
 For the first part, note that if a point is in the edge $\mathsf{A}$ then it is in an edge of some sub-simplex $\mathbb{A}_n$ and it is mapped again to the edge $\mathsf{A}$, and therefore its itinerary consists only of $\mathbb{A}_n$ symbols.
 
 Now suppose $\mathbf{v}$ has the itinerary $\mathbb{A}_{a_1}, \mathbb{A}_{a_2}, ...$, then by previous results the approximating simplexes of $\mathbf{v}$ are of the form 
 \[ \mathbb{I}_{n}(\mathbf{v}) = 
    \mat{cccc}{p_{n-1} & 0 & 0 & p_n \\ 0 & 1  & 0 & 0 \\ 0 & 0 & 1 & 0\\ q_{n-1} & 0 & 0 & q_n} V = 
    \Mat{cccc}{p_{n-1} & p_{n-1} + p_{n} & p_{n-1} + p_{n} & p_{n-1} + p_{n} \\ 0 & 0 & 1 & 1 \\ 0 & 0 & 0 & 1 \\ q_n & q_{n-1} + q_n & q_{n-1} + q_n & q_{n-1} + q_n},\]
 where $p_n/q_n$ are the convergents of the irrational number $[0: a_1, a_2, ...]$.
 This simplexes projected to $\mathbb{R}^3$ are tetrahedra with vertices 
 \[ \left( \frac{p_n}{q_n}, 0 , 0\right), \hspace*{1em} 
    \left( \frac{p_{n-1} + p_{n}}{q_{n-1} + q_n}, 0 , 0\right), \hspace*{1em} 
    \left( \frac{p_{n-1} + p_{n}}{q_{n-1} + q_n}, \frac{1}{q_{n-1} + q_n}  , 0\right), \] \[ \text{and} \hspace*{1em} 
    \left( \frac{p_{n-1} + p_{n}}{q_{n-1} + q_n}, \frac{1}{q_{n-1} + q_n}, \frac{1}{q_{n-1} + q_n}  \right).\]
 When projected to $\mathbb{R}^3$, the point $\mathbf{v} = (x, y, z)$ is in all of these tetrahedra. 
 Coordinate-wise, $x$ is approximated by the convergents $p_n/q_n$ and $y, z \leq (q_{n-1} + q_n)^{-1} \rightarrow 0$.
 Therefore, $(x, y, z) = ([0: a_1, a_2, ...], 0, 0)$.
\end{proof}
 
\begin{coro}
 Points whose itinerary has a tail of only symbols of type $\mathbb{A}_n$ are preimages of points in $\mathsf{A}$.
\end{coro}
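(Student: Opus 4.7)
The plan is to deduce the corollary directly from the preceding proposition via the symbolic conjugation that was invoked earlier (and established in \cite{Panti}): iterating $G$ corresponds to shifting the itinerary. Concretely, if the itinerary of $\mathbf{v}$ is $\mathbb{X}_{a_1}, \mathbb{X}_{a_2}, \ldots$, then the itinerary of $G^{k}(\mathbf{v})$ is $\mathbb{X}_{a_{k+1}}, \mathbb{X}_{a_{k+2}}, \ldots$. So the hypothesis that the itinerary has a tail consisting only of symbols of type $\mathbb{A}_n$ means exactly that there is some $k \geq 0$ for which $G^{k}(\mathbf{v})$ has an itinerary consisting \emph{only} of $\mathbb{A}$-symbols.

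With that in hand, the main step is a single application of the previous proposition: $G^{k}(\mathbf{v})$ lies in the edge $\mathsf{A}$. Therefore $\mathbf{v}$ is a preimage under $G^{k}$ of a point in $\mathsf{A}$, which is precisely the content of the corollary. If one wishes to also cover rational $\mathbf{v}$, this is already subsumed, since by an earlier proposition rational points are preimages of $\mathbf{0}$, and $\mathbf{0}$ is an endpoint of $\mathsf{A}$.

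There is no real obstacle here; the only care needed is to justify that \emph{tail} of pure $\mathbb{A}$-symbols in the itinerary is genuinely captured by iterating $G$ forward a finite number of times, which is immediate from the fact that $G$ acts on itineraries as the shift. So this corollary is essentially a one-line consequence of the proposition combined with the symbolic conjugation.
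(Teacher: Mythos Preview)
Your argument is correct and matches the paper's approach: the paper gives no explicit proof here, and in the analogous 2-dimensional corollary it simply remarks that the statement ``follows from the conjugation to symbolic dynamics,'' which is exactly the shift-plus-proposition reasoning you spell out. Your extra remark covering the rational case is a harmless bonus, since in the paper itineraries are only assigned to non-rational points anyway.
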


\begin{prop}
 A point has an itinerary with the pattern described by the graph on Figure \ref{characterization_faces_dynamics} if and only if it is on the face $\mathsf{AB}$ or $\mathsf{AC}$, depending on the label of the starting node.
\end{prop}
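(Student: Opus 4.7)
I would split the proof into a forward direction (points on $\mathsf{AB}\cup\mathsf{AC}$ produce itineraries matching the graph) and a reverse direction (an itinerary matching the graph forces the point to lie on the indicated face). Both rely on the sub-partitions of $\mathsf{AB}$ and $\mathsf{AC}$ described in the paragraphs immediately preceding the statement and on uniqueness of itineraries for $G$, as cited from \cite{Panti}.

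For the forward direction I would argue by induction on the iterate. Suppose $\mathbf{v}\in\mathsf{AB}$. The face $\mathsf{AB}$ is partitioned by the lower triangles $\mathbb{A}_n\cap\mathsf{AB}$ and the upper triangles $\mathbb{B}_n\cap\mathsf{AB}$, so (with the convention on pieces) exactly one such triangle contains $\mathbf{v}$. The transition formulas stated just above show that a first symbol $\mathbb{A}_n$ keeps $G(\mathbf{v})$ inside $\mathsf{AB}$, while $\mathbb{B}_n$ sends it into $\mathsf{AC}$; these are precisely the self-loop and the arrow $\mathsf{AB}\to\mathsf{AC}$ allowed at the node $\mathsf{AB}$ in the graph. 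The case $\mathbf{v}\in\mathsf{AC}$ is symmetric, with $\mathbb{A}_n$ giving the self-loop and $\mathbb{C}_n$ producing the arrow $\mathsf{AC}\to\mathsf{AB}$. Iterating yields the whole itinerary and shows it follows the graph, starting at the node corresponding to the face containing $\mathbf{v}$.

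For the reverse direction I would match the given point $\mathbf{v}$ with a reference point $\mathbf{w}$ on $\mathsf{AB}\cup\mathsf{AC}$ sharing the same itinerary. Given an admissible sequence $\mathbb{X}_{a_1},\mathbb{X}_{a_2},\ldots$ starting at $\mathsf{AB}$, the induced dynamics on $\mathsf{AB}\cup\mathsf{AC}$ is piecewise linear in exactly the same affine pieces that realize the 2-dimensional Gauss map of the previous section; in particular the approximating triangles $\mathbb{I}_n\cap\mathsf{AB}$ or $\mathbb{I}_n\cap\mathsf{AC}$ (according to the current node) form a decreasing nested sequence whose intersection is a single point $\mathbf{w}$ on the appropriate face. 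Since $\mathbf{v}$ has the same itinerary, $\mathbf{v}$ lies in every approximating tetrahedron $\mathbb{I}_n$, and by uniqueness of itineraries for $G$ one concludes $\mathbf{v}=\mathbf{w}$, which lies on the face labeled by the starting node.

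The main obstacle is the realizability claim used in the reverse direction: one needs to know that every sequence admissible in the graph is genuinely the itinerary of some point on $\mathsf{AB}\cup\mathsf{AC}$, so that $\mathbf{w}$ exists. This reduces to verifying that the pieces $\mathbb{A}_n\cap\mathsf{AB}$, $\mathbb{B}_n\cap\mathsf{AB}$ (and their $\mathsf{AC}$ counterparts) tile the respective faces and that the induced two-dimensional map is expanding in the correct sense, so that the nested preimages shrink to a single point exactly as in the 2-dimensional Gauss map. Once this is in place, the uniqueness of itineraries closes the argument immediately.
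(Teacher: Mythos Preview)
Your plan is sound and the forward direction matches the paper's. For the reverse direction you and the paper diverge in strategy, and it is worth seeing how.

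The paper does not build a reference point $\mathbf{w}$ on the face and invoke realizability. Instead it rewrites the admissible itinerary by grouping each maximal streak $\mathbb{A}_{a_1}\cdots\mathbb{A}_{a_k}$ together with the following $\mathbb{B}_n$ or $\mathbb{C}_n$ into compound symbols $\breve{\mathbb{B}}_{p/q,n}$, $\breve{\mathbb{C}}_{p/q,n}$ (with $p/q=[0;a_1,\ldots,a_k]$), computes the corresponding $4\times 4$ matrices explicitly, and then treats two cases. If only finitely many compound symbols occur, the tail is an infinite $\mathbb{A}$-streak, so the orbit hits the edge $\mathsf{A}$; one then traces back through the explicit matrices and checks each preimage lands alternately in $\mathsf{AB}$ and $\mathsf{AC}$. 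If infinitely many compound symbols occur, the explicit matrices show that every approximating simplex $\mathbb{I}_n(\mathbf{v})$ has an entire face contained in $\mathsf{AB}$; since these simplexes are nested and compact, their $\mathsf{AB}$-faces have nonempty intersection, and if $\mathbf{v}\notin\mathsf{AB}$ the set $\bigcap_n\mathbb{I}_n(\mathbf{v})$ would contain a nondegenerate segment, contradicting uniqueness of itineraries.

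Your route is cleaner conceptually: identify $\mathsf{AB}$ and $\mathsf{AC}$ via the face coordinates $(x,y)$, observe that the induced map is literally the $2$-dimensional Gauss map (with the upper-type label alternating between $\mathbb{B}$ and $\mathbb{C}$), and import realizability and shrinking of approximating triangles from the $2$-dimensional analysis. This handles both of the paper's cases uniformly and avoids the matrix bookkeeping. The price is exactly the obstacle you flag: you must justify that the face dynamics really is conjugate to the $2$-dimensional map so that every admissible sequence is realized on $\mathsf{AB}\cup\mathsf{AC}$. The face-coordinate formulas displayed just before the proposition give this identification on the nose, so the obstacle is removable, but you should state that identification explicitly rather than leave it as an analogy. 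The paper's approach, by contrast, is self-contained in dimension three and never appeals to surjectivity of the $2$-dimensional coding.
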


Note that the graph on Figure \ref{characterization_faces_dynamics} includes the case of points in the edge $\mathsf{A}$.

\begin{proof}
 The pattern described by the graph consists of streaks of symbols $\mathbb{A}_n$ followed by one symbol of type $\mathbb{B}_n$ or $\mathbb{C}_n$, and when one of these appears, the next \textemdash{}if there is any\textemdash{} has to be of the other type.
 By our previous discussion, the itinerary of a point in one of the faces $\mathsf{AB}$ or $\mathsf{AC}$ has this pattern.
 
 \begin{figure}
 \begin{center}
 \begin{tikzpicture}[scale=5.0]
  
  \draw[black] (0, 0) circle (0.1);
  \node[black] at (0, 0) {$\mathsf{AB}$};
  \draw[black] (1, 0) circle (0.1);
  \node[black] at (1, 0) {$\mathsf{AC}$};
  
  \draw[->, thick] (0.1,0.1) to [out=45,in=135] 
                   (0.9,0.1);
  \draw[<-, thick] (0.1,-0.1) to [out=315,in=225] 
                   (0.9,-0.1);
  \draw[->, thick] (-0.1,-0.1) to [out=225,in=270] 
                   (-0.4,0) to [out=90,in=135] 
                   (-0.1,0.1);
  \draw[->, thick] (1.1,0.1) to [out=45,in=90] 
                   (1.4,0) to [out=270,in=315] 
                   (1.1,-0.1);
  
  \node[black] at (-0.5,0) {$\mathbb{A}_n$};
  \node[black] at (1.5,0) {$\mathbb{A}_n$};
  \node[black, above] at (0.5,0.3) {$\mathbb{B}_n$};
  \node[black, below] at (0.5,-0.3) {$\mathbb{C}_n$};
  
 \end{tikzpicture}
 \end{center}
 \caption{Characterizing sub-shift for the 2-dimensional dynamics in the 3-dimensional Gauss Map.}
 \label{characterization_faces_dynamics}
 \end{figure}
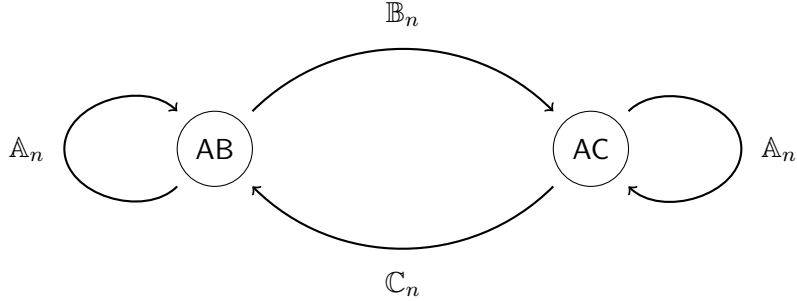

 Let $\mathbf{v}$ be a point with itinerary described by the graph on Figure \ref{characterization_faces_dynamics}.
 Suppose that the starting node is $\mathsf{AB}$, the other case is similar.
 
 To each finite streak $\mathbb{A}_{a_1} \mathbb{A}_{a_2} \cdots \mathbb{A}_{a_k}$ we associate the rational number $\frac{p}{q} = [0; a_1, a_2, ..., a_k]$. 
 If the streak is infinite, $\mathbb{A}_{a_1} \mathbb{A}_{a_2} \cdots$, we associate the irrational number $\alpha = [0; a_1, a_2, ...]$, and we write the infinite streak as a symbol $\mathbb{A}_{\alpha}$.
 For finite streaks we have the matrix \[\left( \mathbf{A}_{a_1}^{-1}  \mathbf{A}_{a_2}^{-1} \cdots \mathbf{A}_{a_k}^{-1} \right) = \mat{cccc}{r&0&0&p \\ 0&1&0&0 \\ 0&0&1&0 \\ s&0&0&q},\] where $\frac{r}{s} = [0; a_1, a_2, ..., a_{k-1}]$.
 For empty streaks the associated rational number will be $0$ and the matrix will be the identity.
 
 Each finite streak is followed by a symbol $\mathbb{B}_n$ or $\mathbb{C}_n$, we will denote \[ \breve{\mathbb{B}}_{\frac{p}{q}, n}  = \mathbb{A}_{a_1} \mathbb{A}_{a_2} \cdots \mathbb{A}_{a_k} \mathbb{B}_{n} \hspace*{2em} \text{and} \hspace*{2em}  \breve{\mathbb{C}}_{\frac{p}{q}, n}  = \mathbb{A}_{a_1} \mathbb{A}_{a_2} \cdots \mathbb{A}_{a_k} \mathbb{C}_{n}.\]
 For these words the corresponding matrices are \[ \breve{\mathbf{B}}_{\frac{p}{q}, n}^{-1} := \mat{cccc}{p&0&-p&r+np \\ 0&0&-1&1 \\ 0&1&-1&0 \\ q&0&-q&s+nq} \hspace*{1em} \text{and} \hspace*{1em} \breve{\mathbf{C}}_{\frac{p}{q}, n}^{-1} := \mat{cccc}{p&-p&0&r+np \\ 0&-1&1&1 \\ 0&-1&0&1 \\ q&-q&0&s+nq}.\]
 
 We can rewrite the itinerary of $\mathbf{v}$ either as infinitely many of the symbols $\breve{\mathbb{B}}_{\frac{p}{q}, n}$ and $ \breve{\mathbb{C}}_{\frac{p}{q}, n}, $ or as a finite amount of those symbols followed by a $\mathbb{A}_{\alpha}$.
 Suppose we have the second case, the itinerary is $\breve{\mathbb{B}}_{\frac{p_1}{q_1}, n_1} \breve{\mathbb{C}}_{\frac{p_2}{q_2}, n_2} \cdots \breve{\mathbb{X}}_{\frac{p_j}{q_J}, n_j} \mathbb{A}_{\alpha}$, and the point $\mathbf{v}$ is a preimage of the point \[ \mathbf{v}_j = \Mat{c}{\alpha\\0\\0\\1} \in \mathsf{A}.\]
 We trace back the point to the face $\mathsf{AB}$.
 For the $j$-th symbol of the rewritten itinerary we have \[ \breve{\mathbf{B}}_{\frac{p}{q}, n}^{-1} \mathbf{v}_j = \Mat{c}{p(\alpha+n) + r \\1\\0\\ q(\alpha+n) + s}  \in \mathsf{AB}, \hspace*{1em} \text{and} \hspace*{1em} \breve{\mathbf{C}}_{\frac{p}{q}, n}^{-1} \mathbf{v}_j = \Mat{c}{p(\alpha+n) + r \\1\\1\\ q(\alpha+n) + s}  \in \mathsf{AC}. \]
 Now, for any point $\mathbf{r} \in \mathsf{AC}$ the previous symbol must be of the type $\breve{\mathbb{B}}_{\frac{p}{q}, n}$ and we have that \[ \breve{\mathbf{B}}_{\frac{p}{q}, n}^{-1} \mathbf{r} = \breve{\mathbf{B}}_{\frac{p}{q}, n}^{-1} \Mat{c}{x\\y\\y\\1} = \Mat{c}{p(x-y+n) + r \\1-y\\0\\ q(x-y+n) + s}  \in \mathsf{AB}. \]
 Similarly, for any point $\mathbf{r} \in \mathsf{AB}$ the previous symbol must be of the type $\breve{\mathbb{C}}_{\frac{p}{q}, n}$ and we get $\breve{\mathbf{C}}_{\frac{p}{q}, n}^{-1} \mathbf{r} \in \mathsf{AC}$.
 So, we have that the preimages of $\mathbf{v}_j$, under the rewritten itinerary, jump between the faces $\mathsf{AB}$ and $\mathsf{AC}$ to end up with $\mathbf{v} \in \mathsf{AB}$ since the first symbol is $\breve{\mathbb{B}}_{\frac{p_1}{q_1}, n_1}$. 
 
 Now suppose that the itinerary of $\mathbf{v}$ consists of infinitely many streaks.
 As we will see, all the approximating simplexes $\mathbb{I}_{n}(\mathbf{v})$ under the rewritten itinerary have a face contained in $\mathsf{AB}$ and that implies that $\mathbf{v} \in \mathsf{AB}$. If that were not the case, by convexity the set \[\displaystyle\bigcap_{n \geq 0}\mathbb{I}_{n}(\mathbf{v})\] would contain a segment connecting $\mathbf{v}$ and some point in $\mathsf{AB}$, and that would contradict the uniqueness of itineraries.

 For any rational $p/q$ and any $n>0$ the simplex $\breve{\mathbf{B}}_{\frac{p}{q},  n }^{-1} V$ has a face contained in $\mathsf{AB}$ and the simplex $\breve{\mathbf{C}}_{\frac{p}{q},  n }^{-1} V$ has a face contained in $\mathsf{AC}$.
 As we have seen, $ \breve{\mathbf{B}}_{\frac{p}{q}, n}^{-1}$ maps points in $\mathsf{AC}$ to $\mathsf{AB}$ and $\breve{\mathbf{C}}_{\frac{p}{q}, n}^{-1}$ maps points in $\mathsf{AB}$ to $\mathsf{AC}$, so all approximating simplexes have a face contained in $\mathsf{AB}$ given that the first symbol is $\breve{\mathbb{B}}_{\frac{p_1}{q_1}, n_1}$. 
\end{proof}

\begin{coro}
 A point with an itinerary that eventually falls in the pattern described by Figure \ref{characterization_faces_dynamics} is a preimage of a point in a face of $V$.
\end{coro}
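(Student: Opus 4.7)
The plan is to reduce the corollary directly to the previous proposition by invoking the fact that the map $G$ is conjugate to the shift on its space of itineraries. Concretely, ``eventually falls in the pattern'' means there exists a non-negative integer $k$ such that the suffix of the itinerary of $\mathbf{v}$ starting at position $k+1$ is admissible for the subshift depicted in Figure \ref{characterization_faces_dynamics} (starting at one of the two labelled nodes). Since itinerary assignment intertwines $G$ with the shift map $\sigma$ on symbol sequences, this suffix is exactly the itinerary of $G^{k}(\mathbf{v})$.

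Next, I would apply the previous proposition to the point $G^{k}(\mathbf{v})$: since its itinerary is admissible for the characterizing sub-shift and begins with a node labelled $\mathsf{AB}$ or $\mathsf{AC}$, the proposition forces $G^{k}(\mathbf{v})$ to lie in the corresponding face. Therefore $\mathbf{v}$ is a preimage under $G^{k}$ of a point in a face of $V$, which is the content of the corollary. If $k=0$, the point $\mathbf{v}$ is itself on a face, and this is consistent with the convention that a point is a (zero-th) preimage of itself.

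The only issue worth double-checking is that the symbolic coding is genuinely injective, so that the itinerary of $G^{k}(\mathbf{v})$ is unambiguously the shifted sequence. This is guaranteed by the uniqueness of itineraries invoked earlier (via \cite{Panti}) together with the convention adopted on the boundaries of the pieces $\mathbb{A}_n$, $\mathbb{B}_n$, $\mathbb{C}_n$, which was chosen precisely so that each point of $V\setminus\{\mathbf{0}\}$ has a unique itinerary. With that in hand, the corollary is immediate and there is no substantive obstacle; the entire content lies in the previous proposition, and this statement is simply its shift-equivariant consequence.
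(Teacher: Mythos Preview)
Your proposal is correct and is exactly the argument the paper has in mind: the corollary is stated without proof as an immediate consequence of the preceding proposition via the conjugation of $G$ with the shift (the same justification the paper spells out for the analogous 2-dimensional corollary). There is nothing to add.
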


\begin{question}
 Is there an arithmetic/algebraic relation between the coordinates of a preimage of a face point?
\end{question}

Let $0 < y < x < 1$. For which numbers $z < y$ the point \[\mathbf{v} = \Mat{c}{x\\y\\z\\1} \] is a preimage of a face point?
As we have seen, the preimages of faces are triangles with rational vertices. 
These triangles are subsets of planes defined by linear equations with rational coefficients. 
We can express a number $z$ in one of these preimages as $z = r x + s y + t$ with $r,s,t$ rational.

\begin{conjecture}
 Let \[\mathbf{v} = \Mat{c}{x\\y\\z\\1} \in V.\]
 Consider the vector space over $\mathbb{Q}$ spanned by $\lbrace x, y, z, 1 \rbrace$ and let $d$ be its dimension.
 If $d=1$ then $\mathbf{v}$ is a preimage of a vertex.
 If $d=2$ then $\mathbf{v}$ is a preimage of an edge.
 If $d=3$ then $\mathbf{v}$ is a preimage of a face.
 If $d\geq4$ then $\mathbf{v}$ stays an interior point.
 
 Moreover, if $d=4$ and $x, y, z$ are rational independent then the point is eventually periodic. 
\end{conjecture}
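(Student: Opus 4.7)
The plan is to show first that the $\mathbb{Q}$-dimension $d := \dim_{\mathbb{Q}} \operatorname{span}\{x, y, z, 1\}$ is invariant under the map $G$, and then to deduce each case from this invariant. Invariance holds because every branch of $G$ multiplies the homogeneous coordinates by a matrix in $\mathcal{M}_3 \subset SL(4, \mathbb{Z})$: the entries of the image are $\mathbb{Q}$-linear combinations of $\{x,y,z,1\}$, the integer inverse matrix gives the reverse inclusion, and rescaling by the last coordinate does not alter the $\mathbb{Q}$-dimension of the affine tuple. Direct inspection shows the vertex $\mathbf{0}$ has $d = 1$, each edge has $d \leq 2$, and each face has $d \leq 3$, so by invariance the implications ``preimage of vertex/edge/face $\Rightarrow$ $d \leq 1, 2, 3$'' are immediate, and the first part of the $d = 4$ case (the orbit cannot reach any facet, hence stays interior) follows at once. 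The case $d = 1$ reduces to the previous proposition that rational points are preimages of $\mathbf{0}$, proved above by descent on the positive integer entries of the homogeneous representative.

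The main obstacle lies in the converse implications $d = 2 \Rightarrow$ edge preimage and $d = 3 \Rightarrow$ face preimage. My plan is to match the itinerary of $\mathbf{v}$ against the sub-shifts already characterizing edge and face preimages: for $d = 3$, a tail fitting the $\mathsf{AB} \leftrightarrow \mathsf{AC}$ pattern of Figure~\ref{characterization_faces_dynamics}; for $d = 2$, a tail consisting only of $\mathbb{A}_n$ symbols, as in the edge $\mathsf{A}$ characterization. Writing $\mathbf{v} = \mathbf{u}_0 + \alpha \mathbf{u}_1 + \beta \mathbf{u}_2$ in a $\mathbb{Q}$-basis of its span (with as many integer vectors as $d$), each iterate of $G$ multiplies every $\mathbf{u}_i$ by the same matrix in $\mathcal{M}_3$, and the affine inequalities selecting each branch couple the sign and magnitude structures of the $\mathbf{u}_i$. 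The hope, echoing the descent of the $d = 1$ case, is that some combination of the $\mathbf{u}_i$ is eventually forced into the coordinate subspace defining a specific edge (resp.\ face). Producing this forcing is the hard step; even the two-dimensional analogue appears above as an open conjecture, so a new insight into how integer orbits of $\mathcal{M}_3$ are driven into the walls of the simplex will likely be needed.

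For the periodicity clause with $d = 4$, my plan is to mimic the proof that quadratic algebraic numbers are eventually periodic for the classical Gauss map. Assuming $x, y, z$ satisfy a common algebraic relation encoded by an integer symmetric form $\mathbf{Q}$ vanishing on $(x, y, z, 1)^{T}$, each iterate satisfies the analogous equation with form $\mathbf{I}_{\mathbf{v}, k}^{T} \mathbf{Q}\, \mathbf{I}_{\mathbf{v}, k}$. The determinant of this new form is preserved since $\mathbf{I}_{\mathbf{v}, k} \in SL(4, \mathbb{Z})$, and I would then bound the remaining coefficients by approximation estimates of the form $|x - p_{k,i}/q_{k,i}| = O(q_{k,i}^{-m})$, which reduce each coefficient to a fixed linear combination of the entries of $\mathbf{Q}$ up to vanishing error. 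Only finitely many integer forms satisfy such bounds together with the fixed determinant, so the orbit must revisit a previous state. The obstacle here is that the rate of multidimensional approximation is itself an open question in the paper, so the required coefficient bound has to be derived directly from the combinatorics of the branch matrices $\mathbf{A}_n, \mathbf{B}_n, \mathbf{C}_n$ rather than from an a~priori approximation rate.
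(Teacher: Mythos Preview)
The statement you are attempting to prove is labelled a \emph{Conjecture} in the paper, and the paper supplies no proof for it; there is therefore nothing to compare your argument against. What the paper does provide are the partial results you already cite (rational points are preimages of $\mathbf{0}$; the itinerary characterizations of edge and face points), together with the explicit acknowledgement that the remaining implications are open---indeed, as you yourself observe, even the two-dimensional analogue (``$(x,y)$ with $y=\tfrac{p}{q}x+\tfrac{r}{s}$ is eventually mapped to an edge'') is posed as a conjecture rather than a theorem.

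Your outline is a sensible research programme, and you have located the genuine obstructions accurately: the invariance of $d$ under $G$ is the easy part, and the forward implications (preimage of a $k$-facet $\Rightarrow d\le k+1$) follow immediately. The hard direction---forcing an orbit with $d=2$ or $d=3$ onto the appropriate facet---is exactly where the paper stops, and your descent idea via the integer vectors $\mathbf{u}_i$ is the natural thing to try, but no mechanism in the paper guarantees that such a combination is driven to a coordinate wall.

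One additional caution on the periodicity clause: as literally stated, ``$d=4$ and $x,y,z$ rationally independent $\Rightarrow$ eventually periodic'' cannot be correct, since any eventually periodic point has algebraic coordinates, while generic transcendental triples already give $d=4$. The $n$-dimensional version later in the paper adds the hypothesis that the coordinates lie in a common number field of the appropriate degree, and your plan silently inserts an integer form $\mathbf{Q}$ vanishing on $(x,y,z,1)$, which amounts to the same correction. Even with that amendment, your Lagrange-style finiteness argument hinges on an approximation rate for the vertices of $\mathbb{I}_n(\mathbf{v})$ that the paper explicitly flags as an open question, so this step is not merely technical but requires a genuinely new estimate.
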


\begin{question}
 Let $\alpha, \beta$ be quadratic irrationals. Do the following points satisfy the previous conjecture? 
 \[ \Mat{c}{\alpha \\ \beta \\ \alpha^2 \\ 1}, \hspace*{2em} \Mat{c}{\alpha \\ \beta \\ \beta^2 \\ 1}, \hspace*{2em} \Mat{c}{\alpha \\ \beta \\ \alpha - \beta \\ 1}, \hspace*{2em} \Mat{c}{\alpha \\ \beta \\ \alpha \beta \\ 1}.\]
\end{question}

\section{The $n$-dimensional case}

\begin{conjecture}
 There is a cyclic action of the matrix $A$ in the edges of $V$. 
\end{conjecture}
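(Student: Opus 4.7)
The natural interpretation of this statement, based on the $n=2$ and $n=3$ computations worked out above, is that $A$ cyclically permutes the $n$ edges of $V$ incident to the vertex $v_0=(0,\dots,0,1)^{\top}$. Writing $\mathbf{e}_1,\dots,\mathbf{e}_{n+1}$ for the standard basis of $\mathbb{R}^{n+1}$, the columns of $V$ are $v_0=\mathbf{e}_{n+1}$ and $v_k=\mathbf{e}_1+\mathbf{e}_2+\cdots+\mathbf{e}_k+\mathbf{e}_{n+1}$ for $1\le k\le n$; denote $e_{0k}:=\mathrm{span}(v_0,v_k)\subset\RP{n}$. My plan is to show that, for $n\ge 2$, $A$ realises the $n$-cycle $(e_{01}\ e_{02}\ \cdots\ e_{0n})$, which is the content of the conjecture once \emph{edges of $V$} is read as the star of the fixed vertex.

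From the definition of $A$ one reads off its columns: $A\mathbf{e}_1=\mathbf{e}_1+\mathbf{e}_2$, $A\mathbf{e}_j=\mathbf{e}_{j+1}$ for $2\le j\le n-1$, $A\mathbf{e}_n=-(\mathbf{e}_2+\mathbf{e}_3+\cdots+\mathbf{e}_{n+1})$, and $A\mathbf{e}_{n+1}=\mathbf{e}_{n+1}$. In particular $Av_0=v_0$, so the family $\{e_{0k}\}_{k=1}^{n}$ is preserved setwise. For $1\le k\le n-1$, applying $A$ term-by-term and telescoping gives
\[
A v_k \;=\; (\mathbf{e}_1+\mathbf{e}_2)+\mathbf{e}_3+\cdots+\mathbf{e}_{k+1}+\mathbf{e}_{n+1} \;=\; v_{k+1},
\]
so $A e_{0k}=\mathrm{span}(v_0,v_{k+1})=e_{0,k+1}$. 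For $k=n$, the contribution $A\mathbf{e}_n=-(\mathbf{e}_2+\cdots+\mathbf{e}_{n+1})$ cancels every summand except $\mathbf{e}_1$, yielding $A v_n=\mathbf{e}_1$. Projectively, $[1:0:\cdots:0]$ lies on the line $\mathrm{span}(v_0,v_1)=\{[t:0:\cdots:0:s]\}$, so $A e_{0n}=e_{01}$, closing the cycle.

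Rather than a conceptual obstacle, the real issue is interpretive. First, one must work in $\RP{n}$ throughout, since $A v_n=\mathbf{e}_1$ is a point at infinity in the affine chart and only projectively is $\mathrm{span}(v_0,\mathbf{e}_1)$ equal to $e_{01}$. Second, the literal reading that $A$ permutes \emph{all} edges of $V$ is false already at $n=2$: there, $A e_{12}$ is the projective line through $v_2$ and $[1:0:0]$, which meets no other vertex of $V$. Restricting to the star of $v_0$ is therefore not just convenient but necessary, and the cycle length is exactly $n$.
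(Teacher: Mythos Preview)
The paper states this as a bare conjecture in the final section and offers no proof, so there is nothing to compare your argument against. Your computation is correct: from the column description of $A$ one indeed gets $Av_0=v_0$, $Av_k=v_{k+1}$ for $1\le k\le n-1$, and $Av_n=\mathbf{e}_1$, which projectively lies on $\mathrm{span}(v_0,v_1)$; hence $A$ acts on the projective lines $e_{0k}$ as the $n$-cycle $(e_{01}\,e_{02}\,\cdots\,e_{0n})$. Your interpretive caveat is also well taken and worth keeping: the literal claim about \emph{all} edges of $V$ fails already for $n=2$, so the conjecture only makes sense for the star of the fixed vertex $v_0$, and your argument settles it in that form.
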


\begin{conjecture}
 There are $n$ families of domains for the first return map $G$.
\end{conjecture}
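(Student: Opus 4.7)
The plan is to generalize the explicit constructions of the families $\lbrace \mathbf{A}_k \rbrace$, $\lbrace \mathbf{B}_k \rbrace$, $\lbrace \mathbf{C}_k \rbrace$ from the lower-dimensional cases. In dimension 1 there is one family $BA^{k-1}$, in dimension 2 two families $BA^{2k-1}, BA^{2k-2}$, in dimension 3 three families $BA^{3k-1}, BA^{3k-2}, BA^{3k-3}$, which strongly suggests that in dimension $n$ the families should be $\mathbf{X}^{(j)}_k = B A^{nk-j}$ for $j = 1, 2, \ldots, n$ and $k \geq 1$, so that the $n$ residue classes of the non-negative integers modulo $n$ each give one family.

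The crucial step is to establish an $n$-periodic structure for $A$, generalizing the identities $A^{2n}$ and $A^{3n}$ equal to the identity matrix plus the scalar $-n$ in the bottom-left entry. I would start by computing the action of $A$ on the vertices of $V$. In the computed cases the vertex $\mathbf{0}$ is fixed while the remaining $n$ vertices are cyclically shifted forward by $A$, so that $A^n$ returns any chosen vertex to its original position up to a single translation along the ``first coordinate'' direction. An induction on $n$, or a careful block decomposition of $A$ obtained by separating the fixed vertex from the cyclic block on the remaining vertices, should then give the closed form of $A^{nk}$. This is essentially the content of the conjecture immediately preceding this one, on a cyclic action of $A$ on the edges of $V$, so the two conjectures are naturally treated together.

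With the closed form for $A^{nk}$ in hand, the sub-simplexes $\mathbb{X}^{(j)}_k = (\mathbf{X}^{(j)}_k)^{-1} V$ can be written down explicitly as matrices with integer entries that depend affinely on $k$. The remainder of the conjecture then reduces to verifying that these $n$-indexed families partition $V \setminus \lbrace \mathbf{0} \rbrace$ up to shared boundary facets. Geometrically, each family $\lbrace \mathbb{X}^{(j)}_k \rbrace_{k \geq 1}$ should be distinguished by which facet of the base simplex its members touch, so that purely combinatorial information detects the index $j$; the index $k$ then plays the role of a continued-fraction partial quotient, counting the number of iterations of $A$ needed before landing in $B^{-1}V$.

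The main obstacle is controlling the combinatorics of the sub-simplexes in arbitrary dimension and ruling out any coincidences that would collapse two indices $j, j'$ into one, or any further splitting that would produce more than $n$ families. A direct facet-by-facet analysis becomes unmanageable as $n$ grows, so a clean argument likely requires promoting the cyclic $A$-action on the face lattice of $V$ into an inductive device: the boundary structure in dimension $n$ should pull back cleanly to the already-understood structure in dimension $n-1$, yielding both this conjecture and the preceding one as a joint induction on dimension.
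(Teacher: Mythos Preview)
The paper does not prove this statement: it is listed in Section~6 as an open conjecture, with no argument given. There is therefore no paper proof to compare your proposal against. What the paper \emph{does} contain is the explicit verification in dimensions $2$ and $3$ (Sections~4 and~5), where the identities for $A^{2k}$ and $A^{3k}$ are computed directly and the resulting families $\lbrace \mathbf{A}_k \rbrace$, $\lbrace \mathbf{B}_k \rbrace$, $\lbrace \mathbf{C}_k \rbrace$ are written down by hand; the general statement is then extrapolated and left open.

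Your outline correctly identifies the mechanism behind those low-dimensional computations and proposes the natural generalization $\mathbf{X}^{(j)}_k = B A^{nk-j}$. You are also right that this conjecture and the preceding one (the cyclic action of $A$ on the edges of $V$) are really the same observation, and that a closed form for $A^{nk}$ is the key ingredient. Where your proposal remains a sketch rather than a proof is exactly where you say it does: establishing that the $n$ families genuinely tile $V \setminus \lbrace \mathbf{0} \rbrace$ with no overlap and no gaps requires showing that every point of $V$ other than $\mathbf{0}$ reaches $B^{-1}V$ in finitely many steps under $A$, and that the residue class of that hitting time modulo $n$ is well-defined and exhausts all $n$ classes. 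The paper offers no tools for this beyond the explicit low-dimensional pictures, so a complete argument would have to supply the block decomposition of $A$ and the tiling verification from scratch.
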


\begin{conjecture}
 There is a subgroup of $\mathcal{M}_n$ homomorphic to $SL(2,\mathbb{Z})$, generated by the matrices of the corresponding family $\mathbf{A}_n$.
\end{conjecture}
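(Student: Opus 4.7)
The plan is to exhibit an explicit injective homomorphism $\phi\colon SL(2,\mathbb{Z})\to SL(n+1,\mathbb{Z})$ whose image contains every $\mathbf{A}_k$ and whose preimages of those matrices already generate $SL(2,\mathbb{Z})$. Motivated by the suspensions appearing in dimensions $2$ and $3$, I define
\[
\phi\mat{cc}{a & b \\ c & d} = \Mat{ccccc}{a & 0 & \cdots & 0 & b \\ 0 & 1 & \cdots & 0 & 0 \\ \vdots & \vdots & \ddots & \vdots & \vdots \\ 0 & 0 & \cdots & 1 & 0 \\ c & 0 & \cdots & 0 & d},
\]
placing the $2\times 2$ block on the four corners and filling the interior $(n-1)\times(n-1)$ block with the identity. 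Using the block decomposition $1+(n-1)+1$, a direct computation confirms $\phi(M_1 M_2)=\phi(M_1)\phi(M_2)$; the four corner entries alone determine $\phi(M)$, so $\phi$ is also injective.

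The technical core is the lemma $A^n = I - E_{n+1,1}$, where $E_{n+1,1}$ is the matrix unit with a single $1$ at position $(n+1,1)$. I would prove it by tracking the orbits of the standard basis. Reading off the columns gives $Ae_1 = e_1+e_2$, $Ae_j = e_{j+1}$ for $2\le j\le n-1$, $Ae_n = -(e_2+\cdots+e_{n+1})$ and $Ae_{n+1}=e_{n+1}$. A short induction on $j$ establishes $A^j e_1 = e_1+e_2+\cdots+e_{j+1}$ for $1\le j\le n-1$, after which the special rule for $Ae_n$ triggers the telescoping
\[
A^n e_1 = (e_1+e_2)+e_3+\cdots+e_n - (e_2+e_3+\cdots+e_{n+1}) = e_1 - e_{n+1}.
\]
Each middle basis vector $e_m$ ($2\le m\le n$) traces the $n$-cycle $e_m\to e_{m+1}\to\cdots\to e_n\to -(e_2+\cdots+e_{n+1})\to e_2\to\cdots\to e_m$, returning to itself after exactly $n$ steps; $e_{n+1}$ is fixed. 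Collecting columns gives $A^n = I-E_{n+1,1}$, and since $E_{n+1,1}^2=0$ we obtain $A^{nk} = I - kE_{n+1,1}$ for every $k\in\mathbb{Z}$.

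With the lemma in hand, the same orbit data evaluated at step $n-1$ yields $A^{n-1}e_1 = e_1+\cdots+e_n$, $A^{n-1}e_2 = -(e_2+\cdots+e_{n+1})$, $A^{n-1}e_m = e_{m-1}$ for $3\le m\le n$, and $A^{n-1}e_{n+1}=e_{n+1}$. Applying $B$ (whose action on basis vectors is read directly from its columns) and collecting terms produces $\mathbf{A}_1 = BA^{n-1} = \phi\mat{cc}{-1 & 1 \\ 1 & 0}$. Since $\mathbf{A}_k = BA^{nk-1} = (BA^{n-1})\cdot A^{n(k-1)} = \mathbf{A}_1\cdot(I-(k-1)E_{n+1,1})$ and $I-(k-1)E_{n+1,1} = \phi\mat{cc}{1 & 0 \\ 1-k & 1}$, the homomorphism property gives
\[
\mathbf{A}_k = \phi\mat{cc}{-1 & 1 \\ 1 & 0}\,\phi\mat{cc}{1 & 0 \\ 1-k & 1} = \phi\mat{cc}{-k & 1 \\ 1 & 0}.
\]

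The subgroup generated by $\{\mathbf{A}_k:k\ge 1\}$ is therefore the image under $\phi$ of the subgroup of $SL(2,\mathbb{Z})$ generated by $\left\{\mat{cc}{-k & 1 \\ 1 & 0}:k\ge 1\right\}$; but these $2\times 2$ matrices are precisely the one-dimensional $\mathbf{A}_k$ of Section~2, which generate all of $SL(2,\mathbb{Z})$ by the remark following the first Fun Fact. Injectivity of $\phi$ yields the claimed isomorphism. The main obstacle is the combinatorial lemma $A^n = I-E_{n+1,1}$: the exceptional $n$-th column of $A$ (with the $-1$'s) keeps $A$ from acting as a mere permutation of the basis, and one must track the cancellation carefully to see that after $n$ iterations only a single entry at $(n+1,1)$ survives. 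Once this identity is secured, the remainder is formal block-matrix manipulation combined with the one-dimensional generating result.
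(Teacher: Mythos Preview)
The paper offers no proof of this statement: it is listed as a conjecture in Section~6, with only the cases $n=2$ and $n=3$ verified earlier by writing down the suspension $\phi$ and noting (without justification) the closed forms of $A^{2k}$ and $A^{3k}$. Your argument is correct and is precisely the natural completion of the paper's low-dimensional observations. The one genuinely new ingredient you supply is the combinatorial lemma $A^{n}=I-E_{n+1,1}$ for arbitrary $n$, which you obtain by tracking the $A$-orbits of the standard basis vectors; the telescoping in the computation of $A^{n}e_{1}$ and the $n$-cycle traced by each $e_{m}$ ($2\le m\le n$) are handled cleanly, and the edge cases $n=2,3$ agree with the paper's explicit matrices. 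Once the lemma is in hand, the identification $\mathbf{A}_{k}=BA^{nk-1}=\phi\bigl(\begin{smallmatrix}-k&1\\1&0\end{smallmatrix}\bigr)$ and the appeal to the one-dimensional generating fact $\langle\{\mathbf{A}_{k}:k\ge1\}\rangle=SL(2,\mathbb{Z})$ from Section~2 finish the job exactly as the paper's treatment of $n=2,3$ suggests.
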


\begin{question}
 How \emph{big} is $\mathcal{M}_n$ inside $SL(n+1, \mathbb{Z})$? 
\end{question}

\begin{question}
 Are there homomorphisms $\mathcal{M}_{n} \rightarrow \mathcal{M}_{n+1}$ ?
\end{question}

\begin{conjecture}
 Rational points are preimages of zero.
\end{conjecture}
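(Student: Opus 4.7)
The plan is to prove this by induction on the dimension $n$, mirroring the arguments already given in the $1$-, $2$-, and $3$-dimensional cases. The base case $n=1$ has been established in Section 2, so assume the statement holds for all dimensions less than $n$.

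First I would note that a rational point in $V$ has a representative $\mathbf{r} = (p_1, p_2, \ldots, p_{n+1})^t$ with integer entries satisfying the chain of inequalities $0 \leq p_n \leq p_{n-1} \leq \cdots \leq p_1 \leq p_{n+1}$ that define the base simplex $V$ (read off the vertex matrix in the $n$-dimensional M\"onkemeyer construction). Strict inequality throughout is equivalent to $\mathbf{r}$ being an interior point. Since $A, B \in SL(n+1, \mathbb{Z})$, and therefore every product matrix defining a piece of $G$ lies in $SL(n+1, \mathbb{Z})$, the map $G$ preserves integrality of the coordinate representatives.

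Next I would set up the descent. Inspecting the last row of $B$, which is $(1, 0, \ldots, 0)$, we see that $B\mathbf{v}$ has last coordinate equal to the first coordinate of $\mathbf{v}$. The return map is of the form $\mathbf{X}_k = B A^{\text{power}}$, and because $A$ only acts on the first and last blocks of coordinates while preserving the last row convention in the right way, the last coordinate of $G(\mathbf{r})$ equals $p_1$, which, on an interior point, is strictly less than $p_{n+1}$. Hence the last coordinate of $G^k(\mathbf{r})$ is a strictly decreasing sequence of non-negative integers, so long as $G^k(\mathbf{r})$ remains interior. This is the key quantitative ingredient and the step I expect to be the main obstacle: the paper never writes down the piece matrices $\mathbf{X}_k$ explicitly in general $n$, so one must argue structurally from the shape of $A$ and $B$ that every piece of $G$ strictly contracts the last coordinate on interior integer points. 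This should follow from the form $\mathbf{X}_k = B A^{\text{power}}$ together with the block structure of $A$, but writing it cleanly for arbitrary $n$ needs care.

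Once strict descent is in hand, the sequence $G^k(\mathbf{r})$ cannot stay interior forever: some coordinate must eventually become zero, forcing $G^k(\mathbf{r})$ onto a boundary facet of $V$. These facets are themselves images of the lower-dimensional base simplex under coordinate projections, and the restriction of $G$ to such a facet coincides with the $(n{-}1)$-dimensional Gauss map (as was verified explicitly on edges and faces in the $2$- and $3$-dimensional cases; in general this is where one would invoke the earlier conjecture that the $n$-dimensional construction contains the $(n{-}1)$-dimensional one as a subsystem on its facets). By the induction hypothesis applied in that lower-dimensional system, the point $G^k(\mathbf{r})$, which still has rational (equivalently, integer-representable) coordinates, is sent to $\mathbf{0}$ in finitely many further iterations. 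Therefore $\mathbf{r}$ itself is a preimage of $\mathbf{0}$, completing the induction.
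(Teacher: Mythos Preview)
Your approach is essentially the one the paper itself indicates: in the $3$-dimensional section the sketch of proof for the corresponding Fun Fact explicitly says ``the proof for the $2$-dimensional case can be generalized to any dimension,'' and your induction-plus-descent on the last homogeneous coordinate is precisely that generalization. Note, though, that in the $n$-dimensional section the statement is recorded only as a conjecture, with no argument beyond that $3$D sketch; so you are not comparing against a full proof but against an outline, and your write-up is already more detailed than what the paper provides.

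One point to tighten in your inductive step: the restriction of $G$ to a single boundary facet is \emph{not} literally the $(n{-}1)$-dimensional Gauss map. Already in dimension $3$ the paper shows that the face dynamics hop among several faces (the graph in the section on the $3$-dimensional case), and only after that transient do they settle into something identifiable with the $2$-dimensional system. So invoking ``the induction hypothesis applied in that lower-dimensional system'' needs either an identification of the full boundary dynamics with the lower-dimensional $G$ (which is the content of another conjecture in the paper), or---cleaner---you can sidestep this entirely: once on the boundary the point is still an integer vector in $V$, the piece matrices are still in $SL(n{+}1,\mathbb{Z})$, and your same descent on the last coordinate continues to apply, now with one of the intermediate inequalities an equality. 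Iterating forces successive coordinates to collapse until you reach $\mathbf{0}$. This is what the paper's sketch means by ``adapting the result on the previous dimension,'' and it avoids relying on an exact subsystem identification.
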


\begin{conjecture}
 The approximating simplexes are the best approximations.
\end{conjecture}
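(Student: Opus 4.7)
The plan is to extend the algebraic argument used in the $2$-dimensional Best approximations proposition to arbitrary dimension $n$. The statement to prove, in the form familiar from the lower-dimensional cases, is that the $(n+1)$-simplex $\Sigma \subset \mathbb{R}^{n+1}$ with one vertex at the origin and the remaining $n+1$ vertices at the columns of $\mathbb{I}_N(\mathbf{v})$ contains no lattice points other than its vertices. Every approximating simplex has the form $\mathbb{I}_N(\mathbf{v}) = \mathbf{I}_{\mathbf{v},N}\, V$, where $\mathbf{I}_{\mathbf{v},N}$ is a product of inverses of the generators of $G$, hence an element of $\mathcal{M}_n \subseteq SL(n+1,\mathbb{Z})$. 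One auxiliary computation is needed: the base matrix $V$ has $\det V = \pm 1$, which follows by expanding along its first column (whose only non-zero entry is the $1$ in the last row) and noting that the resulting cofactor is unit upper-triangular. Therefore $\mathbb{I}_N(\mathbf{v})$ has integer entries and determinant $\pm 1$, and its inverse also has integer entries.

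With these facts in hand, the proof mirrors the $2$-dimensional argument. Let $v_1, \ldots, v_{n+1} \in \mathbb{Z}^{n+1}$ be the columns of $\mathbb{I}_N(\mathbf{v})$. Any point $\mathbf{w} \in \Sigma$ admits a unique representation as a convex combination $\mathbf{w} = \sum_{i=1}^{n+1} \varepsilon_i v_i$ with $\varepsilon_i \geq 0$ and $\sum_i \varepsilon_i \leq 1$. If $\mathbf{w} \in \mathbb{Z}^{n+1}$, applying $\mathbb{I}_N(\mathbf{v})^{-1}$ (which has integer entries) to $\mathbf{w}$ shows that the column vector with entries $\varepsilon_1, \ldots, \varepsilon_{n+1}$ is itself integral. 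The combined constraints force this vector to be either the zero vector (giving the origin) or a standard basis vector (giving one of the $v_i$), so no other integer points can lie in $\Sigma$.

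The hard part is not the argument above but ensuring that the background machinery generalizes cleanly: one needs to know that the first return map $G$ is well-defined in all dimensions, that each of its defining matrices lies in $SL(n+1,\mathbb{Z})$, and that itineraries uniquely determine non-rational points so that the approximating simplexes are well-defined via $\mathbf{v} = \bigcap_N \mathbb{I}_N(\mathbf{v})$. These are precisely the earlier conjectures in the $n$-dimensional section \textemdash{} the $n$ families of domains, the structure of $\mathcal{M}_n$, uniqueness of itineraries, and so on \textemdash{} and the present statement is effectively contingent upon them. Once that framework is in place, however, the lattice-point property requires only the determinant calculation and the convex-combination argument presented above, without any higher-dimensional analogue of Pick's theorem.
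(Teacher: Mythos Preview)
The paper lists this statement as an open conjecture in the $n$-dimensional section and offers no proof there; the only proved instance is the $2$-dimensional \emph{Best approximations} proposition. Your argument is exactly the natural generalization of that $2$-dimensional proof: the same convex-combination trick, using that $\mathbb{I}_N(\mathbf{v}) = \mathbf{I}_{\mathbf{v},N} V$ lies in $GL(n+1,\mathbb{Z})$ because $\mathbf{I}_{\mathbf{v},N} \in \mathcal{M}_n \subseteq SL(n+1,\mathbb{Z})$ and $\det V = \pm 1$. The determinant computation for $V$ and the integrality of the $\varepsilon_i$ are handled correctly, and the conclusion follows. (By contrast, the paper's $1$-dimensional proof goes through Pick's theorem, which does not generalize; you correctly avoid that route.)

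Your final paragraph is the honest and important caveat: the lattice-point argument itself is dimension-agnostic and airtight, but the \emph{meaning} of ``best approximation'' \textemdash{} that no rational point with smaller common denominator lies closer to $\mathbf{v}$ than the vertices of $\mathbb{I}_N(\mathbf{v})$ \textemdash{} presupposes that the approximating simplexes exist, nest, and shrink to $\mathbf{v}$, which in turn rests on the well-definedness of $G$ and uniqueness of itineraries in dimension $n$. The paper treats those as separate conjectures. So what you have is a complete proof of the lattice-point statement conditional on that framework, which is precisely the content the paper's conjecture is gesturing at.
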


\begin{conjecture}
 There is a graph that describes the dynamics on $k$ dimensional facets and characterizes the itineraries of their preimages.
\end{conjecture}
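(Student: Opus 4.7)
The plan is to proceed by analogy with the face-dynamics result proved in Section 5 for $n=3$. For each $0 \le k \le n-1$ I would construct a directed labeled graph $\mathcal{G}_k$ whose vertices are the $k$-dimensional facets of $V$ (equivalently, the $(k+1)$-element subsets of the vertices of $V$) and whose labeled edges record the action of the defining matrices of $G$. The key observation is that each defining matrix $\mathbf{X}_m$ is a linear bijection $\mathbb{X}_m \to V$ sending vertices to vertices; consequently each $k$-facet $F$ of $V$ is partitioned by the pieces $\mathbb{X}_m$ into $k$-sub-facets, and each such sub-facet is mapped by $G$ onto a well-defined $k$-facet $F'$ of $V$. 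An edge $F \xrightarrow{\mathbb{X}_m} F'$ is placed in $\mathcal{G}_k$ whenever such a sub-facet exists, and these are all the edges.

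For the forward direction I would prove by induction that if $\mathbf{v}$ lies on a $k$-facet $F$, then $G^j(\mathbf{v})$ lies on some $k$-facet for every $j$, so the itinerary of $\mathbf{v}$ is an infinite path in $\mathcal{G}_k$ starting at $F$. This uses only the partition structure together with a uniqueness convention (to be extended carefully from the 2- and 3-dimensional cases so that every boundary point is assigned to a single piece). For the converse I would follow the strategy used in the proof of the face-characterization proposition: given $\mathbf{w}$ whose itinerary is a path in $\mathcal{G}_k$ starting at $F$, an induction along the path shows that every approximating simplex $\mathbb{I}_n(\mathbf{w})$ contains a $k$-face lying in $F$; uniqueness of itineraries (cf.\ \cite{Panti}) together with convexity forces $\bigcap_n \mathbb{I}_n(\mathbf{w})$ to be a single point of $F$.

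Characterization of preimages then follows immediately from the forward direction applied to $G^N(\mathbf{v})$: the itinerary of $\mathbf{v}$ eventually becomes a path in $\mathcal{G}_k$ if and only if some iterate $G^N(\mathbf{v})$ lies on a $k$-facet, i.e., if and only if $\mathbf{v}$ is a preimage of a $k$-facet point. The main obstacle will be the explicit combinatorial description of $\mathcal{G}_k$ in general dimension: determining the edge set requires understanding how the families $\mathbf{A}_m, \mathbf{B}_m, \mathbf{C}_m, \ldots$ permute the vertex labels of their sub-simplices, and verifying in the absence of a closed form for the $n$-dimensional M\"onkemeyer matrices that the boundary conventions extend consistently across all dimensions. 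I expect $\mathcal{G}_k$ to exhibit a recursive structure reflecting the conjectured inclusion $\mathcal{M}_{n-1} \hookrightarrow \mathcal{M}_n$, with the subgraph on facets disjoint from the frontal facet mirroring the analogous graph for the $(n-1)$-dimensional map; this recursion is already visible in the 2- and 3-dimensional cases treated in the previous sections.
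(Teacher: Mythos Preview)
The paper does not prove this statement: it appears in Section~6 as one of a list of open conjectures, with no accompanying argument. So there is no ``paper's own proof'' to compare against. What you have written is a plausible programme modeled on the $n=3$ face-dynamics proposition (the one accompanying Figure~\ref{characterization_faces_dynamics}), and it correctly identifies the two ingredients used there: that the defining matrices send sub-facets to facets (giving the graph edges and the forward direction), and that the nested approximating simplexes all retain a face inside the target facet (giving the converse via convexity and uniqueness of itineraries).

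That said, your outline is a sketch, not a proof, and you have honestly flagged the real obstruction: without a closed form for the $n$-dimensional matrices $\mathbf{X}_m$ you cannot yet verify the key combinatorial claim that a $k$-facet of $V$ meets each piece $\mathbb{X}_m$ in a (possibly empty) $k$-facet of $\mathbb{X}_m$. In the $n=3$ proof this is checked by hand via the explicit matrices $\breve{\mathbf{B}}_{p/q,n}^{-1}$ and $\breve{\mathbf{C}}_{p/q,n}^{-1}$; your inductive scheme would need an analogous explicit computation or a structural lemma about how the M\"onkemeyer matrices act on vertex subsets. The converse direction also hides a subtlety: in the paper's $n=3$ argument one shows not merely that each $\mathbb{I}_n(\mathbf{w})$ has \emph{some} face in $\mathsf{AB}$, but that these faces are nested, so that their intersection is nonempty and the convexity argument applies. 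Your inductive step should make this nesting explicit. None of this is fatal, but until those pieces are supplied the conjecture remains open, as the paper leaves it.
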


\begin{conjecture}
 Consider the vector space over $\mathbb{Q}$ spanned by the coordinates of a point in the projected simplex and let $k$ be the dimension of this space.
 If $k \leq n$ then the point is a preimage of a $(k-1)$-dimensional facet.
 In this case, if the coordinates are in the same number field of degree $k$, then the point is periodic in the $(k-1)$-dimensional dynamics.
 If $k > n$ then the point is interior.
 If $k = n+1$ and the coordinates are in the same number field of degree $n$, then the point is periodic.
\end{conjecture}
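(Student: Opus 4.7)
The conjecture splits into structural claims (which facet, if any, the orbit eventually lies in) and periodicity claims (of higher-dimensional Lagrange type). For the structural part, let $\mathbf{v} = (x_1, \dots, x_n, 1)^T$ be the homogeneous representative and $k = \dim_{\mathbb{Q}} \langle x_1, \dots, x_n, 1 \rangle$. Because $G$ acts piecewise as multiplication by matrices $M \in \mathcal{M}_n \subseteq SL(n+1, \mathbb{Z})$ and each such $M$ is $\mathbb{Q}$-invertible, the $\mathbb{Q}$-span of the entries of $M\mathbf{v}$ equals that of $\mathbf{v}$, so $k$ is an orbit invariant. Also, any $(\ell-1)$-dimensional facet of $V$ has $\ell$ integer vertices, so a point on such a facet has homogeneous coordinates that are a real combination of $\ell$ integer vectors, forcing $\mathbb{Q}$-span dimension at most $\ell$. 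Combining these, the case $k = n+1$ is immediate: the orbit cannot touch a facet of dimension $\leq n-1$, hence stays interior.

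For the case $k \leq n$ I would induct on $k$. The base $k=1$ is the statement that rational points are preimages of zero, proved in the excerpt for $n=1,2,3$ and conjectured for general $n$. For the inductive step, pick a primitive integer relation $a \in \mathbb{Z}^{n+1}$ with $a \cdot \mathbf{v} = 0$; under the dynamics this relation is carried by $M^{-T}$ to another primitive integer relation, so the orbit lies in the rational affine subspace $a^\perp$, whose intersection with $V$ is a lower-dimensional rational polytope with integer boundary vertices. Following the rational-case proofs in Sections 4 and 5, the goal is to find a nonnegative integer-valued functional (analogous to the coordinate height of an integer representative) that strictly decreases under $G$ until the orbit meets the boundary of that polytope; at that moment the orbit lies on a facet of $V$ of dimension $\ell \leq n-1$. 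The restricted dynamics on $\ell$-dimensional facets is itself a lower-dimensional M\"onkemeyer-type Gauss map, as the face sub-shift in the 3D section illustrates, so induction applies and the orbit descends to a facet of the exact predicted dimension $k-1$.

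For the periodicity claims, the easy direction follows exactly as in the 1D Lagrange proof: $G^j(\mathbf{v}) = \mathbf{v}$ makes $\mathbf{v}$ an eigenvector of the integer matrix $\mathbf{I}_{\mathbf{v},j}^{-1}$, whose eigenvalue has degree at most $n+1$ over $\mathbb{Q}$, so all coordinates lie in the number field generated by that eigenvalue. The converse, that coordinates in a common algebraic field of the appropriate degree force periodicity, I would attempt by imitating the 1D Lagrange argument reproduced in the paper: express the algebraic relations satisfied by the coordinates as matrix equations, conjugate by the approximating-simplex matrices $\mathbf{I}_{\mathbf{v},n}$, and bound the coefficients of the conjugated relations via the rate of convergence of the approximating simplexes; this would yield finitely many possible tails $G^j(\mathbf{v})$ and hence eventual periodicity. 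The principal obstacle is precisely this last bound: no quantitative rate of convergence for multidimensional approximating simplexes is available, and even the qualitative question of whether algebraic irrationals have finite upper Lyapunov exponent for $G$ is open, as the introduction notes. In this sense the periodicity half of the conjecture sits very close to Hermite's problem, so by current methods a realistic deliverable is a conditional proof of the periodicity statements assuming a quantitative convergence hypothesis, together with an unconditional proof of the structural statements along the inductive strategy sketched above.
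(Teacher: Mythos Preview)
The paper does not prove this statement; it is explicitly listed as a conjecture in Section~6 among the open questions, so there is no proof in the paper to compare your proposal against.

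Your proposal correctly isolates the easy pieces. The invariance of $k=\dim_{\mathbb{Q}}\langle x_1,\dots,x_n,1\rangle$ under multiplication by matrices in $SL(n+1,\mathbb{Z})$ is sound, and the consequence that $k=n+1$ forces the point to be interior follows immediately, since any proper facet has rational vertices and hence points of $\mathbb{Q}$-span at most $n$. You are also right that the easy direction of the periodicity claim (periodic $\Rightarrow$ coordinates in a common number field of bounded degree) is the eigenvector argument already carried out in the 2-dimensional section.

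The remaining structural claim, that $k\le n$ forces the orbit to reach a $(k-1)$-dimensional facet, is not established by your sketch. You reduce it to finding ``a nonnegative integer-valued functional\dots that strictly decreases under $G$'' on the transported relation vector $a\mapsto M^{-T}a$, but you do not construct one, and there is no obvious candidate: the rational-point descent in the paper works because the \emph{point itself} has an integer representative with decreasing last entry, whereas the relation vector $a$ enjoys no such monotonicity under $M^{-T}$. The paper's own treatment of even the rational case in dimension~$3$ is only a sketch, and the non-rational $k\le n$ case is left entirely as a conjecture (Conjectures in Sections~4 and~5). So this is a genuine gap, not a routine omission.

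For the periodicity converse you correctly diagnose the obstruction: imitating the 1-dimensional Lagrange argument would require a quantitative rate of convergence for the approximating simplexes, which the paper explicitly flags as unknown (see the discussion around Figure~\ref{rate_convergence2} and the final questions). Your conclusion that only a conditional argument is currently realistic is in line with the paper's own position. In summary, your proposal is an accurate map of what is provable and what is open, but it is not a proof of the conjecture---and neither does the paper provide one.
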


\begin{question}
 Can we compute the density function for a $G$-invariant ergodic measure absolutely continuous with respect to Lebesgue measure?
\end{question}

\begin{question}
 Do the Lyapunov Exponents for the n-dimensional $G$ measure the exponential rate of approximation to a non rational point by the approximating simplexes?
\end{question}

\end{document}